\theoremstyle{definition}
\newtheorem{theorem}{Theorem}
\let\emph\textbf
\newaliascnt{remark}{theorem}
\newaliascnt{definition}{theorem}
\newaliascnt{proposition}{theorem}
\newaliascnt{lemma}{theorem}
\newaliascnt{corollary}{theorem}
\newaliascnt{example}{theorem}
\newaliascnt{convention}{theorem}
\newaliascnt{todo}{theorem}
\newtheorem{remark}[remark]{Remark}
\newtheorem{definition}[definition]{Definition}
\newtheorem{lemma}[lemma]{Lemma}
\newtheorem{corollary}[corollary]{Corollary}
\newtheorem{example}[example]{Example}
\newtheorem{convention}[convention]{Convention}
\numberwithin{equation}{section}
\numberwithin{figure}{section}
\numberwithin{table}{section}
\let\c@table\c@figure\makeatother
\numberwithin{theorem}{section}
\numberwithin{remark}{section}
\numberwithin{definition}{section}
\numberwithin{proposition}{section}
\numberwithin{lemma}{section}
\numberwithin{corollary}{section}
\numberwithin{example}{section}
\numberwithin{convention}{section}
\numberwithin{todo}{section}
\setlist{topsep=0.5em, itemsep=0em}
\renewcommand{\arraystretch}{1.3}
\newcommand{\morearraystretch}{\renewcommand{\arraystretch}{1.8}}
\newcommand{\Id}{\operatorname{Id}}
\newcommand{\id}{\mathrm{id}}
\newcommand{\Hom}{\operatorname{Hom}}
\newcommand{\Ker}{\operatorname{Ker}}
\newcommand{\End}{\operatorname{End}}
\newcommand{\Ob}{\operatorname{Ob}}
\newcommand{\cat}[1]{\mathcal{#1}}
\newcommand{\htensor}{\widehat{\otimes}}
\newcommand{\Tw}{\operatorname{Tw}}
\newcommand{\Add}{\operatorname{Add}}
\newcommand{\MC}{\operatorname{MC}}
\newcommand{\MCb}{\overline{\operatorname{MC}}}
\newcommand{\Gtl}{\operatorname{Gtl}}
\newcommand{\D}{\operatorname{D}}
\def\H{\operatorname{H}}
\newcommand{\HH}{\operatorname{HH}}
\newcommand{\HC}{\operatorname{HC}}
\newcommand{\pmat}[1]{\begin{pmatrix} #1 \end{pmatrix}}
\newcommand{\odd}{\operatorname{odd}}
\newcommand{\even}{\operatorname{even}}
\newcommand{\isoto}{\xrightarrow{\sim}}
\newcommand{\verylongisoto}{\xrightarrow{~~~\sim~~~}}
\newcommand{\verylongmapsto}{\xmapsto{~~~\phantom{\sim}~~~}}
\newcommand{\verylongto}{\xrightarrow{~~~\phantom{\sim}~~~}}
\newcommand{\embeds}{\hookrightarrow}
\def\Im{\operatorname{Im}}
\newcommand{\restr}[1]{|_{#1}}
\newcommand{\running}{~|~}
\newcommand{\Res}{\operatorname{Res}}
\newcommand{\opp}{\mathsf{opp}}
\newcommand{\landau}{\mathcal{O}}
\newcommand{\bigmax}{\bm{\mathrm{opt}}}
\newcommand{\projects}{\twoheadrightarrow}
\NewDocumentCommand{\tensor}{t_}
 {%
  \IfBooleanTF{#1}
   {\tensorop}
   {\otimes}%
 }
\NewDocumentCommand{\tensorop}{m}
 {%
  \mathbin{\mathop{\otimes}\displaylimits_{#1}}%
 }
\newcommand{\paperone}{\cite{Paper-I}}
\newcommand{\papertwoB}{\cite{Paper-IIB}}
\title{$ A_∞ $-Deformations and their Derived Categories}
\author{Jasper van de Kreeke}
\date{15 March 2023}
\begin{document}

\maketitle

\begin{abstract}
Handling curved $ A_∞ $-deformations is challenging and defining their derived categories seems impossible. In this paper, we show how to welcome the curvature and build derived categories despite the apparent difficulties. We explicitly describe their $ A_\infty $-structure by a deformed Kadeishvili theorem.

This paper starts with a review of $ A_∞ $-deformations for non-specialists. We explain why $ A_∞ $-deformations require curvature and why curvature supposedly causes problems. We prove that infinitesimally curved $ A_∞ $-deformations are in fact non-problematic and provide the correct definitions for their twisted completion and minimal model.

The classical Kadeishvili theorem collapses for curved deformations. We show how to adapt it by iteratively optimizing the curvature and changing the homological splitting infinitesimally. The final $ A_∞ $-structure on the minimal model is constructed in terms of trees.
\end{abstract}

\newcommand{\papertwoAacknowledgements}{%
\subsection*{Acknowledgements}
This paper is part of the author's PhD thesis supervised by Raf Bocklandt. Crucial insights were gained at the 2020 Summer Camp on Derived Categories, Stability Conditions and Deformations, which the author organized in his garden in Berlin. The author thanks all participants for sharing their knowledge, in particular Severin Barmeier for advice and expertise on deformation theory through the lens of $ L_\infty $-algebras. The author's PhD project was supported by the NWO grant “Algebraic methods and structures in the theory of Frobenius manifolds and their applications” (TOP1.17.012).}

\tableofcontents

\section{Introduction}
$ A_∞ $-categories are a tool to store homotopical information in a category. For instance, they are the standard method of homological mirror symmetry to capture the higher products on Fukaya categories and categories of coherent sheaves. From this background, mathematicians have also introduced the notion of $ A_∞ $-deformations. A prominent example is Seidel's relative Fukaya category \cite{Seidel-relative}, a deformation of the ordinary Fukaya category. As Seidel observed, the relative Fukaya category must be defined as a so-called curved $ A_∞ $-deformation.

Many constructions are available for $ A_∞ $-categories. For instance, an $ A_∞ $-category $ \cat C $ has a twisted completion $ \Tw\cat C $ and minimal model $ \H\cat C $. The derived category of $ \cat C $ is the $ A_∞ $-category $ \H\Tw\cat C $, the minimal model of the twisted completion of $ \cat C $. A crucial necessity for defining derived categories is that the differential of $ \cat C $ squares to zero.

\begin{center}
\begin{tikzpicture}
\path (0, 0) node[align=center] (A) {$ A_∞ $-category $ \cat C $} (9, 0) node[align=center] (B) {Deformed $ A_∞ $-category $ \cat C_q $};
\path (0, -1.5) node[align=center] (C) {Derived category $ \H\Tw\cat C $};
\path[draw, -, dashed] ($ (A.east) + (right:1) $) -- ($ (B.west) + (left:1) $) node[midway, above] {deformation};
\path[draw, <->] (A.south) -- (C.north) node[midway, left] {derived} node[midway, right] {equivalent};
\end{tikzpicture}
\end{center}

At first sight, it may seem impossible to fill this square and define a derived category for $ \cat C_q $ which is a deformation of $ \H\Tw\cat C $. The problem is that the deformed differential of $ \cat C_q $ need not square to zero because of the curvature. In the present paper, we solve the dilemma and show how to correctly define the twisted completion $ \Tw\cat C_q $ and minimal model $ \H\cat C_q $ of any curved $ A_∞ $-deformation. We define the derived category of $ \cat C_q $ as the minimal model of the twisted completion, effectively filling the square:

\begin{center}
\begin{tikzpicture}
\path (0, 0) node[align=center] (A) {$ A_∞ $-category $ \cat C $} (9, 0) node[align=center] (B) {Deformed $ A_∞ $-category $ \cat C_q $};
\path (0, -1.5) node[align=center] (C) {Derived category $ \H\Tw\cat C $} (9, -1.5) node (D) {\textbf{Deformed derived category $ \H\Tw\cat C_q $}};
\path[draw, dashed] ($ (A.east) + (right:1) $) -- ($ (B.west) + (left:1) $) node[midway, above] {deformation};
\path[draw, dashed] ($ (C.east) + (right:1) $) -- ($ (D.west) + (left:1) $) node[midway, above] {deformation};
\path[draw, <->] (A.south) -- (C.north) node[midway, left] {derived} node[midway, right] {equivalent};
\path[draw, <->] (B.south) -- (D.north) node[midway, left] {derived} node[midway, right] {equivalent};
\end{tikzpicture}
\end{center}

The literature includes attempts to avoid this “curvature problem” \cite{Keller-curved}, for instance Lowen and Van den Bergh \cite{Lowen-vdB} investigate how to get rid of curvature by passing to a Morita equivalent model of $ \cat C $, and Positselski \cite{Positselski} shows how to define an alternative notion of derived categories for curved $ A_∞ $-categories. However, the approaches do not lead to a notion of derived category that is a deformation of $ \H\Tw\cat C $.

\subsection*{Context}
To the benefit of the reader, we summarize here the background of $ A_∞ $-theory. We comment on $ A_∞ $-deformations and the problems one encounters when trying to build their derived categories. We also explain how the results of the present paper are used to conduct an explicit minimal model calculation in \papertwoB.

\paragraph*{$ A_∞ $-categories and their deformations}
$ A_∞ $-categories are a type of category used to store homotopical information. In comparison with the better-known dg categories, they allow for more flexibility. While dg categories come with only two operations, the differential and the composition, an $ A_∞ $-category allows room for higher products $ μ^3, μ^4, … $. The flexibility entails that many different $ A_∞ $-categories are equivalent to a given one, varying in size and amount of nonzero higher products.

Deformation theory is a means of capturing all ways to algebraically or geometrically deform objects. The language of $ A_∞ $-categories makes it possible to define immediately an algebraic deformation theory over a given local ring $ B $. The idea is to look for all ways to deform the $ A_∞ $-products $ μ^1, μ^2, … $ of $ \cat C $ and allow additional curvature $ μ^0 $ in such a way that the products satisfy the “curved $ A_∞ $-relations” and reduce to the $ A_∞ $-structure of $ \cat C $ once the maximal ideal of $ B $ is divided out.

\paragraph*{Minimal models and derived categories}
Given an $ A_∞ $-category $ \cat C $ and an equivalent $ A_∞ $-category $ \cat D $, one also says that $ \cat D $ is a different model for $ \cat C $. Switching to a different model of a given $ A_∞ $-category is often tedious because it requires computing the higher products on the model. The Kadeishvili construction is an explicit method to construct these higher products. The idea is originally due to Kadeishvili, but has been worked out explicitly in \cite[Chapter 6, 3.3.2]{Kontsevich-Soibelman}.

Minimal models are a way to decrease the size of the hom spaces of an $ A_∞ $-category. An $ A_∞ $-category is called minimal if its differential $ μ^1 $ vanishes. A minimal model for $ \cat C $ is by definition any $ A_∞ $-category $ \H\cat C $ which is quasi-isomorphic to $ \cat C $ and whose differential $ μ^1_{\H\cat C} $ vanishes. Thanks to the Kadeishvili construction, every $ A_∞ $-category clearly has a minimal model: We take the cohomology of all the hom spaces and equip them with the $ A_∞ $-structure produced by the Kadeishvili construction.

Twisted completion is a way to form an $ A_∞ $-category of formal chain complexes of objects. The idea is to include arbitrary formal sums of shifted objects together with an upper-triangular matrix which represents the differential. Derived categories of $ A_∞ $-categories are defined by combining the twisted completion and minimal model construction. More precisely, the derived category of $ \cat C $ is defined as the $ A_∞ $-category $ \H\Tw\cat C $.

\paragraph*{The curvature problem and derived categories}
In formal $ A_∞ $-deformation theory, one regards infinitesimal deformations of a given $ A_∞ $-structure such that the $ A_∞ $-relations are preserved. An issue is that only deforming the $ A_∞ $-structure does not give a notion of deformations that is invariant under quasi-equivalence of $ A_∞ $-categories. In order to obtain a notion invariant under quasi-equivalence, one needs to permit the deformation to have curvature. More precisely, the curvature must be infinitesimal in the sense that it lies in a multiple of the maximal ideal of the local ring. Infinitesimal curvature is inevitable for a good notion of $ A_∞ $-deformations.

Dealing with curvature is however regarded as tedious, because the curvature prevents the differential from squaring to zero. The presence of curvature is often referred to as the “curvature problem”. A main question is how to gauge away the curvature or otherwise how to deal with the remaining curvature. An instance of the uncurving problem has been studied by Lowen and Van den Bergh \cite{Lowen-vdB}.

Unfortunately the deformation makes it difficult to speak about twisted completions, minimal models and derived categories: Twisted complexes are classically assumed to satisfy the Maurer-Cartan equation, a condition which ensures that the category $ \Tw\cat C $ becomes curvature-free. Minimal models require that the differential squares to zero in order to be able to take cohomology. Derived categories require both the twisted complex and minimal model construction to be well-defined.

\paragraph*{Application of the Kadeishvili construction} In \papertwoB, we work out an explicit example where the deformed $ A_∞ $-category is a deformed gentle algebra $ \cat C_q = \Tw\Gtl_q Q $. We provide an explicit matching of “result components” coming from the minimal model computation for $ \H\cat C_q $ and relative Fukaya disks of the surface $ |Q| $. This entails working out the meaning of all complicated Kadeishvili trees. This way, we prove that $ \H\Tw\Gtl_q Q $ agrees with the relative Fukaya category of $ |Q| $ with respect to the set of punctures $ Q_0 $.

\begin{center}
\begingroup
\newcommand{\hexagondraw}{\path[draw, gray] (0, 0) -- ++(up:1) coordinate[midway] (A) coordinate[pos=0.7] (Ai) -- ++(330:1) coordinate[midway] (B) coordinate[pos=0.3] (Bi) ++(60:0.2) -- ++(150:1) coordinate[midway] (C) coordinate[pos=0.7] (Ci) -- ++(30:1) coordinate[midway] (D) coordinate[pos=0.3] (Di) ++(120:0.2) -- ++(210:1) coordinate[midway] (E) coordinate[pos=0.7] (Ei) -- ++(up:1) coordinate[midway] (F) coordinate[pos=0.3] (Fi) ++(left:0.2) -- ++(down:1) coordinate[midway] (G) coordinate[pos=0.7] (Gi) -- ++(150:1) coordinate[midway] (H) coordinate[pos=0.3] (Hi) ++(240:0.2) -- ++(330:1) coordinate[midway] (I) coordinate[pos=0.7] (Ii) -- ++(210:1) coordinate[midway] (J) coordinate[pos=0.3] (Ji) ++(300:0.2) -- ++(30:1) coordinate[midway] (K) coordinate[pos=0.7] (Ki) -- ++(down:1) coordinate[midway] (L) coordinate[pos=0.3] (Li);
\path ($ (F)!0.5!(G) $) coordinate (1);
\path ($ (D)!0.5!(E) $) coordinate (2);
\path ($ (B)!0.5!(C) $) coordinate (3);
\path ($ (L)!0.5!(A) $) coordinate (4);
\path ($ (J)!0.5!(K) $) coordinate (5);
\path ($ (H)!0.5!(I) $) coordinate (6);}

\begin{tikzpicture}[scale=1.5]
\hexagondraw
\path (1) -- (6) coordinate[pos=0.4] (out);
\path (out) node[above left] {\small out};
\foreach \i in {1, 2, 3, 4, 5, 6, out} \path[fill] (\i) circle[radius=0.03];
\path (1) node[above] {$ h_1 $};
\path (2) node[below right] {$ h_2 $};
\path (3) node[below] {$ h_3 $};
\path (4) node[below] {$ h_4 $};
\path (5) node[below] {$ h_5 $};
\path (6) node[below left] {$ h_6 $};
\path[draw, semithick, -{To[scale=1.5]}] (1) -- (2);
\path[draw, semithick] (2) -- (3);
\path[draw, semithick] (3) -- (4);
\path[draw, semithick] (4) -- (5);
\path[draw, semithick] (5) -- (6);
\path[draw, semithick, {To[scale=1.5]}-] (6) -- (out) to[bend right] (1);
\path (1) -- (2); 
\path (2) -- (3); 
\path (3) -- (4); 
\path (4) -- (5); 
\path (5) -- (6); 
\path (6) -- (out); 
\path[draw, <->] (1.5, 1.2) -- (3.5, 1.2) node[midway, above] {\papertwoB};
\begin{scope}[shift={(4, 2.2)}]
\path node (A) {$ h_6 $} node[right of=A] (B) {$ h_5 $} node[right of=B] (C) {$ h_4 $} 
node[right of=C] (D) {$ h_3 $} node[right of=D] (E) {$ h_2 $} node[right of=E] (F) {$ h_1 $}
node[below right of=B] (G) {$ h_q μ^2_q $} edge (B) edge (C)
node[below right of=G] (H) {$ h_q μ^2_q $} edge (G) edge (D)
node[below right of=H] (I) {$ h_q μ^2_q $} edge (H) edge (E)
node[below left of=I] (J) {$ h_q μ^2_q $} edge (I) edge (A)
node[below right of=J] (K) {$ π_q μ^2_q $} edge (J) edge (F);
\end{scope}
\end{tikzpicture}
\endgroup

\end{center}

\subsection*{Results}
The aim of the present paper is to show that deformations of $ A_∞ $-categories have derived categories. The first step is to define the twisted completion and minimal model of deformed $ A_∞ $-categories in an abstract way. This abstract method entails first forming the twisted completion or minimal model of the $ A_∞ $-category without deformation and afterwards pushing the deformation onto it. The second step is to describe twisted completion and minimal model explicitly. For the twisted completion, the construction is tricky but straight-forward. For the minimal model, the construction is a fine-tuned variant of the Kadeishvili construction.

\begin{center}
\begin{tabular}{@{}ccc@{}}
\morearraystretch
& \textbf{Original} & \textbf{Deformed} \\\hline
Category & $ \cat C $ & $ \cat C_q $ \\
Twisted completion & $ \Tw\cat C $ & $ \Tw\cat C_q $ \\
Minimal model & $ \H\cat C $ & $ \H\cat C_q $ \\
Derived category & $ \H\Tw\cat C $ & $ \H\Tw\cat C_q $
\end{tabular}
\end{center}

\paragraph*{Abstract definition for twisted completion}
The most conceptual way to define $ \Tw\cat C_q $ is to take the twisted completion $ \Tw\cat C $ and induce the deformation afterwards. More precisely, the first step is to form the twisted completion $ \Tw\cat C $ and regard the inclusion $ i: \cat C → \Tw\cat C $. The second step is to observe the existence of a noncanonical $ L_∞ $-isomorphism of Hochschild DGLAs $ i_*: \HC(\cat C) → \HC(\Tw\cat C) $ since $ i $ is a derived equivalence. The deformation $ \cat C_q $ is a Maurer-Cartan element of $ \HC(\cat C) $ over the given local ring. The third step is to push the Maurer-Cartan element via $ i_* $ and obtain a Maurer-Cartan element of $ \HC(\Tw\cat C) $. This is the desired deformation $ \Tw\cat C_q $.

\paragraph*{Explicit construction for twisted completion}
The explicit way to define $ \Tw\cat C_q $ is by gathering the same objects as $ \Tw\cat C $, namely those twisted complexes satisfying the non-deformed Maurer-Cartan equation $ μ^1 (δ) + μ^2 (δ, δ) + … = 0 $. This is not the same as gathering twisted complexes with twisted differential $ δ $ satisfying the deformed Maurer-Cartan equation $ μ^0_q + μ^1_q (δ) + … = 0 $. In contrast to $ \Tw\cat C $, the category $ \Tw\cat C_q $ inherits infinitesimal curvature, stemming precisely from the infinitesimal failure of the twisted differentials to satisfy the deformed Maurer-Cartan equation.

\paragraph*{Abstract definition for the minimal model}
The most conceptual way to define $ \H\cat C_q $ consists of taking any minimal model $ \H\cat C $ and inducing the deformation $ \cat C_q $ onto $ \H\cat C $ via any quasi-isomorphism $ π: \cat C → \H\cat C $. This abstract approach means that the “minimal model” $ \H\cat C_q $ may have infinitesimal curvature as well as a residual infinitesimal differential. This is not the same as taking cohomology of the hom complexes $ (\Hom_{\cat C_q} (X, Y), μ^1_q) $. In fact, the deformed differential of an $ A_∞ $-deformation need not even square to zero because of the curvature.

\paragraph*{Explicit construction for the minimal model}
Minimal models of $ A_∞ $-categories can classically be computed by means of homological splittings and Kadeishvili trees. In our deformed Kadeishvili theorem, we show that this method carries over to the deformed case. The starting point is an $ A_∞ $-category $ \cat C $ together with an deformation $ \cat C_q $. The difficulties encountered in constructing the minimal model are the presence of curvature $ μ^0_q $, the fact that the deformed differential $ μ^1_q $ does not square to zero and the fact that $ μ^1_q $ is not compatible with the homological splitting chosen for $ \cat C $. In \autoref{sec:kadeishvili}, we show how to adapt the Kadeishvili construction to these special circumstances.

Our construction can be summarized as follows:
\begin{enumerate}
\item Choose a homological splitting for $ \cat C $.
\item Perform an infinitesimal change on the homological splitting in order to adapt it to $ μ^1_q $.
\item Gauge away part of the deformation's curvature.
\item Repeat steps 2 and 3 indefinitely. Take the limit of this process.
\item Calculate the deformed codifferential $ h_q $ and projection $ π_q $.
\item Define the structure of $ \H\cat C_q $ by sums over deformed Kadeishvili trees.
\end{enumerate}

\subsection*{Relation to the literature}
The theory of $ A_∞ $-deformations presented in \autoref{sec:prelim-ainfty} and \ref{sec:lens} is known to experts. In the literature the topic is however under-represented, to the extent that it is hard to pinpoint a single location where $ A_∞ $-deformations are defined. In what follows, we summarize three articles which gave rise to the understanding of the theory presented in this paper.

\paragraph*{Barmeier and Wang}
In \cite{Barmeier-Wang}, Barmeier and Wang prove the power of $ L_∞ $-morphisms in deformation theory of ordinary algebras. They depart from a quiver algebra with relations $ A = ℂQ / I $, where the ideal $ I $ is supposed to come from a reduction system. In order to classify all deformations of $ A $, they replace the Hochschild DGLA $ \HC(A) $ by a quasi-isomorphic $ L_∞ $-algebra $ L(A) $.

To define the Hochschild DGLA, we typically use the bar resolution. Barmeier and Wang demonstrate that this resolution can be substituted by any other one, in particular the very simple bimodule resolution $ P_{\bullet} $ of Chouhy-Solotar, see \cite[Section 4.2]{Barmeier-Wang}. The cochain map between resolutions immediately gives a quasi-isomorphism of complexes
\begin{equation}
\label{eq:literature-barmeier-qi}
\Hom_{A ¤ A^{\opp}} \left(\bigoplus\limits_{i ∈ ℕ} A ¤ A^{¤_i} ¤ A, A\right) \isoto \Hom_{A ¤ A^{\opp}} (P_{\bullet}, A).
\end{equation}
The left-hand side already being a DGLA, the $ L_∞ $-structure transfer theorem induces an $ L_∞ $-structure on the right-hand side such that \eqref{eq:literature-barmeier-qi} becomes a quasi-isomorphism of $ L_∞ $-algebras. Barmeier and Wang then compute part of the $ L_∞ $-structure on the right-hand side, just enough to classify all of its Maurer-Cartan elements. All deformations of $ A $ can then be written down explicitly.

Barmeier and Wang's realization that $ L_∞ $-quasi-isomorphisms transport Maurer-Cartan elements has greatly helped us shape the curved $ A_∞ $-deformation theory of \autoref{sec:lens}. There, we make constant use of the fact that quasi-equivalences of $ A_∞ $-categories induces $ L_∞ $-quasi-isomorphisms of their Hochschild DGLAs:
\begin{equation*}
F: \cat C \isoto \cat D \quad \rightsquigarrow \quad F_*: \HC(\cat C) \isoto \HC(\cat D).
\end{equation*}
Correspondingly, their sets of Maurer-Cartan elements over any deformation base $ B $ match:
\begin{equation*}
\MCb(\HC(\cat C), B) \isoto \MCb(\HC(\cat D), B).
\end{equation*}
This is the main principle that lets us push deformations to and fro between different categories. For example, it lets us seamlessly reduce an $ A_∞ $-category to a skeleton of non-isomorphic objects without changing its deformation theory. It enable us to prove that if an object $ X ∈ \cat C_q $ is uncurvable, then all objects $ Y ∈ \cat C_q $ quasi-isomorphic to $ X $ are uncurvable as well.

\paragraph*{Keller}
In \cite{Keller}, Keller proves the $ L_∞ $-invariance of the Hochschild DGLA under Morita equivalence of dg-categories. Keller actually regards the Hochschild DGLA as a $ B_∞ $-algebra which is even stronger than the notion of $ L_∞ $-algebra.

Let $ A $ and $ B $ be Morita equivalent dg algebras, with Morita equivalence provided by the $ A $-$ B $-bimodule $ M $. Then Keller's core argument for invariance is as follows: Embed $ A $ and $ B $ into the triangular dg algebra
\begin{equation*}
D ≔ \begin{pmatrix} A & M \\ 0 & B \end{pmatrix}.
\end{equation*}
Both embeddings $ A, B ⊂ D $ turn out to be Morita equivalences. Keller exploits the natural restriction maps $ \HC(D) → \HC(A) $ and $ \HC(D) → \HC(B) $, which automatically respect the $ B_∞ $-structure. Departing from the knowledge that Hochschild cohomology of dg algebras is invariant under Morita equivalences as graded vector space, Keller concludes that both $ \HC(D) → \HC(A) $ and $ \HC(D) → \HC(B) $ are $ B_∞ $-quasi-isomorphisms. Correspondingly, $ \HC(A) $ and $ \HC(B) $ are $ B_∞ $-quasi-isomorphic.

Unfortunately, Keller's result is currently restricted to the case dg algebras. To the present paper, this means that we simply assume as axioms that the theory extends to all $ A_∞ $-categories.
%
The heavy use of $ A_∞ $-deformations in our paper shows how imperative it is to rigorously extend Keller's paper to the $ A_∞ $-context. According to private communication with Keller, these results can likely be obtained by the same triangular construction together with appropriate $ A_∞ $-bimodule theory.

\paragraph*{Lowen and Van den Bergh}
In \cite{Lowen-vdB}, Lowen and Van den Bergh explain how to remove curvature from $ A_∞ $-deformations of dg categories. They depart from a dg algebra $ A $ together with an infinitesimally curved $ A_∞ $-deformation $ A_q $ over $ ℂ⟦q⟧ $. Lowen and Van den Bergh observe that a category $ \Tw(A_q) $ of twisted complexes over $ A_q $ can be formed even with infinitesimal entries below the diagonal, just as in our \autoref{th:twisted-deforming-delta}. Interpret $ A_q $ as an $ A_∞ $-deformation with a single object. Then the core observation of Lowen and Van den Bergh is that the following twisted complex has vanishing curvature:
\begin{equation}
\label{eq:literature-LvdB-trick}
X ≔ \left(A ⊕ A[1], \begin{pmatrix} 0 & μ^0_q / q \\ q \id_A & 0 \end{pmatrix}\right) ∈ \Tw(A_q).
\end{equation}
This means that $ C_q ≔ \End_{\Tw(A_q)} (X, X) $ is a curvature-free deformed $ A_∞ $-algebra. It is interesting to note that its special fiber $ C $ at $ q = 0 $ is actually a dg algebra. Indeed, the higher products $ μ^{≥3} $ on $ C_q $ are given by embracing $ μ_{A_q} $ with the matrix entries $ μ^0_q /q $ and $ q \id_A $:
\begin{equation*}
μ^{k≥3}_{C_q} (a_k, …, a_1) = \sum μ^{≥3}_{\Add A_q} (δ, …, δ, a_k, …, δ, …, δ, a_1, δ, …, δ).
\end{equation*}
Restricting this sum to $ q = 0 $ yields only higher products $ μ^{≥3} $ of $ A $. Since $ A $ is a dg algebra, we deduce $ μ^{k≥3}_C = 0 $. In conclusion, $ C $ is a dg algebra as well and $ C_q $ is a curvature-free $ A_∞ $-deformation of $ C $.

Lowen and Van den Bergh prove that $ A $ and $ C $ are in fact related by Morita equivalence. This costs substantial effort and uses the assumption that the curvature $ μ^0_{A_q} $ is nilpotent in the cohomology of $ A $. The result is however that $ A $ and $ C $ are Morita equivalent, and moreover that $ C_q $ is the deformation of $ C $ corresponding to the deformation $ A_q $ of $ A $ along this Morita equivalence:
\begin{center}
\begin{tikzpicture}
\path (0, 0) node {dg algebra A} (3, 0) node {\Large $ \rightsquigarrow $} (7, 0) node {dg algebra $ C $};
\path (0, -0.5) node {curved $ A_∞ $-deformation $ A_q $} (3, -0.5) node {\Large $ \rightsquigarrow $} (7, -0.5) node {uncurved $ A_∞ $-deformation $ C_q $};
\end{tikzpicture}
\end{center}

The work of Lowen and Van den Bergh illustrates that curvature is essential in the notion of $ A_∞ $-deformations, but not an invariant on its own. Curvature can sometimes be removed by changing to a derived equivalent $ A_∞ $-category. Our uncurving construction for band objects in \papertwoB\ provides an example where a mere gauge transformation suffices to remove curvature.



Uncurving is a crucial contribution to our deformed Kadeishvili construction. The presence of curvature namely allows the differential of an $ A_∞ $-deformation to be behave in an arbitrary way. In contrast to the classical Kadeishvili construction, the differential fails to structure the hom spaces into cohomology, image and remainder term. By iteratively gauging the deformation, we optimizing the curvature value until in the limit the differential structures the hom spaces sufficiently. Only after this uncurving procedure can the deformed minimal model be read off from Kadeishvili trees.

As aftermath of our paper, we conclude that there is theoretically no hindrance to forming a derived category of a curved $ A_∞ $-deformation: By \autoref{sec:constructions-derived}, a derived category $ \H\Tw\cat C_q $ exists even for infinitesimally curved deformations. The statement of Lowen and Van den Bergh that a curved $ A_∞ $-deformation has no classical derived category remains true, but our paper contends that the study of deformations profits greatly from permitting also these “non-classical derived categories” $ \H\Tw\cat C_q $.


\subsection*{Structure of the paper}
In \autoref{sec:ainfty}, we recall $ A_∞ $-categories. In \autoref{sec:prelim-ainfty}, we recall $ A_∞ $-deformations, their twisted completion and functors. In \autoref{sec:lens}, we interpret $ A_∞ $-deformations via deformation theory and DGLAs. In \autoref{sec:constructions}, we explain the abstract definitions of twisted completion and minimal models and comment on uncurving theory. In \autoref{sec:kadeishvili}, we construct our deformed Kadeishvili theorem.

For the purposes of \autoref{sec:lens}, \ref{sec:constructions-minmodel} and \ref{sec:uncurving-theory}, we have to assume axioms on the invariance of the Hochschild DGLA under $ A_∞ $-quasi-equivalences. We summarize these axioms in \autoref{th:curved-axioms-convention}.

\papertwoAacknowledgements

\section{Preliminaries on $ A_∞ $-categories}
\label{sec:ainfty}
The notion of $ A_∞ $-category is now widely used as a homological-algebraic tool to study symplectic and algebraic geometry. Its relation to dg categories, triangulated categories and $ ∞ $-categories can be described as follows:
\begin{itemize}
\item DG categories are more rigid than $ A_∞ $-categories. DG quasi-isomorphisms cannot be quasi-inverted on the dg level, while they always have a quasi-inverse if one interprets the dg structures as $ A_∞ $-categories. In particular, zigzags of dg quasi-isomorphisms can be resolved into a single $ A_∞ $-quasi-isomorphism. DG structures are very easy to work with, for instance homotopy colimits are easy to calculate due to the well-known Tabuada model structure \cite{Tabuada, Keller-onDG}.
\item Triangulated categories are weaker than $ A_∞ $-categories. They do not remember any of the “higher structure” that $ A_∞ $-categories contain. The higher structure on an $ A_∞ $-category makes the entire category reconstructible from a subcategory of generators. There is an $ A_∞ $ notion of derived category $ \D\cat C ≔ \H\Tw\cat C $, whose “lower structure” $ (\D \cat C)^0 $ is a triangulated category \cite{Kontsevich-ICM}.
\item Stable $ ∞ $-categories also remember the homotopy information, similar to $ A_∞ $-categories. A priori they lack the linearity over a base field, which can be added afterwards. In fact, there is a direct correspondence between $ k $-linear stable $ ∞ $-categories and $ A_∞ $-categories over $ k $, given in one direction by taking the derived category \cite{Cohn, Faonte}.
\end{itemize}

We work over an algebraically closed field of characteristic zero and always write $ ℂ $. Let us now recall the definition of $ A_∞ $-categories:

\begin{definition}
\label{def:ainfty-def}
A ($ ℤ $- or $ ℤ/2ℤ $-graded, strictly unital) \emph{$ A_∞ $-category} $ \cat C $ consists of a collection of objects together with $ ℤ $- or $ ℤ/2ℤ $-graded hom spaces $ \Hom(X, Y) $, distinguished identity morphisms $ \id_X ∈ \Hom^0 (X, X) $ for all $ X ∈ \cat C $, together with multilinear higher products
\begin{equation*}
μ^k: \Hom(X_k, X_{k+1}) × … × \Hom(X_1, X_2) → \Hom(X_1, X_{k+1}), \quad k ≥ 1
\end{equation*}
of degree $ 2-k $ such that the $ A_∞ $-relations and strict unitality axioms hold: For all compatible morphisms $ a_1, …, a_k $ we have
\begin{align*}
& \sum_{0 ≤ n < m ≤ k} (-1)^{‖a_n‖ + … + ‖a_1‖} μ(a_k, …, μ(a_m, …, a_{n+1}), a_n, …, a_1) = 0, \\
& μ^2 (a, \id_X) = a, ~ μ^2 (\id_Y, a) = (-1)^{|a|} a, ~ μ^{≥3} (…, \id_X, …) = 0.
\end{align*}
\end{definition}

\begin{remark}
Let us write down the first few $ A_∞ $-relations. In particular, we note that if $ μ^1 $ or $ μ^3 $ vanishes then $ μ^2 $ is graded associative:
\begin{align*}
μ^1 (μ^1 (a)) &= 0, \\
(-1)^{‖b‖} μ^2 (μ^1 (a), b) + μ^2 (a, μ^1 (b)) + μ^1 (μ^2 (a, b)) &= 0, \\
(-1)^{‖c‖} μ^2 (μ^2 (a, b), c) + μ^2 (a, μ^2 (b, c)) + μ^1 (μ^3 (a, b, c)) \phantom{XYZ} & \\
+ (-1)^{‖b‖ + ‖c‖} μ^3 (μ^1 (a), b, c) + (-1)^{‖c‖} μ^3 (a, μ^1 (b), c) + μ^3 (a, b, μ^1 (c)) &= 0.
\end{align*}
\end{remark}

Recall that a functor $ F: \cat C → \cat D $ of $ A_∞ $-categories is a map which intertwines the products of $ \cat C $ and $ \cat D $:

\begin{definition}
\label{def:prelim-ainfty-functor}
Let $ \cat C $ and $ \cat D $ be $ A_∞ $-categories. A \emph{functor} $ F: \cat C → \cat D $ consists of a map $ F: \Ob(\cat C) → \Ob(\cat D) $ together with for every $ k ≥ 1 $ a degree $ 1-k $ multilinear map
\begin{equation*}
F^k: \Hom_{\cat C} (X_k, X_{k+1}) ¤ … ¤ \Hom_{\cat C} (X_1, X_2) → \Hom_{\cat C} (FX_1, FX_{k+1})
\end{equation*}
such that the $ A_∞ $-functor relations hold:
\begin{multline*}
\sum_{0 ≤ j < i ≤ k} (-1)^{‖a_j‖ + … + ‖a_1‖} F(a_k, …, a_{i+1}, μ(a_i, …, a_{j+1}), a_j, …, a_1) \\
= \sum_{\substack{l ≥ 0 \\ 1 = j_1 < … < j_l ≤ k}} μ(F(a_k, …, a_{j_l}), …, F(…, a_{j_2}), F(…, a_{j_1})).
\end{multline*}
When $ F: \cat C → \cat D $ and $ G: \cat D → \cat E $ are $ A_∞ $-functors, then their composition $ GF $ is given on objects by $ G∘F: \Ob(\cat C) → \Ob(\cat E) $ and on morphisms by
\begin{equation*}
(GF)(a_k, …, a_1) = \sum G(F(a_k, …), …, F(…, a_1)).
\end{equation*}
\end{definition}

There are several notions for functors between $ A_∞ $-categories to be equivalences, which we recall as follows:

\begin{definition}
\label{def:prelim-ainfty-functorquasi}
Let $ \cat C $ and $ \cat D $ be two $ A_∞ $-categories and $ F: \cat C → \cat D $ be a functor. Then $ F $ is
\begin{itemize}
\item an \emph{isomorphism} if it is an isomorphism on object level and $ F^1: \Hom(X, Y) → \Hom(FX, FY) $ is an isomorphism for every $ X, Y ∈ \cat C $,
\item a \emph{quasi-isomorphism} if the induced functor $ \H F: \H\cat C → \H\cat D $ is an isomorphism,
\item a \emph{quasi-equivalence} if $ (\H F)^1: \Hom_{\H\cat C} (X, Y) → \Hom_{\H\cat D} (FX, FY) $ is an isomorphism for every $ X, Y ∈ \cat C $ and if $ \H F $ reaches every object in $ \cat D $ up to quasi-isomorphism,
\item a \emph{derived equivalence} if the induced functor $ \H\Tw F: \H\Tw \cat C → \H\Tw \cat D $ is an equivalence.
\end{itemize}
\end{definition}

There are several notions for $ A_∞ $-categories to be equivalent, without explicit reference to a functor:

\begin{definition}
\label{def:prelim-ainfty-catquasi}
Let $ \cat C $ and $ \cat D $ be two $ A_∞ $-categories. Then $ \cat C $ and $ \cat D $ are
\begin{itemize}
\item \emph{isomorphic} if there exists an isomorphism $ F: \cat C → \cat D $,
\item \emph{quasi-isomorphic} if there exists a quasi-isomorphism $ F: \cat C → \cat D $,
\item \emph{quasi-equivalent} if there exists a quasi-equivalence $ F: \cat C → \cat D $,
\item \emph{derived equivalent} if $ \Tw\cat C $ and $ \Tw\cat D $ are quasi-equivalent.
\end{itemize}
\end{definition}

\begin{remark}
No functor in either direction is required for two $ A_∞ $-categories to be derived equivalent. The contrast with quasi-isomorphisms is due to the lack of functors $ \Tw\cat C → \cat C $, in contrast to the existence of natural functors $ \H\cat C → \cat C $ and $ \cat C → \H\cat C $. There are many more equivalent and equally esthetic ways to define every of the above notions, so we have only presented a selection.
\end{remark}

Let us recall minimal models of $ A_∞ $-categories. These models are called minimal because the differential on all hom spaces vanish. In \autoref{sec:kadeishvili}, we comment extensively on minimal models. Minimal models of $ A_∞ $-category are not uniquely defined. Instead we fix the following parlance:

\begin{definition}
An $ A_∞ $-category is \emph{minimal} if its differential vanishes. Let $ \cat C $ be an $ A_∞ $-category. Then a \emph{minimal model} for $ \cat C $ is any minimal $ A_∞ $-category $ \H\cat C $ such that $ \cat C $ and $ \H\cat C $ are quasi-isomorphic.
\end{definition}

If $ \cat C $ is a minimal $ A_∞ $-category, let us regard its degree-zero part $ \cat C^0 $. It has the same objects as $ \cat C $ but only including the degree-zero part of the hom spaces. Then the product composition $ a ∘ b ≔ (-1)^{|b|} μ^2 (a, b) $ is associative on $ \cat C^0 $, rendering $ \cat C^0 $ an ordinary $ ℂ $-linear category. Two objects $ X, Y ∈ \cat C $ are \emph{quasi-isomorphic} if they are isomorphic in $ (\H\cat C)^0 $.

As an auxiliary construction, we recall the additive completion of $ A_∞ $-categories. The idea is to form a category whose objects are formal sums of objects. The hom spaces and $ A_∞ $-products are extended accordingly. We fix the definition and notation as follows:

\begin{definition}
\label{def:2Ainfty-ainfty-Add}
Let $ \cat C $ be an $ A_∞ $ category with product $ μ_{\cat C} $. The \emph{additive completion} $ \Add \cat C $ of $ \cat C $ is the category of formal sums of shifted objects of $ \cat C $:
\begin{equation*}
A_1 [k_1] ⊕ … ⊕ A_n [k_n].
\end{equation*}
The hom space between two such objects $ X = \bigoplus A_i [k_i] $ and $ Y = \bigoplus B_i [m_i] $ is
\begin{equation*}
\Hom_{\Add\cat C} (X, Y) = \bigoplus_{i, j} \Hom_{\cat C} (A_i, B_j) [m_j - k_i].
\end{equation*}
Here $ [-] $ denotes the right-shift. The products on $ \Add\cat C $ are given by multilinear extensions of
\begin{equation*}
μ_{\Add \cat C}^k (a_k, …, a_1) = (-1)^{\sum_{j < i} \Vert a_i \Vert l_j} μ_{\cat C}^k (a_k, …, a_1).
\end{equation*}
Here each $ a_i $ lies in some $ \Hom(X_i[k_i], X_{i+1} [k_{i+1}]) $. The integer $ l_i $ denotes the difference $ k_{i+1} - k_i $ between the shifts and the degree $ ‖a_i‖ $ is the degree of $ a_i $ as element of $ \Hom_{\cat C} (X_i, X_{i+1}) $.
\end{definition}

Next we recall the twisted completion $ \Tw\cat C $ of an $ A_∞ $-category $ \cat C $. The objects of this category are virtual chain complexes of objects of $ \cat C $. The usage of the twisted completion construction is entirely analogous to the usage of the category of complexes over an abelian category. We recall the definition and notation as follows:

\begin{definition}
\label{def:2Ainfty-ainfty-Tw}
A \emph{twisted complex} in $ \cat C $ is an object $ X ∈ \Add\cat C $ together with a morphism $ δ ∈ \Hom^1_{\Add\cat C} (X, X) $ of degree $ 1 $ such that $ δ $ is strictly upper triangular and satisfies the Maurer-Cartan equation:
\begin{equation*}
\MC(δ) ≔ μ^1 (δ) + μ^2 (δ, δ) + … = 0.
\end{equation*}
We may refer to the morphism $ δ $ as the \emph{twisted differential}. Note that the upper triangularity ensures that this sum is well-defined. The \emph{twisted completion} of $ \cat C $ is the $ A_∞ $-category $ \Tw\cat C $ whose objects are twisted complexes. Its hom spaces are the same as for the additive completion:
\begin{equation*}
\Hom_{\Tw\cat C} (X, Y) = \Hom_{\Add\cat C} (X, Y).
\end{equation*}
The products on $ \Tw \cat C $ of $ \cat C $ are given by embracing with $ δ $'s:
\begin{equation*}
μ_{\Tw \cat C}^k (a_k, …, a_1) = \sum_{n_0, …, n_k ≥ 0} μ_{\Add \cat C} (\underbrace{δ, …, δ}_{n_k}, a_k, …, a_1, \underbrace{δ, …, δ}_{n_0}).
\end{equation*}
\end{definition}

The notion of a \emph{curved $ A_∞ $-category} is a generalization of ordinary $ A_∞ $-category. In contrast to an ordinary $ A_∞ $-category, a curved $ A_∞ $-category also has an element $ μ^0_X $ of degree $ 2 $ associated with every object $ X ∈ \cat C $. The required curved $ A_∞ $-relations are the same as those of an $ A_∞ $-category, in particular allowing $ μ^0_X $ to appear as inner $ μ $. The first two curved $ A_∞ $-relations read
\begin{equation*}
μ^1 (μ^0) = 0, \quad μ^1 (μ^1 (a)) + (-1)^{‖a‖} μ^2 (μ^0, a) + μ^2 (a, μ^0) = 0.
\end{equation*}
Curved $ A_∞ $-categories are however ill-behaved: They do not have a notion of derived category. One may form the twisted completion, but if one enforces the Maurer-Cartan equation $ μ^0 + μ^1 (δ) + … = 0 $, very few objects remain and the construction is not functorial. If one instead discards the Maurer-Cartan requirement for twisted complexes, one obtains a category $ \Tw \cat C $ with curvature, the curvature given by the Maurer-Cartan formula $ μ^0 + μ^1 (δ) + … $. This category however has no minimal model $ \H\Tw\cat C $, because curvature prevents us from bringing $ μ^1 $ to zero. In short, curved $ A_∞ $-categories are not useful, except for the purpose of matrix factorizations.

We would like to stress that infinitesimal curvature however does not hurt. In \autoref{sec:prelim-ainfty-defo}, we recall what infinitesimal curvature entails and we explain in \autoref{sec:constructions} that infinitesimally curved $ A_∞ $-categories do have derived categories.

\section{Deformations of $ A_∞ $-categories}
\label{sec:prelim-ainfty}
In this section, we recollect material on $ A_∞ $-deformations. Theory on $ A_∞ $-deformations is scarce and implicit in the literature, so we provide a rigorous definition here that includes the infinitesimally curved case. We explain why curvature is inevitable if one wants to obtain a deformation theory invariant under quasi-equivalences.

In \autoref{sec:prelim-htensor}, we recall the completed tensor products of the form $ B \htensor X $, where $ B $ is a local algebra. In \autoref{sec:prelim-ainfty-defo}, we recall curved deformations of $ A_∞ $-categories. In \autoref{sec:prelim-ainfty-twisted}, we define their twisted completion. In \autoref{sec:curved-functors}, we recall functors between $ A_∞ $-categories and between deformations of $ A_∞ $-categories.

The material in the entire section is not original. For instance, Seidel already mentioned curved $ A_∞ $-deformations in \cite{Seidel-relative} and Lowen and Van den Bergh considered the “curvature problem” for formal deformations in \cite{Lowen-vdB}.

In this section, we will be comparing different deformations with another. We should set some terminology right before we get started. A “deformation base” $ B $ will be a complete local Noetherian unital $ ℂ $-algebra with residue field $ B / \mathfrak{m} = ℂ $. Every deformation $ \cat C_q $ over $ B $ can be interpreted as object of two different universes: either as deformation of $ \cat C $, or as a deformation of any $ A_∞ $-category. Let us depict this pictorially:
\begin{center}
\begin{tikzpicture}
\path[draw] (0, 0) circle[x radius=2.5, y radius=0.6] ++(up:0.1) node {deformations of $ \cat C $};
\path[draw] (7, 0) circle[x radius=3.5, y radius=0.8] ++(up:0.2) node {deformations of any category};
\path[fill] (0.5, -0.3) circle[radius=0.05] node[left] {$ \cat C_q' $};
\path[fill] (-0.5, -0.3) circle[radius=0.05] node[left] {$ \cat C_q $};
\path[fill] (6, -0.4) circle[radius=0.05] node[left] {$ \cat C_q $};
\path[fill] (7, -0.4) circle[radius=0.05] node[left] {$ \cat C_q' $};
\path[fill] (8, -0.4) circle[radius=0.05] node[left] {$ \cat D_q $};
\end{tikzpicture}
\end{center}
This perspective makes a difference: The correct notion for two deformations $ \cat C_q, \cat C_q' $ to be similar on the left side is to be gauge equivalent, while the correct notion for two deformations $ \cat C_q $ and $ \cat D_q $ to be similar on the right side is to be quasi-equivalent. In the former case, the leading term of the functor connecting the two is supposed to be the identity, in the latter case the leading term can be any quasi-equivalence of $ A_∞ $-categories.

To distinguish the two perspectives, we will often use the terminology \emph{$ A_∞ $-deformation} to refer to the left side, and \emph{deformed $ A_∞ $-category} to refer to the right side.

\subsection{The completed tensor product}
\label{sec:prelim-htensor}
In this section, we recall the completed tensor products of the form $ B \htensor X $, where $ B $ is a local algebra and $ X $ is a vector space. This serves as a preparation for \autoref{sec:prelim-ainfty-defo} where we define $ A_∞ $-deformations.

Throughout this paper, we deform over local rings like $ ℂ⟦q⟧ $. There are a few more conditions we put on the local ring: In order to work with the $ A_∞ $-formalism we need the ring to be a $ ℂ $-algebra. In order to speak of a special fiber, or algebraically of an $ \mathfrak{m} $-adic leading term, we need to require that its residue field is $ ℂ $ itself. As is customary in deformation theory, we shall also require that the ring be complete and Noetherian. We have decided to give the type of local rings a name in this paper:

\begin{definition}
\label{def:prelim-defo-base}
A \emph{deformation base} is a complete local Noetherian unital $ ℂ $-algebra $ B $ with residue field $ B/\mathfrak{m} = ℂ $. The maximal ideal is always denoted $ \mathfrak{m} $.
\end{definition}

\begin{remark}
By the Cohen structure theorem, every deformation base is of the form $ ℂ⟦x_1, …, x_n⟧ / I $ with $ I $ denoting some ideal.
\end{remark}

The idea behind formal $ A_∞ $-deformations is to tensor the hom spaces with a deformation base. One then looks at $ A_∞ $-structures on the enlarged collection of hom spaces which reduce to the original $ A_∞ $-structure once the maximal ideal is divided out. The construction of these tensored hom spaces makes use of the completed tensor product, which we now recall.

\begin{definition}
\label{def:prelim-htensor-def}
Let $ B $ be a deformation base and $ X $ a vector space. Then the \emph{completed tensor product} $ B \htensor X $ is the $ B $-module limit
\begin{equation*}
B \htensor X = \lim(… → B / \mathfrak{m}^1 ¤ X → B / \mathfrak{m}^0 ¤ X),
\end{equation*}
For simplicity, we write $ \mathfrak{m}^k X $ to denote the infinitesimal part $ \mathfrak{m}^k X = \mathfrak{m}^k \htensor X ⊂ B \htensor X $.
\end{definition}

\begin{remark}
In case $ B = ℂ⟦q⟧ $, the completed tensor product $ B \htensor X $ equals the even more well-known space $ X⟦q⟧ $. The space $ X⟦q⟧ $ consists of formal $ X $-valued power series in one variable. This way $ X⟦q⟧ $ becomes naturally a $ ℂ⟦q⟧ $-module. The space is different from $ ℂ⟦q⟧ ¤ X $. In fact, elements of $ ℂ⟦q⟧ ¤ X $ are only those power series which can be written as a finite sum of pure tensors $ a ¤ x $. Simply speaking, in $ ℂ⟦q⟧ ¤ X $, the power series entries are divided into finitely many partitions in which all coefficients are interrelated. Meanwhile in $ X⟦q⟧ $ any entries can be chosen at random. However if $ X $ is finite-dimensional, then $ X⟦q⟧ = ℂ⟦q⟧ ¤ X $.
\end{remark}

There are two popular ways of defining formal deformations of an associative algebra $ A $. The first definition asks for a product $ μ_q: A ¤ A → B \htensor A $ and the second definition asks for a product $ μ_q: (B \htensor A) ¤ (B \htensor A) → B \htensor A $. The first definition immediately gives rise to a Maurer-Cartan element, while in the second definition associativity is formulated more naturally. In what follows, we shall explain briefly why both are equivalent.

\begin{definition}
Let $ B $ be a deformation base. Then the $ \mathfrak{m} $-adic topology on $ B $ is the topology on $ B $ generated by open neighborhoods $ x + \mathfrak{m}^k ⊂ B $ for $ x ∈ B $ and $ k ∈ ℕ $. The $ \mathfrak{m} $-adic topology on $ B \htensor X $ is generated by the open neighborhoods $ x + \mathfrak{m}^k \htensor X ⊂ B \htensor X $ for every $ x ∈ B \htensor X $ and $ k ∈ ℕ $. A map $ φ: B \htensor X → B \htensor Y $ is \emph{continuous} if it is continuous with respect to the $ \mathfrak{m} $-adic topologies. A map $ φ: (B \htensor X_k) ¤ … ¤ (B \htensor X_1) → B \htensor Y $ is \emph{continuous} if for every $ 1 ≤ i ≤ k $ and every sequence of elements $ x_1, …, \hat x_i, …, x_k $ the map
\begin{equation*}
μ(x_k, …, -, …, x_1): B \htensor X_i → B \htensor Y
\end{equation*}
is continuous.
\end{definition}

\begin{remark}
It is well-known that the $ \mathfrak{m} $-adic topology turns $ B \htensor X $ into a sequential Hausdorff space. It is also well-known that $ B \htensor X $ can simultaneously be interpreted as limit and completion. More precisely, $ B \htensor X $ is the completion of $ B ¤ X $ with respect to the so-called $ \mathfrak{m} $-adic metric on $ B ¤ X $. For convenience, we may from time to time use expressions like $ x = \landau(\mathfrak{m}^k) $ to indicate $ x ∈ \mathfrak{m}^k X $.
\end{remark}

\begin{remark}
Every element in $ B \htensor X $ can be written as a series $ \sum_{i = 0}^∞ m_i x_i $. Here $ m_i $ is a sequence of elements $ m_i ∈ \mathfrak{m}^{→∞} $ and $ x_i $ is a sequence of elements $ x_i ∈ X $. We have used the notation $ m_i ∈ \mathfrak{m}^{→∞} $ to indicate that $ m_i ∈ \mathfrak{m}^{k_i} $ for some sequence $ (k_i) ⊂ ℕ $ with $ k_i → ∞ $.
\end{remark}

\begin{remark}
Let $ φ: B \htensor X → B \htensor Y $ be a $ B $-linear map. If $ φ(\mathfrak{m}^k X) ⊂ \mathfrak{m}^k Y $, then $ φ $ is continuous.
\end{remark}

Whenever $ X → B \htensor Y $ is a linear map, we can extend it uniquely to a $ B $-linear continuous map $ B \htensor X → B \htensor Y $. Conversely, we can restrict any $ B $-linear map $ B \htensor X → B \htensor Y $ to a linear map $ X → B \htensor Y $. Restriction and extension are in fact inverse to each other, providing a one-to-one correspondence:

\begin{lemma}
\label{th:prelim-htensor-autocontinuous}
Let $ B $ be a deformation base and $ X, Y $ be vector spaces. Then a $ B $-linear map $ B \htensor X → B \htensor Y $ is automatically continuous. There is a one-to-one correspondence between:
\begin{itemize}
\item linear maps $ X → B \htensor Y $,
\item $ B $-linear maps $ B \htensor X → B \htensor Y $,
\item $ B $-linear continuous maps $ B \htensor X → B \htensor Y $.
\end{itemize}
\end{lemma}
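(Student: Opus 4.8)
The plan is to establish the three-way correspondence in four steps — extension, restriction, mutual inverseness, and finally automatic continuity — using throughout that $B \htensor X$ is complete and Hausdorff for the $\mathfrak{m}$-adic topology and that the algebraic tensor product $B \otimes X$, which is the $B$-span of $1 \otimes X \cong X$, is dense in $B \htensor X$.

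\textbf{Extension.} Given a linear map $f \colon X \to B \htensor Y$, pick a $\C$-basis of $X$; since $B \otimes X$ is free as a $B$-module on that basis, $f$ extends uniquely to a $B$-linear map $\tilde f \colon B \otimes X \to B \htensor Y$ with $\tilde f(1 \otimes x) = f(x)$ for all $x \in X$. A finite element $\sum_j b_j \otimes x_j$ of $\mathfrak{m}^k \otimes X$ (so $b_j \in \mathfrak{m}^k$) is sent to $\sum_j b_j f(x_j) \in \mathfrak{m}^k (B \htensor Y) \subseteq \mathfrak{m}^k Y$, so $\tilde f(\mathfrak{m}^k \otimes X) \subseteq \mathfrak{m}^k Y$ for every $k$ and $\tilde f$ is uniformly continuous for the $\mathfrak{m}$-adic metrics. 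As $B \htensor Y$ is complete, $\tilde f$ extends uniquely to a continuous map $\hat f \colon B \htensor X \to B \htensor Y$; and $\hat f$ is $B$-linear because $\xi \mapsto \hat f(b \xi) - b \hat f(\xi)$ and $(\xi, \xi') \mapsto \hat f(\xi + \xi') - \hat f(\xi) - \hat f(\xi')$ are continuous and vanish on the dense subset $B \otimes X$.

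\textbf{Restriction and inverseness.} Restriction sends a $B$-linear continuous map $g$ to $g|_X \colon X \to B \htensor Y$ along $X = 1 \otimes X \subseteq B \htensor X$; this is manifestly well-defined. The two assignments are mutually inverse: $\hat f|_X = \tilde f|_X = f$ is immediate from the construction, and conversely $\widehat{g|_X}$ and $g$ are both $B$-linear, hence agree on the $B$-span $B \otimes X$, and both continuous, hence agree on its closure $B \htensor X$.

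\textbf{Automatic continuity}, which I expect to be the only genuine obstacle. I claim \emph{any} $B$-linear $g \colon B \htensor X \to B \htensor Y$ satisfies $g(\mathfrak{m}^k X) \subseteq \mathfrak{m}^k Y$ for all $k$, whence $g$ is continuous by the Remark preceding the lemma; this identifies the second and third bullets, and together with the previous steps yields the full statement. The key point is that, $B$ being Noetherian, $\mathfrak{m}^k$ is finitely generated, say $\mathfrak{m}^k = (f_1, \dots, f_r)$ with $f_s \in \mathfrak{m}^k$, and every $\xi \in \mathfrak{m}^k X$ is a \emph{finite} $B$-combination $\xi = \sum_{s=1}^r f_s \eta_s$ with $\eta_s \in B \htensor X$. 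To see this, use density of $\mathfrak{m}^k \otimes X$ in $\mathfrak{m}^k X$ to write $\xi = \sum_i m_i x_i$ convergent with $x_i \in X$ and $m_i \in \mathfrak{m}^{k_i}$, $k_i \ge k$, $k_i \to \infty$ (a telescoping of partial sums); then $m_i \in \mathfrak{m}^{k_i - k}\mathfrak{m}^k$ gives $m_i = \sum_s b_{i,s} f_s$ with $b_{i,s} \in \mathfrak{m}^{k_i - k} \to 0$, so $\eta_s := \sum_i b_{i,s} x_i$ converges in $B \htensor X$ and $\xi = \sum_s f_s \eta_s$. With $\xi$ in this form, $B$-linearity gives $g(\xi) = \sum_s f_s\, g(\eta_s) \in \mathfrak{m}^k (B \htensor Y) \subseteq \mathfrak{m}^k Y$, as required. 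The only delicate bookkeeping is this rearrangement and keeping track of the orders $k_i \to \infty$; the rest is formal.
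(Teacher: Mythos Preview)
Your proof is correct and follows essentially the same approach as the paper: both show automatic continuity by exploiting finite generation of $\mathfrak{m}^k$ to write any element of $\mathfrak{m}^k X$ as a finite $B$-combination $\sum_s f_s \eta_s$ (you invoke Noetherianness directly, the paper invokes the Cohen structure theorem and groups by degree-$k$ monomials), and both obtain the extension by continuity from the dense subspace $B \otimes X$ (you phrase this via completeness and density, the paper via the universal property of the inverse limit). The mathematical content is the same; if anything, your presentation of the extension step is slightly cleaner.
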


\begin{proof}
The proof consists of three parts: First, we show that every $ B $-linear map $ B \htensor X → B \htensor Y $ is continuous. Second, we recall how to extend a map $ X → B \htensor Y $ to $ B \htensor X → B \htensor Y $. Third, we comment on the one-to-one aspect of the claim.

For the first step, we show that any $ B $-linear map $ φ: B \htensor X → B \htensor Y $ is continuous. Let $ k ∈ ℕ $. We claim that $ φ(\mathfrak{m}^k X) ⊂ \mathfrak{m}^k Y $. The idea is to exploit the Cohen structure theorem. Write $ B = ℂ⟦q_1, …, q_n⟧ / I $, and regard the maximal ideal $ \mathfrak{m} = (q_1, …, q_n) $. With this in mind, we can write any element $ x ∈ \mathfrak{m}^k X $ as a series
\begin{equation*}
x = \sum_{i = 0}^∞ m_i \tilde m_i y_i.
\end{equation*}
Here $ m_i $ is a monomial of degree $ k $ in the variables $ q_1, …, q_n $, the letter $ \tilde m_i $ denotes a sequence $ \tilde m_i ∈ \mathfrak{m}^{→∞} $, and $ y_i ∈ Y $. We conclude
\begin{equation*}
x = \sum_{\substack{\text{monomials } M \\ \text{ of degree } k}} M \sum_{\substack{i ≥ 0 \\ m_i = M}} \tilde m_i y_i.
\end{equation*}
The outer sum is finite. For every monomial $ M $ of degree $ k $, the inner sum is an element $ x_M ∈ B \htensor X $. We get that
\begin{equation*}
φ(x) = \sum_{\substack{\text{monomials } M \\ \text{ of degree } k}} M φ(x_M).
\end{equation*}
We conclude that $ φ(x) ∈ \mathfrak{m}^k Y $. This shows $ φ(\mathfrak{m}^k X) ⊂ \mathfrak{m}^k Y $. In particular, $ φ $ is continuous.

For the second step, denote by $ π_i $ the projection maps $ π_i: B \htensor X → B / \mathfrak{m}^i ¤ X $ or $ B \htensor Y → B/ \mathfrak{m}^i ¤ Y $ and by $ π_{ij} $ the projection maps $ π_{ij}: B/\mathfrak{m}^i ¤ Y → B/\mathfrak{m}^j ¤ Y $ for $ i > j $. Note that $ (π_{ij} ¤ \Id_Y) ∘ π_i = π_j $ for $ i > j $.

Let now $ F: X → B \htensor Y $ be a linear map. In order to define a map $ \hat F: B \htensor X → B \htensor Y $, we shall build maps $ \hat F_i: B \htensor X → B / \mathfrak{m}^i ¤ Y $ and then use the universal property to combine them into $ \hat F $.

Let us now construct auxiliary maps $ F_i $ and the maps $ \hat F_i $. Define $ F_i: B / \mathfrak{m}^i ¤ X → B / \mathfrak{m}^i ¤ Y $ as the $ B/\mathfrak{m}^i $-linear extension of $ π_i F: X → B/\mathfrak{m}^i ¤ Y $. Then let $ \hat F_i: B \htensor X → B/\mathfrak{m}^i ¤ Y $ be the composition $ F_i π_i $. More directly, we could write $ \hat F_i (z) = F(π_i (z)) $, where on the right-hand side $ F $ is interpreted $ B/\mathfrak{m}^i $-linearly.

We claim that $ \hat F_j = (π_{ij} ¤ \Id_Y) ∘ \hat F_i $ for $ i > j $. Indeed,
\begin{equation*}
π_{ij} (\hat F_i (z)) = (π_{ij} ¤ \Id_Y) (F(π_i (z)) = F((π_{ij} ¤ \Id_X) (π_i (z))) = F(π_j (z)) = \hat F_j (z).
\end{equation*}
This proves that the family of maps $ \{\hat F_i\}_{i ∈ ℕ} $ is compatible with the projections $ π_{ij} ¤ \Id_Y $. By the universal property of $ B \htensor Y $, this family of maps factors through the limit $ B \htensor Y $, yielding a $ B $-linear map
\begin{equation*}
\hat F: B \htensor X → B \htensor Y, \text{ with } π_i \hat F = \hat F_i.
\end{equation*}
The map $ \hat F $ can also be written explicitly as follows: Let $ x = \sum_{i = 0}^∞ m_i x_i $ be an element of $ B \htensor X $, with $ m_i ∈ \mathfrak{m}^{→∞} $ and $ x_i ∈ X $. Then $ \hat F(\sum m_i x_i) = \sum m_i F(x_i) $.

As third step of the proof, we comment on the one-to-one aspect in the claim. The only remaining statement to explain is that for a given map $ F: X → B \htensor Y $, there is only one single $ B $-linear continuous extension to a map $ B \htensor X → B \htensor Y $. But this is obvious, since $ B $-linearity already determines the value on $ B ¤ X $ and continuity then determines the value on all of $ B \htensor X $. This finishes the proof.
\end{proof}

We shall provide a few more standard utilities. Recall that a map $ φ: X → Y $ of topological spaces is an \emph{embedding} if it is a homeomorphism onto its image, the image being equipped with subspace topology. The \emph{leading term} of a $ B $-linear map $ φ: B \htensor X → B \htensor Y $ is the map $ φ_0: X → Y $ given by the composition $ φ_0 = π φ \restr{X} $, where $ π: B \htensor Y → Y $ denotes the standard projection.

\begin{lemma}
\label{th:prelim-htensor-isocriterion}
Let $ X, Y $ be vector spaces and $ φ: B \htensor X → B \htensor Y $ be $ B $-linear with injective leading term. Then $ φ $ is an embedding and its image is closed. If the leading term is surjective, then $ φ $ is an isomorphism.
\end{lemma}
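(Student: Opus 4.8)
The plan is to pass to the associated graded and reduce everything to the triviality that a $\C$-vector space is flat. Since $\varphi$ is $B$-linear, the proof of \autoref{th:prelim-htensor-autocontinuous} shows $\varphi(\mathfrak{m}^k X)\subseteq\mathfrak{m}^k Y$ for all $k$, so $\varphi$ respects the $\mathfrak{m}$-adic filtrations and induces $\C$-linear maps on the graded pieces $\operatorname{gr}^k(B\htensor X):=\mathfrak{m}^kX/\mathfrak{m}^{k+1}X$. A short Mittag--Leffler argument --- tensor the exact sequence $0\to\mathfrak{m}^{k+1}/\mathfrak{m}^i\to\mathfrak{m}^k/\mathfrak{m}^i\to\mathfrak{m}^k/\mathfrak{m}^{k+1}\to0$ with $X$ over $\C$ and pass to the inverse limit over $i$ --- identifies $\operatorname{gr}^k(B\htensor X)\cong(\mathfrak{m}^k/\mathfrak{m}^{k+1})\otimes_\C X$ naturally, and $B$-linearity then forces $\operatorname{gr}^k\varphi=\id_{\mathfrak{m}^k/\mathfrak{m}^{k+1}}\otimes\varphi_0$. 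Because $\C$-vector spaces are flat, $\operatorname{gr}^k\varphi$ is injective for all $k$ as soon as $\varphi_0$ is injective, and surjective for all $k$ as soon as $\varphi_0$ is surjective.

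Next I would deduce the claims about $\varphi$ from those about the $\operatorname{gr}^k\varphi$ by the standard filtered-module bookkeeping, using that the $\mathfrak{m}$-adic topology is Hausdorff, i.e. $\bigcap_k\mathfrak{m}^kX=0$. For injectivity: if $\varphi(x)=0$ with $x\neq0$, let $k$ be largest with $x\in\mathfrak{m}^kX$; then $0\neq\overline x\in\operatorname{gr}^k$ while $\operatorname{gr}^k\varphi(\overline x)=\overline{\varphi(x)}=0$, contradicting injectivity of $\operatorname{gr}^k\varphi$. The same argument upgrades to $\varphi(\mathfrak{m}^kX)=\operatorname{im}\varphi\cap\mathfrak{m}^kY$ for every $k$: if $\varphi(x)\in\mathfrak{m}^kY$ but $x\notin\mathfrak{m}^kX$, apply the previous reasoning in $\operatorname{gr}^j$ for the largest $j<k$ with $x\in\mathfrak{m}^jX$. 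Since $\varphi$ is continuous by \autoref{th:prelim-htensor-autocontinuous} and this equality says precisely that $\varphi$ maps each basic open $x_0+\mathfrak{m}^kX$ onto the basic open $(\varphi(x_0)+\mathfrak{m}^kY)\cap\operatorname{im}\varphi$ of its image, $\varphi$ is a homeomorphism onto its image, hence an embedding.

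For the closed image I would invoke completeness of $B\htensor X$: given $y$ in the closure of $\operatorname{im}\varphi$, choose $\varphi(x_n)\to y$ (the space is metrizable); the equality $\varphi(\mathfrak{m}^kX)=\operatorname{im}\varphi\cap\mathfrak{m}^kY$ together with injectivity of $\varphi$ makes $(x_n)$ Cauchy, so $x_n\to x$ by completeness, and continuity yields $\varphi(x)=y$. For the addendum, assume $\varphi_0$ is also surjective, hence every $\operatorname{gr}^k\varphi$ is surjective. Then $\varphi$ is onto by successive approximation: solve $\varphi(x_0)\equiv y\pmod{\mathfrak{m}Y}$ via $\operatorname{gr}^0\varphi=\varphi_0$, correct by an element of $\mathfrak{m}X$ to solve modulo $\mathfrak{m}^2Y$ via $\operatorname{gr}^1\varphi$, and iterate; the partial sums are Cauchy and their limit $x$ satisfies $\varphi(x)=y$. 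Together with injectivity and the embedding property, $\varphi$ is a $B$-linear homeomorphism, i.e. an isomorphism whose inverse is automatically continuous by \autoref{th:prelim-htensor-autocontinuous}.

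The only genuine content is the reduction in the first paragraph: the identification $\operatorname{gr}^k(B\htensor X)\cong(\mathfrak{m}^k/\mathfrak{m}^{k+1})\otimes_\C X$ and the formula $\operatorname{gr}^k\varphi=\id\otimes\varphi_0$; everything afterwards is routine manipulation with complete filtered modules. If one prefers to avoid the inverse-limit exactness argument, one can instead run the same flatness/five-lemma induction on each finite truncation $B/\mathfrak{m}^i\otimes X\to B/\mathfrak{m}^i\otimes Y$ and reassemble, but the graded formulation is what most cleanly produces the embedding property and the closedness of the image at once.
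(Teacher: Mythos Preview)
Your proof is correct and follows essentially the same approach as the paper: both reduce to the associated graded via the identification $\operatorname{gr}^k\varphi=\id_{\mathfrak{m}^k/\mathfrak{m}^{k+1}}\otimes\varphi_0$, then deduce injectivity, filtration-strictness (hence the embedding property), closedness of the image via a Cauchy-sequence argument, and surjectivity by successive approximation. You are slightly more careful than the paper in justifying the identification $\operatorname{gr}^k(B\htensor X)\cong(\mathfrak{m}^k/\mathfrak{m}^{k+1})\otimes_{\C}X$ via a Mittag--Leffler argument, whereas the paper simply records it as an observation.
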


\begin{proof}
Before we dive into the proof, we start with an observation regarding the leading term. Let $ φ_0: X → Y $ be the leading term of $ φ $ and define for every $ i ∈ ℕ $ the map
\begin{equation}
\label{eq:prelim-htensor-tangentmap}
φ_i: \mathfrak{m}^i / \mathfrak{m}^{i+1} \tensor X → \mathfrak{m}^i / \mathfrak{m}^{i+1} \tensor Y
\end{equation}
induced from $ φ $. Our observation is that $ φ_i $ is in fact equal to $ \id_{\mathfrak{m}^i / \mathfrak{m}^{i+1}} \tensor φ_0 $.

We are now ready to start the proof. First we show that $ φ $ is injective. Second we show that $ φ^{-1}: \Im(φ) → B \htensor X $ is continuous. Third we show that $ \Im(φ) $ is closed. Fourth we prove $ φ $ surjective if $ φ_0 $ is surjective.

For the first part of the proof, we show that $ φ $ injective. Let $ x ∈ B \htensor X $ with $ φ(x) = 0 $. By induction, we show that $ x ∈ \mathfrak{m}^i X $ for every $ i ∈ ℕ $. For $ i = 0 $ this is clear. As induction hypothesis, assume that $ x ∈ \mathfrak{m}^i X $. Then $ φ_i ([x]) = [φ(x)] = 0 $. Since $ φ_i $ is injective, we obtain $ x ∈ \mathfrak{m}^{i+1} X $. This finishes the induction. In consequence we have $ x ∈ \mathfrak{m}^i X $ for every $ i ∈ ℕ $ and therefore $ x = 0 $. This proves $ φ $ injective.

For the second part of the proof, we show that $ φ^{-1} $ preserves the filtration by $ \mathfrak{m} $. In other words, we show that $ φ(x) ∈ \mathfrak{m}^i Y $ implies $ x ∈ \mathfrak{m}^i X $. We do this by induction over $ i ∈ ℕ $. For $ i = 0 $ this is clear. Assume this statements holds for a certain $ i ∈ ℕ $. Now let $ φ(x) ∈ \mathfrak{m}^{i+1} Y $. Then $ x ∈ \mathfrak{m}^i X $ by induction hypothesis. We get $ φ_i ([x]) = 0 $, since $ φ(x) ∈ \mathfrak{m}^{i+1} Y $. Since $ φ_i $ is injective, we get $ x ∈ \mathfrak{m}^{i+1} X $. This finishes the induction. Finally, we have shown $ φ^{-1} (\mathfrak{m}^i Y) ⊂ \mathfrak{m}^i X $. This renders $ φ^{-1}: φ(B \htensor X) → B \htensor X $ continuous.

For the third part of the proof, we check that $ \Im(φ) $ is closed. Since $ B \htensor Y $ is a sequential space, it suffices to check that any sequence $ φ(x_n) ⊂ \Im(φ) $ converging in $ B \htensor Y $ converges in $ \Im(φ) $. Pick a sequence $ (x_n) ⊂ B \htensor X $ and assume $ φ(x_n) → y ∈ B \htensor Y $. This makes $ φ(x_n) - y ∈ \mathfrak{m}^{→ ∞} Y $. We have $ φ(x_n) - φ(x_m) ∈ \mathfrak{m}^{→ ∞} Y $ and by continuity of $ φ^{-1} $ we get $ x_n - x_m ∈ \mathfrak{m}^{→ ∞} $. In particuar, $ x_n $ is a Cauchy sequence and converges to some $ x ∈ B \htensor X $. We get $ φ(x) = \lim φ(x_n) = y $ and therefore $ y ∈ \Im(φ) $. This proves $ \Im(φ) $ closed.

For the fourth part of the proof, consider $ y ∈ B \htensor Y $. We construct inductively a sequence $ (x_n) $ with $ x_n ∈ \mathfrak{m}^n \htensor X $ such that $ φ(\sum_{i = 0}^n x_i) = π_n (y) $. For $ i = 0 $, let $ x_0 ∈ X $ be defined as $ x_0 = φ_0^{-1} (π_0 (y)) $. Now assume the sequence has been constructed for indices until $ n ∈ ℕ $. Put $ z ≔ y - φ(\sum^n x_i) ∈ \mathfrak{m}^n Y $. Since $ φ_0 $ is an isomorphism, the map $ φ_n $ from \eqref{eq:prelim-htensor-tangentmap} is an isomorphism as well. Put
\begin{equation*}
x_{n+1}' ≔ φ_{n+1}^{-1} (z) ∈ \mathfrak{m}^{n+1} / \mathfrak{m}^{n+2} X.
\end{equation*}
Let $ x_{n+1} ∈ \mathfrak{m}^{n+1} X $ be any lift of $ x_{n+1}' $. Then we have
\begin{equation*}
φ \left(\sum_{i = 0}^{n+1} x_i\right) = y - z + φ(x_{n+1}) = y + \landau(\mathfrak{m}^{n+2}).
\end{equation*}
This finishes the inductive construction of the sequence $ (x_i) $. Finally, the series $ x = \sum x_i $ converges and $ φ(x) = y $. This proves $ φ $ surjective.
\end{proof}

The pathway to defining $ A_∞ $-deformations is now clear: When $ \cat C $ is an $ A_∞ $-category, a deformation of $ \cat C $ will always be modeled on the collection of enlarged hom spaces $ \{B \htensor \Hom_{\cat C} (X, Y)\}_{X, Y ∈ \cat C} $. Any $ B $-multilinear product on these hom spaces is automatically continuous. Similarly, functors of $ A_∞ $-deformations will be defined as maps between tensor products of the enlarged hom spaces and will be automatically continuous as well.

\subsection{Curved deformations}
\label{sec:prelim-ainfty-defo}
In this section, we give a quick definition of curved deformations of $ A_∞ $-categories. We have already used this notion in \paperone. The notion is not surprising and known to experts. Despite being uncomfortable to work with, curvature comes naturally via the Hochschild complex and is necessary if one aims at deformation theory invariant under quasi-equivalences. The curvature in $ A_∞ $-deformations will always be infinitesimal, which we consider harmless in contrast with the different notion of curved $ A_∞ $-categories \cite{Caldararu-Tu}.

Let us recall the setup of Hochschild cohomology in the classical case. Let $ A $ be an associative algebra. As the reader probably knows, Hochschild cohomology $ \HH^2 (A) $ captures the associative deformations of the algebra over $ ℂ[ε] / (ε^2) $. More precisely, a 2-cochain $ ν ∈ \HC^2 (A) $ is a cocycle if and only if $ μ + εν $ is an associative product on $ A ⊕ Aε $ (note $ ε^2 = 0 $). Classical Hochschild cohomology in other degrees than 2 helps characterize the deformation problem, but provides no actual deformations.

Since Hochschild's original definition in 1946, Hochschild cohomology has been generalized to $ A_∞ $-categories. The trick is to use the same formula for the differential, with two adaptations: include also higher products $ μ^k $ instead of only $ μ^2 $, and use grading induced from the shift $ \cat C[1] $. We will make this precise in \autoref{sec:curved-hochschild}.

What should an $ A_∞ $-deformation be then? The naive answer would be to allow deformations of all products $ μ^k $. This is however too shortsighted: We contend that for a notion of $ A_∞ $-deformations whose infinitesimal deformations are classified by a cochain complex (or better a DGLA), the $ A_∞ $-Hochschild complex is the natural choice. Its Maurer-Cartan elements consist however not only of deformations to the products $ μ^k $ with $ k ≥ 1 $, but also introduce curvature $ μ^0 $.

We are now ready to define $ A_∞ $-deformations precisely.

\begin{definition}
\label{def:prelim-defo-def}
Let $ \cat C $ be an $ A_∞ $ category with products $ μ $ and $ B $ a deformation base. An (infinitesimally curved) \emph{deformation} $ \cat C_q $ of $ \cat C $ consists of
\begin{itemize}
\item The same objects as $ \cat C $,
\item Hom spaces $ \Hom_{\cat C_q} (X, Y) = B \htensor \Hom_{\cat C} (X, Y) $ for $ X, Y ∈ \cat C $,
\item $ B $-multilinear products of degree $ 2 - k $
\begin{equation*}
μ_q^k: \Hom_{\cat C_q} (X_k, X_{k+1}) \tensor … \tensor \Hom_{\cat C_q} (X_1, X_2) → \Hom_{\cat C_q} (X_1, X_{k+1}), ~ k ≥ 1
\end{equation*}
\item Curvature of degree $ 2 $ for every object $ X ∈ \cat C $
\begin{equation*}
μ_{q, X}^0 ∈ \mathfrak{m} \Hom_{\cat C_q} (X, X),
\end{equation*}
\end{itemize}
such that $ μ_q $ reduces to $ μ $ once the maximal ideal $ \mathfrak{m} ⊂ R $ is divided out, and $ μ_q $ satisfies the curved $ A_∞ $ ($ cA_∞ $) relations
\begin{equation*}
\sum_{k ≥ l ≥ m ≥ 0} (-1)^{‖a_m‖ + … + ‖a_1‖} μ_q (a_k, …, μ_q (a_l, …), a_m, …, a_1) = 0.
\end{equation*}
The deformation is \emph{unital} if the deformed higher products still satisfy the unitality axioms
\begin{equation*}
μ^2_q (a, \id_X) = a, ~ μ^2_q (\id_Y, a) = (-1)^{|a|} a, ~ μ^{≥3}_q (…, \id_X, …) = 0.
\end{equation*}
\end{definition}

We use the terms $ A_∞ $-deformation and deformation of an $ A_∞ $-category interchangeably. Whenever we speak of deformations of $ A_∞ $-categories in this paper, they are allowed to be (infinitesimally) curved.

\begin{remark}
As explained in \autoref{sec:prelim-htensor}, the product $ μ^k_q $ is automatically $ \mathfrak{m} $-adically continuous. Spelling out this continuity requirement, the datum of the map $ μ^k $ is equivalent to the datum of merely multilinear maps
\begin{equation*}
μ_q: \Hom_{\cat C} (X_k, X_{k+1}) \tensor … \tensor \Hom_{\cat C} (X_1, X_2) ~→~ B \htensor \Hom_{\cat C} (X_1, X_{k+1}).
\end{equation*}
\end{remark}

There is a notion for two objects to be quasi-isomorphic in an $ A_∞ $-deformation. We provide here an ad-hoc definition which seems odd at first, but we will encounter evidence in \autoref{sec:constructions} and \ref{sec:kadeishvili} which supports correctness of the definition.

\begin{definition}
\label{def:prelim-defo-qi}
Let $ \cat C $ be an $ A_∞ $-category and $ \cat C_q $ a deformation. Let $ X, Y ∈ \cat C_q $ be two objects. Then $ X $ and $ Y $ are \emph{quasi-isomorphic} if they are quasi-isomorphic in $ \cat C $.
\end{definition}

It sometimes comes handy to work with deformations that include more objects than $ \cat C $ does. We fix terminology as follows:

\begin{definition}
Let $ \cat C $ be an $ A_∞ $-category. Let $ O $ be an arbitrary set of objects and $ F: O → \Ob(\cat C) $ a map. Then the \emph{object-cloned version} $ F^* \cat C $ of $ \cat C $ is the $ A_∞ $-category given by object set $ O $, hom spaces
\begin{equation*}
\Hom_{F^* \cat C} (X, Y) = \Hom_{\cat C} (F(X), F(Y)), \quad X, Y ∈ O,
\end{equation*}
and products simply given by the same composition as in $ \cat C $.
\end{definition}

An $ A_∞ $-deformation of $ \cat C $ always gives an induced deformation of $ F^* \cat C $. This provides a map from deformations of $ \cat C $ to deformations of $ F^* \cat C $. In case $ F $ is surjective, the categories $ \cat C $ and $ F^* \cat C $ are equivalent and as we elaborate in \autoref{sec:lens} actually have the same deformation theory. For this reason, we fix the following terminology:

\begin{definition}
\label{def:prelim-ainfty-objectcloning}
Let $ \cat C $ be an $ A_∞ $-category and $ B $ a deformation base. Let $ O $ be an arbitrary set and $ F: O → \Ob\cat C $ a map. An \emph{object-cloning deformation} is a deformation $ \cat D_q $ of $ \cat D = F^* \cat C $. The object-cloning deformation is \emph{essentially surjective} if $ F: \Ob\cat D → \Ob\cat C $ reaches all objects of $ \cat C $ up to isomorphism.
\end{definition}

\subsection{Twisted completion}
\label{sec:prelim-ainfty-twisted}
In this section, we define the twisted completion of a deformed $ A_∞ $-category. The starting point is an $ A_∞ $-category $ \cat C $ together with a deformation $ \cat C_q $. The aim is to define a deformation $ \Tw\cat C_q $ of $ \Tw\cat C $ which incorporates the deformation given by $ \cat C_q $. More formally, the aim is to push the deformation $ \cat C_q $ onto $ \Tw\cat C $ along the inclusion $ \cat C → \Tw\cat C $ which is a derived equivalence. Since we only explain the pushforward of deformations in \autoref{sec:lens}, we give here a direct and explicit definition of $ \Tw\cat C_q $.

We start by defining additive completions for deformed $ A_∞ $-categories. Let $ \cat C $ be an $ A_∞ $-category and $ \cat C_q $ a deformation. The aim is to define an additive completion $ \Add\cat C_q $ in such a way that it is a deformation of $ \Add\cat C $. This is straight-forward:

\begin{definition}
Let $ \cat C $ be an $ A_∞ $-category and $ \cat C_q $ a deformation. Then the \emph{additive completion} $ \Add\cat C_q $ is the deformation of $ \Add\cat C $ given by the following deformed product:
\begin{equation*}
μ_{\Add\cat C_q}^k (α_k, …, α_1) = \sum (-1)^{\sum_{j < i} \Vert α_i \Vert l_j} μ_{\cat C_q}^k (α_k, …, α_1),
\end{equation*}
with the same sign convention as in the non-deformed case.
\end{definition}

\begin{remark}
The only difference between $ μ_{\Add \cat C} $ and $ μ_{\Add \cat C_q} $ lies in using the non-deformed product $ μ_{\cat C} $ for the former and the deformed product $ μ_{\cat C_q} $ for the latter.
\end{remark}

We now extend the twisted completion construction to the case of $ A_∞ $-deformations. Let $ \cat C $ be an $ A_∞ $-category and $ \cat C_q $ a deformation. The aim is to define a twisted completion $ \Tw\cat C_q $ in such a way that it is a deformation of $ \Tw\cat C $. In particular, the category $ \Tw\cat C_q $ should have the same objects as $ \Tw\cat C $. With this in mind, we define:

\begin{definition}
\label{def:curved-twisted}
Let $ \cat C $ be an $ A_∞ $ category with products $ μ_{\cat C} $ and $ \cat C_q $ a deformation with products $ μ_{\cat C_q} $. Then the \emph{twisted completion} $ \Tw \cat C_q $ is the (possibly curved) deformation of $ \Tw \cat C $ given by the deformed products
\begin{equation*}
μ_{\Tw \cat C_q}^k (α_k, …, α_1) = \sum_{n_0, …, n_k ≥ 0} μ_{\Add \cat C_q} (\underbrace{δ, …, δ}_{n_k}, α_k, …, α_1, \underbrace{δ, …, δ}_{n_0}).
\end{equation*}
\end{definition}

\begin{remark}
As one may have expected, the product $ μ_{\Tw \cat C_q} $ now simply uses $ μ_{\cat C_q} $ instead of $ μ_{\cat C} $. The set of objects of $ \Tw\cat C_q $ may however be surprising: The objects of $ \Tw\cat C_q $ are not formed with twisted differentials $ δ ∈ \Hom^1_{\cat C_q} (X, X) $. Instead, the objects of $ \Tw\cat C_q $ are twisted complexes $ (X, δ) ∈ \Tw\cat C $, in other words, their $ δ $-differential must lie in $ \Hom^1_{\cat C} (X, X) $. It is easily checked that $ μ_{\Tw\cat C_q} $ satisfies the (curved) $ A_∞ $-axioms, rendering $ \Tw\cat C_q $ a genuine $ A_∞ $-deformation of $ \Tw\cat C $.
\end{remark}

\begin{remark}
Denoting the twisted completion of $ \cat C_q $ by $ \Tw\cat C_q $ constitutes a slight abuse of notation: “$ \Tw\cat C_q $” suggests that twisted complexes are to be taken with the $ δ $-differential formed from values in $ \cat C_q $, which is not the case. A more proper notation for the twisted completion of $ \cat C_q $ would have been $ (\Tw\cat C)_q $, which we however found too complicated.
\end{remark}

\begin{remark}
Typical objects in $ \Tw\cat C_q $ have curvature. Indeed, according to \autoref{def:curved-twisted}, an object $ (X, δ) ∈ \Tw\cat C_q $ has curvature
\begin{equation*}
μ^0_{(X, δ)} = μ^0_{X, \Add\cat C_q} + μ^1_{\Add\cat C_q} (δ) + μ^2_{\Add\cat C_q} (δ, δ) + ….
\end{equation*}
The curvature $ μ^0_{X, \Add\cat C_q} $ can be spelled out more concretely as the sum of the curvatures of all constituents $ A_i $ of $ X $. The differential $ μ^1_{\Add\cat C_q} (δ) $ is concretely the sum of the deformed differentials $ μ^1_{\cat C_q} $ applied to each entry of $ δ $ as a matrix. Whatever the value of $ μ^0_{(X, δ)} $ adds up to, the reader can see that it does typically not vanish because the twisted differential $ δ ∈ \Hom^1 (X, X) $ only satisfies the Maurer-Cartan equation with respect to the non-deformed differential $ μ^1_{\cat C} $.
\end{remark}

\begin{remark}
A popular way to form twisted completions of curved categories is to pick only curvature-free twisted complexes. This might be the way to luck in case of curved $ A_∞ $-categories, because a curved twisted completion cannot be passed to the minimal model. In contrast, for $ A_∞ $-deformations, curvature on $ \Tw\cat C_q $ is not problematic at all. Picking all twisted complexes of $ \cat C $ is in fact necessary in order to render $ \Tw\cat C_q $ a deformation of $ \Tw\cat C $.
\end{remark}

\subsection{Functors}
\label{sec:curved-functors}
In this section, we define the notion of functors between deformed $ A_∞ $-categories. These functors serve as a framework for gauge equivalences, quasi-equivalences and pushforwards of $ A_∞ $-deformations. This class of functors is presumably known to experts. Our sign conventions are those of \cite{HKK} and \cite[Section 3.1.4/symplectic]{Bocklandt-book}.

Based on the classical definition of $ A_∞ $-functors which we recalled in \autoref{def:prelim-ainfty-functor}, we are now ready to explain the natural extension to the deformed case. We fix the definition as follows:

\begin{definition}
\label{def:curved-functors-def}
Let $ \cat C, \cat D $ be two $ A_∞ $-categories and $ \cat C_q, \cat D_q $ deformations. A \emph{functor of deformed $ A_∞ $-categories} consists of a map $ F_q: \Ob(\cat C) → \Ob(\cat D) $ together with for every $ k ≥ 1 $ a $ B $-multilinear degree $ 1-k $ map
\begin{equation*}
F^k_q: \Hom_{\cat C_q} (X_k, X_{k+1}) ¤ … ¤ \Hom_{\cat C_q} (X_1, X_2) → \Hom_{\cat D_q} (F_q X_1, F_q X_{k+1})
\end{equation*}
and infinitesimal curvature $ F^0_{q, X} ∈ \mathfrak{m} \Hom^1_{\cat D} (F_q X, F_q X) $ for every $ X ∈ \cat C $, such that the curved $ A_∞ $-functor relations hold:
\begin{multline*}
\sum_{0 ≤ j ≤ i ≤ k} (-1)^{‖a_j‖ + … + ‖a_1‖} F_q (a_k, …, a_{i+1}, μ_q (a_i, …, a_{j+1}), a_j, …, a_1) \\
= \sum_{\substack{l ≥ 0 \\ 1 = j_1 ≤ … ≤ j_l ≤ k}} μ_q (F_q(a_k, …, a_{j_l}), …, F_q(…, a_{j_2}), F_q(…, a_{j_1})).
\end{multline*}
\end{definition}

\begin{remark}
A functor $ F_q $ of deformed $ A_∞ $-categories consists of maps between hom spaces which are allowed to have deformed (nonconstant) terms themselves. As we have seen in \autoref{sec:prelim-htensor}, the maps $ F_q^k $ are automatically continuous. Apart from the components $ F_q^{≥1} $, the functor is allowed to have infinitesimal curvature $ F^0_q $. This curvature is a feature of the deformed world where infinitesimal curvature is not only welcome, but is necessary. The first two curved $ A_∞ $-functor relations read
\begin{align*}
F^0_q + F^1_q (μ^0_{\cat C_q, X}) &= μ^1_{\cat D_q} (F^0_{q, X}), \\
F^1_q (μ^1_{\cat C_q} (a)) + (-1)^{‖a‖} F^2_q (μ^0_{\cat C_q, Y}, a) + F^2_q (a, μ^0_{\cat C_q, X}) &= μ^1_{\cat D_q} (F^1_q (a)) + μ^2_{\cat D_q} (F^0_{q, Y}, F^1_q (a)) \\
& \quad + μ^2_{\cat D_q} (F^1_q (a), F^0_{q, X}), \quad ∀ ~ a: X → Y.
\end{align*}
\end{remark}

We can interpret a functor $ F_q: \cat C_q → \cat D_q $ as an extension of an ordinary $ A_∞ $-functor $ \cat C → \cat D $. Indeed, when forgetting the terms of $ F = \{F_q^k\}_{k ≥ 0} $ in higher $ \mathfrak{m} $-adic order, we get a collection of maps $ F = \{F^k\}_{k ≥ 1} $. Since $ F_q $ satisfies the curved $ A_∞ $-functor relations, its reduction $ F $ satisfies the ordinary $ A_∞ $-functor relations. We fix this terminology as follows:

\begin{definition}
Let $ F_q: \cat C_q → \cat D_q $ be a functor of deformed $ A_∞ $-categories. Then its \emph{leading term} is the functor $ F: \cat C → \cat D $ obtained by dividing out the maximal ideal $ \mathfrak{m} $.
\end{definition}

Let us now define various notions of equivalences between deformed $ A_∞ $-categories. The aim is to transfer the notions of equivalences between $ A_∞ $-categories recalled in \autoref{def:prelim-ainfty-functorquasi} and \ref{def:prelim-ainfty-catquasi} to the world of deformed $ A_∞ $-categories. The main idea is to declare a functor $ F_q: \cat C_q → \cat D_q $ an isomorphism if its leading term $ F: \cat C → \cat D $ is an isomorphism. Note that this definition is not vacuous: It still requires the functor $ F_q $ to satisfy the (curved) $ A_∞ $-relations, but displays the isomorphism property as a side issue which is dealt with on the leading term part.

\begin{definition}
\label{def:curved-functors-quasiequivalence}
Let $ \cat C, \cat D $ be $ A_∞ $-categories and $ \cat C_q, \cat D_q $ deformations. Let $ F_q: \cat C_q → \cat D_q $ be a functor of deformed $ A_∞ $-categories and denote by $ F: \cat C → \cat D $ its leading term. Then $ F_q $ is
\begin{itemize}
\item an \emph{isomorphism} if $ F $ is an isomorphism,
\item a \emph{quasi-isomorphism} if $ F $ is a quasi-isomorphism,
\item a \emph{quasi-equivalence} if $ F $ is a quasi-equivalence,
\item a \emph{derived equivalence} if $ F $ is a derived equivalence.
\end{itemize}
Two deformed $ A_∞ $-categories $ \cat C_q $ and $ \cat D_q $ are
\begin{itemize}
\item \emph{isomorphic} if there is an isomorphism $ F_q: \cat C_q → \cat D_q $,
\item \emph{quasi-isomorphic} if there is a quasi-isomorphism $ F_q: \cat C_q → \cat D_q $,
\item \emph{quasi-equivalent} if there is a quasi-equivalence $ F_q: \cat C_q → \cat D_q $,
\item \emph{derived equivalent} if $ \Tw\cat C_q $ and $ \Tw\cat D_q $ are quasi-equivalent.
\end{itemize}
\end{definition}

We explain in \autoref{th:curved-axioms-equivrelation} why relations such as quasi-equivalence and derived equivalence are equivalence relations among deformed $ A_∞ $-categories. Among the $ A_∞ $-deformations of one single category $ \cat C $, there is one further notion of equivalence, known as gauge equivalence:

\begin{definition}
\label{def:curved-gauge-functordef}
Let $ \cat C $ be an $ A_∞ $-category and $ \cat C_q, \cat C_q' $ be deformations. Then a \emph{gauge equivalence} between $ \cat C_q $ and $ \cat C_q' $ is a functor $ F_q: \cat C_q → \cat C_q' $ of deformed $ A_∞ $-categories whose leading term $ F: \cat C → \cat C $ is the identity.
\end{definition}

We elaborate gauge equivalence further in the context of the Hochschild DGLA in \autoref{sec:curved-gauge}.

\section{The lens of DGLAs}
\label{sec:lens}
In this section, we show how to abstractly transfer $ A_∞ $-deformations from one category onto another.

In \autoref{sec:curved-hochschild}, we recall how to view $ A_∞ $-deformations as Maurer-Cartan elements via the so-called Hochschild DGLA. In \autoref{sec:curved-gauge}, we recall the notion of gauge equivalence of Maurer-Cartan elements and explain its application in the specific case of $ A_∞ $-deformations. In \autoref{sec:curved-linfty}, we recall the generalization of DGLAs known as $ L_∞ $-algebras. In \autoref{sec:curved-axioms}, we explain how to push forward deformations between quasi-equivalent categories.

For the interpretation of gauge equivalences and pushforwards of deformations in terms of $ L_∞ $-theory we will not provide any proof. Rather, we state in \autoref{sec:curved-axioms} a collection of axioms on the Hochschild DGLA which we will just assume in this paper.

The material in this section is not original. The Hochschild DGLA is classical at least in the case of ordinary algebras. Keller proved its invariance under quasi-equivalence in the dg case in \cite{Keller}.

Instead, this section serves to support a very specific viewpoint on $ A_∞ $-deformations. While there is widespread belief that curvature is a hindrance, we want to demonstrate here that it is at least possible to live peacefully with infinitesimal curvature. Let us present which section supports which claim: From \autoref{sec:prelim-ainfty-twisted} we learn that the twisted completion of an $ A_∞ $-deformation can be formed even under the presence of infinitesimal curvature. This construction is entirely natural in the sense that the deformed twisted completion is a deformation of the twisted completion of the non-deformed $ A_∞ $-category. From \autoref{sec:curved-hochschild} we learn that infinitesimal curvature is inevitable for a good notion of $ A_∞ $-deformations. The core summary is that curvature does not hurt for constructions with $ A_∞ $-categories because it enters the picture automatically when a deformation is induced from one category to another.

\subsection{The Hochschild DGLA}
\label{sec:curved-hochschild}
In this section, we recall how to view $ A_∞ $-deformations from the DGLA point of view. The material is all known to experts and nicely shows how curvature enters the picture. Useful references include \cite[Chapter V]{Manetti} and \cite{Yekutieli}.

Before we recall the concept of DGLA and Maurer-Cartan elements, let us summarize the philosophy: The aim is to capture every deformation problem by a DGLA. The solutions of the deformation problem should then correspond to Maurer-Cartan elements of the DGLA. This empowers the mathematician to use the force of DGLA theory. Standard questions in DGLAs include: to find a quasi-isomorphism $ F: L → L' $ between two DGLAs, or to classify all Maurer-Cartan elements of $ L $ up to gauge equivalence.
\begin{center}
\begin{tikzpicture}
\path (0, 0) node (A) {deformation problem} (5, 0) node (DGLA) {DGLA};
\path[draw, ->] ($ (A.east)!0.2!(DGLA.west) $) to node[midway, above] {1. reformulation} ($ (A.east)!0.8!(DGLA.west) $);
\path[draw, ->, bend right=100, looseness=5] (DGLA.south east) to node[pos=0.7, above] {2. algebraic power} (DGLA.north east);
\path[draw, ->, bend left] (5, -0.5) to node[midway, above] {3. harvest} (1, -0.5);
\end{tikzpicture}
\end{center}

We are now ready to recall the notion of DGLAs and Maurer-Cartan elements.

\begin{definition}
\label{def:curved-hochschild-DGLA}
A \emph{DG Lie algebra} (DGLA) is a $ ℤ $- or $ ℤ/2ℤ $-graded vector space $ L $ together with
\begin{itemize}
\item a differential $ d: L^i → L^{i+1} $,
\item a bracket $ [-, -]: L × L → L $ of degree zero,
\end{itemize}
satisfying skew-symmetry, Leibniz rule and Jacobi identity:
\begin{align*}
& [a, b] = (-1)^{|a||b| + 1} [b, a], \\
& d([a, b]) = [da, b] + (-1)^{|a|} [a, db], \\
& (-1)^{|a||c|} [a, [b, c]] + (-1)^{|b||a|} [b, [c, a]] + (-1)^{|c||b|} [c, [a, b]] = 0.
\end{align*}
\end{definition}

For example, the bracket $ [-, -] $ is commutative on odd elements. With this consideration, we recall the definition of Maurer-Cartan elements:

\begin{definition}
\label{def:curved-hochschild-MC}
Let $ L $ be a DGLA and $ B $ a deformation base. Regard the tensored DGLA $ B \htensor L $ with differential $ d $ and bracket $ [-, -] $ simply extended continuously. A \emph{Maurer-Cartan element} of $ L $ over $ B $ is an element $ ν ∈ B \htensor L^1 $ which satisfies the Maurer-Cartan equation
\begin{equation*}
dν + \frac{1}{2} [ν, ν] = 0.
\end{equation*}
The set of Maurer-Cartan elements of $ L $ over $ B $ is denoted $ \MC(L, B) $. In case the DGLA $ L $ is $ ℤ/2ℤ $-graded, a Maurer-Cartan element is supposed to lie in $ B \htensor L^{\odd} $.
\end{definition}

In the rest of this section, we will make sense of these definitions in the case of the so-called Hochschild DGLA:

\begin{definition}
\label{def:curved-hochschild-HC}
Let $ \cat C $ be a $ ℤ $- or $ ℤ/2ℤ $-graded $ A_∞ $-category. Then its Hochschild complex $ \HC(\cat C) $ is the graded vector space
\begin{equation*}
\HC(\cat C) = \prod_{\substack{X_1, …, X_{k+1} ∈ \cat C \\ k ≥ 0}} \Hom\big(\Hom_{\cat C} (X_k, X_{k+1})[1] \tensor … \tensor \Hom_{\cat C} (X_1, X_2)[1], ~ \Hom_{\cat C} (X_1, X_{k+1})[1]\big).
\end{equation*}
Here $ [1] $ denotes the left-shift and $ ‖a‖ = |a| - 1 $ denotes the reduced degree of a morphism $ a ∈ \Hom_{\cat C} (X, Y) $. The grading $ ‖·‖ $ on $ \HC(\cat C) $ is the one induced from the shifted degrees of the hom spaces of $ \cat C $. In other words, we have
\begin{equation*}
‖η(a_k, …, a_1)‖ = ‖η‖ + ‖a_k‖ + … + ‖a_1‖, \quad η ∈ \HC(\cat C).
\end{equation*}
For $ η, ω ∈ \HC(\cat C) $, the Gerstenhaber product $ μ · ω ∈ \HC(\cat C) $ is defined as
\begin{equation*}
(η · ω) (a_k, …, a_1) = \sum (-1)^{(∥a_l∥ + … + \Vert a_1 \Vert)∥ω∥} η(a_k, …, ω(…), a_l, …, a_1).
\end{equation*}
The \emph{Hochschild DGLA} is the following $ ℤ $- or $ ℤ/2ℤ $-graded DGLA structure on $ \HC(\cat C) $: The bracket on $ \HC(\cat C) $ is the Gerstenhaber bracket
\begin{equation*}
[η, ω] = η · ω - (-1)^{‖ω‖ ‖η‖} ω · η.
\end{equation*}
The differential on $ \HC(\cat C) $ consists of commuting with the product $ μ_{\cat C} ∈ \HC^1 (\cat C) $:
\begin{equation*}
dν = [μ_{\cat C}, ν].
\end{equation*}
\end{definition}

\begin{remark}
It is not hard to check that $ \HC(\cat C) $ is indeed a DGLA. The reader who wishes to perform the computation is advised to write all double brackets in terms of Gerstenhaber products and use the associator relation
\begin{align*}
& (a·b)·c - a·(b·c) \\
&= \sum (-1)^{‖b‖ (‖a_1‖ + … + ‖a_i‖) + ‖c‖ (‖a_1‖ + … + ‖a_j‖) + ‖b‖‖c‖} a(…, b(…), a_i, …, c(…), a_j, …) \\
& + \sum (-1)^{‖b‖(‖a_1‖ + … + ‖a_j‖) + ‖c‖(‖a_1‖ + … + ‖a_i‖)} a(…, c(…), a_i, …, b(…), a_j, …)).
\end{align*}
The sum in these formulas runs over all ways to insert $ b $ and $ c $ into $ a $. Despite the way the formulas are written, it is not necessary that any elements $ a_i $ or $ a_j $ actually lie in between or behind $ b $ and $ c $. This is merely an artifact needed to define the sign right: the sign shall be the total reduced degree of all elements coming after $ b $ or after $ c $, respectively.
\end{remark}

\begin{remark}
In the terminology of the Hochschild DGLA $ \HC(\cat C) $, we can interpret $ A_∞ $-deformations of $ \cat C $ precisely as Maurer-Cartan elements of $ \HC(\cat C) $. We start by observing that the $ A_∞ $-product $ μ_{\cat C} $ amounts to an element $ μ_{\cat C} ∈ \HC^1 (\cat C) $ and the $ A_∞ $-relations translate to $ μ_{\cat C} · μ_{\cat C} = 0 $.

Now let $ \cat C_q $ be a deformation of $ \cat C $. Then the curved $ A_∞ $-relations for $ μ_{\cat C_q} $ translate to $ μ_{\cat C_q} · μ_{\cat C_q} = 0 $. Decompose $ μ_{\cat C_q} = μ_{\cat C} + ν $ as non-deformed part $ μ_{\cat C} $ plus deformation $ ν ∈ \mathfrak{m} \HC^1 (\cat C) $. Given that $ μ_{\cat C} $ already satisfies the $ A_∞ $-relation $ μ_{\cat C} · μ_{\cat C} = 0 $, the element $ ν $ itself satisfies the Maurer-Cartan equation
\begin{equation*}
0 = (μ_{\cat C} + ν) · (μ_{\cat C} + ν) = dν + [ν, ν]/2.
\end{equation*}
Conversely, pick a Maurer-Cartan element $ ν ∈ \MC(\HC(\cat C), B) $. According to \autoref{th:prelim-htensor-autocontinuous}, the element $ μ_{\cat C} + ν $ extends in a multilinear and $ \mathfrak{m} $-adically continuous way to a collection of mappings
\begin{equation*}
μ_{\cat C_q}^{k≥0}: (B \htensor \Hom_{\cat C} (X_k, X_{k+1})) ¤ … ¤ (B \htensor \Hom_{\cat C} (X_1, X_2)) → B \htensor \Hom_{\cat C} (X_1, X_{k+1}).
\end{equation*}
The Maurer-Cartan identity for $ ν $ makes that $ μ_{\cat C_q} $ satisfies the curved $ A_∞ $-relations.
\end{remark}

\begin{remark}
In case $ L $ is a $ ℤ/2ℤ $-graded DGLA, then Maurer-Cartan elements of $ L $ are by definition odd elements $ ν ∈ \mathfrak{m} L^{\odd} $ with $ dν + [ν, ν]/2 = 0 $. For example, our deformation $ \Gtl_q Q $ is only a $ ℤ/2ℤ $-graded deformation of $ \Gtl Q $. In the context of deformations, we have to view both $ \Gtl Q $ and its Hochschild DGLA $ \HC(\Gtl Q) $ as $ ℤ/2ℤ $-graded.
\end{remark}

\begin{remark}
For ordinary algebras, which are concentrated in degree zero and have vanishing higher products, the Hochschild cohomology is typically defined without the shifts. This results in a grading difference of $ 1 $ from what we present here. For example, the center of the algebra is the classical zeroth Hochschild cohomology. In our $ A_∞ $-setting, this cohomology will rather be found in degree $ -1 $.
\end{remark}

\subsection{Gauge equivalence}
\label{sec:curved-gauge}
In this section, we recall the notion of gauge equivalence. By virtue of algebraic deformation theory, we have two ways of defining this equivalence: via an explicit definition and via the Hochschild DGLA. Both notions are defined here in parallel. Useful references are \cite{Manetti} and \cite{Yekutieli}.

\begin{center}
\begin{tikzpicture}
\path (0, 0) node[align=center] (A) {\textbf{Gauge equivalence} \\ by functor $ F: \cat C_q → \cat C_q' $} (8, 0) node[align=center] (B) {\textbf{Gauge equivalence} \\ as MC elements $ μ_{\cat C_q} \sim μ_{\cat C_q'} $};
\path[draw, <->] ($ (A.east)!0.2!(B.west) $) -- ($ (A.east)!0.8!(B.west) $);
\end{tikzpicture}
\end{center}

Recall from \autoref{def:curved-gauge-functordef} that a gauge equivalence between two deformations $ \cat C_q $, $ \cat C_q' $ of an $ A_∞ $-category $ \cat C $ consists of a functor $ F: \cat C_q → \cat C_q' $ whose leading term is the identity.

\begin{remark}
The idea behind \autoref{def:curved-gauge-functordef} is that the set of automorphisms $ \cat C_q → \cat C_q' $ as deformed $ A_∞ $-categories is rather large. The leading term of an automorphism can be any automorphism of $ \cat C $, which is not interesting from the perspective of deformation theory. Therefore one restricts to those functors whose leading term is the identity on $ \cat C $.
\end{remark}

After the success of the notion of gauge equivalence throughout mathematics and physics, a definition has also been captured in the abstract DGLA approach. The idea here is that the gauging functor $ F_q $ can be viewed as an element of a “gauge group”. The infinitesimal action of this gauge group can be described purely in terms of the DGLA:

\begin{definition}
\label{def:curved-gauge-def}
Let $ L $ be a DGLA and $ B $ a deformation base. Then there is a group action by $ \exp(\mathfrak{m} L^0) $ on $ B \htensor L^1 $ with infinitesimal generator
\begin{equation*}
φ.ν = dφ + [ν, φ] ∈ B \htensor L^1, \quad ν ∈ \mathfrak{m} L^0.
\end{equation*}
The group action preserves the set of Maurer-Cartan elements $ \MC(L; B) $. Two Maurer-Cartan elements $ ν, ν' ∈ \MC(L; B) $ are \emph{gauge-equivalent} if they lie in the same orbit. The set of Maurer-Cartan elements up to gauge-equivalence is denoted $ \MCb(L; B) $.
\end{definition}

\begin{remark}
In case $ L $ is a $ ℤ/2ℤ $-graded DGLA, then the gauge group is $ \exp(\mathfrak{m} L^{\even}) $. This group acts on Maurer-Cartan elements of $ L $, which are by definition odd elements $ ν ∈ \mathfrak{m} L^{\odd} $ with $ dν + [ν, ν]/2 = 0 $.
\end{remark}

Let us compare the infinitesimal generators of the action, at the non-deformed product $ μ_{\cat C} $: An “infinitesimal functor” $ \Id + εF $ pushes the non-deformed product $ μ_{\cat C} $ to some $ μ' $ such that $ (\Id + εF) · μ_{\cat C} = μ' ∘ (\Id + εF) $. Here $ ∘ $ denotes functor composition, in contrast to the Gerstenhaber product “$ · $”. Setting $ ε^2 = 0 $, we read off
\begin{equation*}
μ' = μ_{\cat C} + ε [μ_{\cat C}, F].
\end{equation*}
Meanwhile, the trivial deformation $ μ_{\cat C} $ corresponds to the zero Maurer-Cartan element in $ \MC(\HC(\cat C)) $. Gauging it by $ εF $ under $ ε^2 = 0 $ gives the element
\begin{equation*}
(εF).0 = 0 + d(εF) + [0, εF] = ε [μ_{\cat C}, F].
\end{equation*}
This element corresponds to the deformation $ μ_{\cat C} + ε [μ_{\cat C}, F] $. We see that gauging a deformation by a gauge functor $ \Id + εF $ is the same as gauging its corresponding Maurer-Cartan element in the Hochschild DGLA:
\begin{center}
\begin{tikzpicture}
\path (0, 0) node[align=center] (A) {\textbf{Infinitesimal functor} $ \Id + εF $ \\ $ μ' = μ_{\cat C} + ε [μ_{\cat C}, F] $} (9, 0) node[align=center] (B) {\textbf{Infinitesimal DGLA gauge} $ εF $ \\ $ μ' = μ_{\cat C} + (d(εF) + [0, εF]) $};
\path[draw, <->] ($ (A.east) + (1, 0) $) -- ($ (B.west) + (-1, 0) $);
\end{tikzpicture}
\end{center}
More precisely, two deformations are gauge equivalent in the sense of \autoref{def:curved-gauge-functordef} if and only if their corresponding Maurer-Cartan elements are gauge equivalent in the sense of \autoref{def:curved-gauge-def}.

\begin{remark}
$ A_∞ $-Hochschild cohomology of $ \cat C $ is defined as the cohomology of $ \HC(\cat C) $, merely regarded as a cochain complex instead of DGLA. This way, Hochschild cohomology is the linear approximation of Maurer-Cartan elements up to gauge equivalence.
\end{remark}

\subsection{$ L_∞ $-algebras}
\label{sec:curved-linfty}
In this section we recall the formalism of $ L_∞ $-algebras. In our context of $ A_∞ $-categories, we namely want to transport deformations from one category to another, so that one needs morphisms between their Hochschild DGLAs. The world of DGLAs and their morphisms is quite restrictive, but there is a more flexible version of DGLAs known as $ L_∞ $-algebras. We finish this section with a definition and brief discussion of the $ L_∞ $-theory. We follow version 3 of \cite{Barmeier-Wang}. The reader be cautioned that version 4 of that paper has changed signs.

\begin{definition}
An \emph{$ L_∞ $-algebra} is a graded vector space $ L $ together with multilinear maps
\begin{equation*}
l_k: \underbrace{L × … × L}_{k \text{ times}} → L
\end{equation*}
of degree $ 2-k $ which are graded skew-symmetric and satisfy the higher Jacobi identities:
\begin{align*}
&l_k (x_{s(1)}, …, x_{s(k)}) = χ(s) l_k (x_1, …, x_k), \\
&\sum_{\substack{i+j = k+1 \\ i, j ≥ 1}} \sum_{s ∈ S_{i, k-i}} (-1)^{i(n-i)} χ(s) l_j (l_i (x_{s(1)}, …, x_{s(i)}), x_{s(i+1)}, …, x_{s(k)}) = 0.
\end{align*}
Here $ S_{i, k-i} $ denotes the set of shuffles, i.e.~$ s ∈ S_k $ with $ s(1) < … < s(i) $ and $ s(i+1) < … < s(k) $. The sign $ χ(s) $ is the product of the signum of $ s $ and the Koszul sign of $ s $. The Koszul sign of a transposition $ (i ~ j) $ is $ (-1)^{|x_i| |x_j|} $, and this rule is extended multiplicatively to arbitrary permutations.
\end{definition}

With this sign convention, a DGLA as in \autoref{def:curved-hochschild-DGLA} is simply an $ L_∞ $-algebra without higher products. In particular, the Hochschild DGLA can automatically be regarded as an $ L_∞ $-algebra.

Morphisms between $ L_∞ $-algebras are indeed more flexible than between DGLAs: $ L_∞ $-morphisms are allowed to have higher components, just like $ A_∞ $-functors allow for higher components.

\begin{definition}
A \emph{morphism of $ L_∞ $-algebras} $ φ: L → L' $ is given by multilinear maps
\begin{equation*}
φ^k: L × … × L → L
\end{equation*}
of degree $ 1-k $ for all $ k ≥ 1 $ such that $ φ(x_{s(1)}, …, x_{s(k)}) = χ(s) φ(x_1, …, x_k) $ for any $ s ∈ S_k $ and
\begin{align*}
&\sum_{\substack{i+j = k+1 \\ i, j ≥ 1}} \sum_{s ∈ S_{i, n-i}} (-1)^{i(k-i)} χ(s) φ(l(x_{s(1)}, …, x_{s(i)}), x_{s(i+1)}, …, x_{s(k)}) \\
&= \sum_{\substack{1 ≤ r ≤ k \\ i_1 + … + i_r = k}} \sum_t (-1)^u χ(t) l'_r (φ(x_{t(1)}, …, x_{t(i_1)}), …, φ(x_{t(i_1 + … + i_{r-1} + 1)}, …, x_{t(k)})),
\end{align*}
where $ t $ runs over all $ (i_1, …, i_r) $-shuffles for which
\begin{equation*}
t(i_1 + … + i_{l-1} + 1) < t(i_1 + … + i_l + 1).
\end{equation*}
and $ u = (r-1) (i_1 - 1) + … + 2 (i_{r-2} - 1) + (i_{r-1} - 1) $. A morphism $ φ: L → L' $ of $ L_∞ $-algebras is a \emph{quasi-isomorphism} if $ φ^1 $ is a quasi-isomorphism of complexes.
\end{definition}

\begin{definition}
Let $ L $ be an $ L_∞ $-algebra and $ B $ a deformation base. Then a \emph{Maurer-Cartan element} is an element $ x ∈ \mathfrak{m} \htensor L^1 $ satisfying the Maurer-Cartan equation
\begin{equation*}
\sum_{k ≥ 1} \frac{l_k (x, …, x)}{k!} = 0.
\end{equation*}
We write $ \MC(L, B) $ for the set of Maurer-Cartan elements of $ L $ over $ B $.
\end{definition}

In contrast to the DGLA case, there is no gauge group acting on the Maurer-Cartan elements. Instead, one should regard an equivalence relation of “homotopy”. All we should assume here is that the notion of homotopy exists and gives rise to a quotient set $ \MCb(L, B) $, just as in the DGLA case.

\subsection{Axioms on $ A_∞ $-deformations}
\label{sec:curved-axioms}
There is a slight gap in our treatment of $ A_∞ $-deformations: We cannot prove invariance of the Hochschild DGLA under quasi-equivalences. While derived invariance is known in the dg case due to \cite{Keller}, according to private communication with Keller there is no literature available for the $ A_∞ $-case. We do not fill the gap here. Instead, we state a small set of axioms in this section which we will simply assume for the purpose of \autoref{sec:uncurving-theory} and \ref{sec:constructions-minmodel}.

The motivation for our axioms is that we need to be able to push deformations from one category to another. Assume two $ A_∞ $-categories $ \cat C $ and $ \cat D $ are quasi-equivalent by means of a quasi-equivalence $ F: \cat C → \cat D $. Intuition says that a deformation $ \cat C_q $ of $ \cat C $ can be “pushed” via $ F $ to a deformation $ \cat D_q = F_* (\cat C_q) $ of $ \cat D $ such that $ \cat C_q $ and $ \cat D_q $ still quasi-equivalent to each other by a deformation of the functor $ F $. We formalize this notion as follows:

\begin{definition}
Let $ \cat C $ be an $ A_∞ $-category and $ \cat C_q $ be a deformation. Let $ F: \cat C → \cat D $ be a quasi-equivalence. Then we call any deformation $ \cat D_q $ of $ \cat D $ a \emph{naive pushforward} of the deformation $ \cat C_q $ along $ F $ if there exists a functor $ F_q: \cat C_q → \cat D_q $ with leading term $ F $.
\end{definition}

\begin{remark}
If forming Hochschild DGLAs were functorial, pushforwards would be easy. Indeed, a quasi-equivalence $ \cat C → \cat D $ would ideally induce a quasi-isomorphism of $ L_∞ $-algebras $ \HC(\cat C) → \HC(\cat D) $ and therefore a bijection $ \MCb(\cat C, B) → \MCb(\cat D, B) $ of Maurer-Cartan elements. The naive pushforward of $ \cat C_q $ would then simply be obtained as the image under this map of the Maurer-Cartan elemement defined by $ \cat C_q $. It is an awkward fact of algebra that however neither the Hochschild DGLA nor Hochschild cohomology is functorial. For instance, even the center $ \HH^0 (A) = Z(A) $ of an algebra $ A $ is not functorial a property. Two quasi-equivalent $ A_∞ $-categories however have quasi-isomorphic Hochschild DGLAs. At least, this is a folklore statement, with actual proof only available by Keller in the dg case \cite{Keller}.
\end{remark}

This definition of naive pushforwards raises many detail questions. For instance, let $ G: \cat D → \cat E $ be yet another quasi-equivalence. Then we are interested in the question whether the double pushforward of $ \cat C_q $ along $ F $ and $ G $ is gauge-equivalent to the single pushforward of $ \cat C_q $ along $ GF $. We shall provide answer to this question in the form of axioms:

\begin{convention}
\label{th:curved-axioms-convention}
We assume the following axioms regarding the Hochschild DGLA: Let $ B $ be a fixed deformation base. Then:
\begin{itemize}
\item[(A1)] Two $ A_∞ $-deformations $ \cat C_q, \cat C_q' $ over $ B $ are gauge equivalent if and only if they are gauge equivalent as Maurer-Cartan elements of $ \HC(\cat C) $.
\item[(A2)] Let $ F: \cat C → \cat D $ be a quasi-equivalence of $ A_∞ $-categories. Then there exists a quasi-isomorphism of $ L_∞ $-algebras $ F_*: \HC(\cat C) → \HC(\cat D) $.
\item[(A3)] We call $ F_* $ the pushforward of $ F $. The pushforward is noncanonical. However its induced map $ (F_*)^{\MC}: \MCb(\HC(\cat C), B) → \MCb(\HC(\cat D), B) $ has the following property: $ \cat D_q $ is a naive pushforward of $ \cat C_q $ along $ F $ if and only if $ μ_{\cat D_q} = F_*^{\MC} (μ_{\cat C_q}) $.
\item[(A4)] Let $ \cat C ⊂ \cat D $ be a subcategory such that the inclusion $ i: \cat C → \cat D $ is a quasi-equivalence. Then $ i_*^{\MC} (μ_{\cat D_q} \restr{\cat C}) = μ_{\cat D_q} $.
\item[(A5)] Push-forward is functorial on Maurer-Cartan elements: $ (GF)_*^{\MC} (μ_{\cat C_q}) = G_*^{\MC} (F_*^{\MC} (μ_{\cat C_q}) $.
\end{itemize}
\end{convention}

\begin{remark}
There is a slight abuse of the notation in \autoref{th:curved-axioms-convention}. Where we have written $ F_*^{\MC} (μ_{\cat C_q}) $, we actually mean the Maurer-Cartan element $ μ_{\cat C_q} - μ_{\cat C} $ instead of $ μ_{\cat C_q} $. In fact, the element $ μ_{\cat C_q} ∈ B \htensor \HC^1 (\cat C) $ is not a Maurer-Cartan element itself. In similar abuse of notation, we may occasionally write $ F_*^{\MC} (\cat C_q) $ instead of $ F_*^{\MC} (μ_{\cat C_q}) $.
\end{remark}

As an application of \autoref{th:curved-axioms-convention}, we show here that quasi-equivalence of deformed $ A_∞ $-categories is an equivalence relation. By quasi-equivalence of two deformed $ A_∞ $-categories, we refer to the relation defined in \autoref{def:curved-functors-quasiequivalence}. A proof without assuming the axioms would either need to deal with complicated explicit constructions, or make use of an ∞-categorical level. In other words, assuming the axioms seems to be a healthy midway for the scope of the paper.

\begin{lemma}
\label{th:curved-axioms-equivrelation}
Quasi-equivalence of deformed $ A_∞ $-categories is an equivalence relation. Even stronger, for every quasi-equivalence $ F: \cat C → \cat D $ there exists a quasi-equivalence $ G: \cat D → \cat C $ such that $ G_* F_* = \id $. Derived equivalence is an equivalence relation as well.
\end{lemma}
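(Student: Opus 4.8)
The plan is to obtain reflexivity and transitivity of quasi-equivalence straight from \autoref{def:curved-functors-quasiequivalence}, to prove the ``even stronger'' symmetry claim using the axioms of \autoref{th:curved-axioms-convention} together with the classical fact that $A_\infty$-quasi-equivalences are invertible up to natural isomorphism, and finally to deduce the statement on derived equivalences formally. \emph{Reflexivity} is immediate, since $\id_{\cat C_q}$ has leading term $\id_{\cat C}$, which is a quasi-equivalence. For \emph{transitivity}, given quasi-equivalences $F_q \colon \cat C_q \to \cat D_q$ and $G_q \colon \cat D_q \to \cat E_q$, the composite $G_q F_q$ — defined by the evident deformed analogue of the composition formula of \autoref{def:prelim-ainfty-functor}, with the curvature terms $F^0_q, G^0_q$ inserted — is again a functor of deformed $A_\infty$-categories, and a routine check shows its leading term equals the composite $GF$ of the leading terms; since a composite of quasi-equivalences of $A_\infty$-categories is a quasi-equivalence, $G_q F_q$ is a quasi-equivalence of deformed $A_\infty$-categories.

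For \emph{symmetry}, fix a quasi-equivalence $F \colon \cat C \to \cat D$. By the classical theory of $A_\infty$-categories there is a quasi-equivalence $G \colon \cat D \to \cat C$ together with natural isomorphisms $GF \cong \id_{\cat C}$ and $FG \cong \id_{\cat D}$. By axiom (A2) each of $F$, $G$, $GF$, $FG$ has a Hochschild pushforward, and I claim $G_*^{\MC} \circ F_*^{\MC} = \id$ on $\MCb(\HC(\cat C), B)$. Indeed, $\id_{\cat C}$ lifts to the identity functor of an arbitrary deformation, so axiom (A3) forces $(\id_{\cat C})_*^{\MC} = \id$; moreover the $\MCb$-level pushforward of a quasi-equivalence is, through axiom (A3), determined by its set of naive pushforwards, so naturally isomorphic quasi-equivalences induce the same map on $\MCb$, whence $(GF)_*^{\MC} = (\id_{\cat C})_*^{\MC} = \id$. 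Axiom (A5) identifies $(GF)_*^{\MC}$ with $G_*^{\MC} \circ F_*^{\MC}$, and symmetrically $F_*^{\MC} \circ G_*^{\MC} = \id$, so $F_*^{\MC}$ and $G_*^{\MC}$ are mutually inverse bijections.

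Now let $F_q \colon \cat C_q \to \cat D_q$ be any quasi-equivalence of deformed $A_\infty$-categories with leading term $F$. Since $\cat D_q$ is a naive pushforward of $\cat C_q$ along $F$, axiom (A3) gives $\mu_{\cat D_q} = F_*^{\MC}(\mu_{\cat C_q})$, hence $G_*^{\MC}(\mu_{\cat D_q}) = \mu_{\cat C_q}$, hence — again by axiom (A3) — there is a functor $G_q \colon \cat D_q \to \cat C_q$ with leading term $G$, which is a quasi-equivalence of deformed $A_\infty$-categories. This establishes symmetry, and $G_*^{\MC} F_*^{\MC} = \id$ is exactly the asserted strengthening. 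Finally, \emph{derived equivalence} of deformed $A_\infty$-categories is by definition quasi-equivalence of the twisted completions $\Tw\cat C_q$, $\Tw\cat D_q$ (which are well-defined deformed $A_\infty$-categories by \autoref{def:curved-twisted}), so reflexivity, symmetry and transitivity of derived equivalence follow immediately from those of quasi-equivalence applied to $\Tw\cat C_q$, $\Tw\cat D_q$, $\Tw\cat E_q$.

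The step I expect to be the main obstacle is the subtle input used above: that naturally isomorphic $A_\infty$-functors induce the same map $\MCb(\HC(\cat C), B) \to \MCb(\HC(\cat D), B)$. This is not one of the literal axioms of \autoref{th:curved-axioms-convention}, but it is the expected compatibility of axiom (A3) with the homotopy theory of $A_\infty$-functors; the cleanest way to secure it is to refine axiom (A2) to the statement that isomorphic $A_\infty$-functors induce $L_\infty$-homotopic morphisms of Hochschild complexes, and then invoke the standard fact that $L_\infty$-homotopic morphisms act identically on Maurer-Cartan moduli. Everything else in the argument is bookkeeping with the axioms and the standard invertibility of $A_\infty$-quasi-equivalences in the homotopy category.
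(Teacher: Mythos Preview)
Your argument is clean in spirit, but the gap you flag yourself is real and is precisely what the paper's proof is engineered to avoid. The assertion that naturally isomorphic quasi-equivalences induce the same map on $\MCb$ does not follow from (A1)--(A5) as stated: axiom (A3) characterizes $F_*^{\MC}$ via naive pushforwards, but to conclude $(GF)_*^{\MC} = (\id_{\cat C})_*^{\MC}$ you would need that whenever a deformation $\cat C_q'$ admits a functor to $\cat C_q$ with leading term $GF$, it also admits one with leading term $\id_{\cat C}$, i.e.\ a gauge equivalence. That requires lifting the natural isomorphism $GF \cong \id_{\cat C}$ to the deformed level, which is neither among the axioms nor obviously derivable from them. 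Your proposed fix (refining (A2) to an $L_\infty$-homotopy statement) is reasonable but changes the hypotheses of the lemma.

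The paper takes a different route that stays strictly within (A1)--(A5). It defines the class $\mathcal{F}$ of quasi-equivalences $F$ admitting some $G$ with $G_* F_* = \id$, and proves three structural facts: (i) if $F \in \mathcal{F}$ via $G$, then automatically $F_* G_* = \id$ and $G \in \mathcal{F}$ (since $F_*$, $G_*$ are bijections); (ii) a strong two-out-of-three: $F, G \in \mathcal{F} \Leftrightarrow GF \in \mathcal{F}$; (iii) $\mathcal{F}$ contains all functors with a \emph{strict} one-sided inverse, inclusions of skeletal subcategories, and the maps $\H\cat C \leftrightarrows \cat C$. The point is that these seed functors have honest compositional identities rather than mere natural isomorphisms, so (A5) applies directly. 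The paper then factors an arbitrary quasi-equivalence $F: \cat C \to \cat D$ through minimal models and skeleta as $\pi_{\cat D} F i_{\cat C} = J F'' I^{-1}$ with $F''$ an isomorphism, and repeatedly applies (ii) and (iii) to conclude $F \in \mathcal{F}$. Your approach would be shorter if the extra axiom were available; the paper's approach trades that for a longer but self-contained bootstrapping argument.
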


\begin{proof}
We need to prove reflexivity, transitivity and symmetry of the quasi-equivalence relation. The first two properties are easy: The identity functor obviously provides a quasi-equivalence from any deformation $ \cat C_q $ to itself. And if $ F_q: \cat C_q → \cat D_q $ and $ G_q: \cat D_q → \cat E_q $ are quasi-equivalences, then the composition $ G_q F_g: \cat C_q → \cat E_q $ is a quasi-equivalence as well. We conclude that only symmetry remains to be proven.

The proof of symmetry consists of five steps: First, we define a set of “good” quasi-equivalences $ F $ for which there exists a quasi-equivalence $ G $ with $ G_* F_* = \id $. The second, third and fourth step establish basic properties of this “good” set. In the fifth step, we show that those properties already make every quasi-equivalence lie in $ \mathcal{F} $.

As step 1, let us recall our context. We are interested in the set of quasi-equivalences $ F: \cat C → \cat D $ such that there exists a quasi-equivalence $ G: \cat D → \cat C $ with $ G_* F_* = \id $. Denote this set by $ \mathcal{F} $:
\begin{equation*}
\mathcal{F} ≔ \{F: \cat C → \cat D \text{ q.e.} \running ∃ G: \cat D → \cat C \text{ q.e.}: G_* F_* = \id\}.
\end{equation*}
Our aim is to show that any quasi-equivalence lies in $ \mathcal{F} $. Steps 2, 3, 4 are devoted to proving several properties of $ \mathcal{F} $.

As step 2, we prove the property
\begin{equation*}
F ∈ \mathcal{F} \text{ with } G \text{ q.e. such that } G_* F_* = \id \quad \Longrightarrow \quad F_* G_* = \id \text{ and } G ∈ \mathcal{F}.
\end{equation*}
Indeed, pick $ F $ as on the left-hand side. Since both $ F $ and $ G $ are quasi-equivalences, both pushforwards $ F_* $ and $ G_* $ are bijections. Together with $ G_* F_* = \id $, we conclude that $ G_* $ and $ F_* $ are simply inverses to each other. In other words, $ F_* G_* = \id $ holds as well. We conclude that $ G ∈ \mathcal{F} $.

As step 3, we prove for composable quasi-equivalences $ F, G $ the property
\begin{equation*}
F, G ∈ \mathcal{F} \quad \Longleftrightarrow \quad GF ∈ \mathcal{F}.
\end{equation*}
One should think of this as a strong version of the two-out-of-three property. To prove it, pick $ F, G ∈ \mathcal{F} $ with $ F'_* F_* = \id $ and $ G'_* G_* = \id $. We get $ (F'G')_* (GF)_* = F'_* F_* = \id $, which renders $ GF ∈ \mathcal{F} $. Conversely assume $ GF ∈ \mathcal{F} $ with $ H_* (GF)_* = \id $. Then $ (HG)_* F_* = \id $, hence $ F ∈ \mathcal{F} $. Step 2 implies $ F_* (HG)_* = \id $. In other words $ (FH)_* G_* = \id $, hence $ G ∈ \mathcal{F} $. We conclude that both $ F $ and $ G $ lie in $ \mathcal{F} $, as desired.

As step 4, we show that the following lie in $ \mathcal{F} $:
\begin{align*}
&\text{quasi-equivalences with a one-sided inverse}, \\
&\text{inclusions of skeletal subcategories}, \\
&\text{inclusion } i: \H\cat C → \cat C \text{ and projection } π: \cat C → \H\cat C.
\end{align*}
To this end, assume $ F $ and $ G $ are quasi-equivalences with $ FG = \Id $. Combining $ \Id ∈ \mathcal{F} $ with step 3, we deduce $ F, G ∈ \mathcal{F} $. In particular, isomorphisms fall under this regime. Inclusions of skeletal subcategories also fall under this regime, since a skeletal subcategory $ S ⊂ \cat C $ produces a quasi-equivalence $ \cat C → S $ which reduces to the identity on $ S $. Now regard a category $ \cat C $ and its minimal model $ \H\cat C $. The minimal model is not unique, but every minimal model comes with quasi-isomorphisms $ i: \H\cat C → \cat C $ and $ π: \cat C → \H\cat C $. Regard the map $ πi: \H\cat C → \H\cat C $. Since both $ i $ and $ π $ are quasi-isomorphisms, $ πi $ is a quasi-isomorphism as well. Moreover, $ \H\cat C $ is already a minimal category, hence $ πi $ is an isomorphism. We conclude that $ πi ∈ \mathcal{F} $. By step 3, we deduce that both $ π $ and $ i $ lie in $ \mathcal{F} $. This finishes step 4.

As final step 5, we prove that any quasi-equivalence lies in $ \mathcal{F} $. To this end, let $ F: \cat C → \cat D $ be any quasi-equivalence. Our strategy is to build a diagram to whose arrows we can apply step 3 and 4 to deduce that $ F $ also lies in $ \mathcal{F} $. In order to write down the diagram, pick minimal models $ \H\cat C $ and $ \H\cat D $, together with inclusion map $ i_{\cat C}: \H\cat C → \cat C $ and projection $ π_{\cat D}: \cat D → \H\cat D $. Define $ F' ≔ π_{\cat D} F i_{\cat C} $. This gives a diagram, commutative by definition,
\begin{center}
\begin{tikzpicture}
\path (0, 0) node (C) {$ \cat C $} (1.5, 0) node (D) {$ \cat D $};
\path (0, -1.5) node (HC) {$ \H\cat C $} (1.5, -1.5) node (HD) {$ \H\cat D $};
\path[draw, ->] (C) -- (D) node[midway, above] {$ F $};
\path[draw, ->] (HC) -- (C) node[midway, left] {$ i_{\cat C} $};
\path[draw, ->] (D) -- (HD) node[midway, right] {$ π_{\cat D} $};
\path[draw, ->] (HC) -- (HD) node[midway, above] {$ F' $};
\end{tikzpicture}
\end{center}
Choose a skeletal subcategory $ S_{\cat C} ⊂ \H\cat C $ and set $ S_{\cat D} ≔ F'(S_{\cat C}) $. Then $ S_{\cat D} ⊂ \H\cat D $ is a skeletal subcategory as well: Any object $ X ∈ \H\cat D $ is isomorphic to some $ F'(Y) $, and $ Y $ in turn is isomorphic to some $ Z ∈ S_{\cat C} $, hence $ X ≅ F'(Z) ∈ S_{\cat D} $. Moreover if $ Y, Z ∈ S_{\cat C} $ and $ F'(Y) ≅ F'(Z) $, then $ Y ≅ Z $. This implies $ Y = Z $ because $ S_{\cat C} $ is a skeleton. In total, we conclude that $ S_{\cat D} ⊂ \H\cat D $ is a skeletal subcategory, and we obtain a restricted quasi-equivalence $ F'': S_{\cat C} → S_{\cat D} $. Putting everything together, we have the commutative diagram
\begin{center}
\begin{tikzpicture}
\path (0, 0) node (C) {$ \cat C $} (1.5, 0) node (D) {$ \cat D $};
\path (0, -1.5) node (HC) {$ \H\cat C $} (1.5, -1.5) node (HD) {$ \H\cat D $};
\path (0, -3) node (S) {$ S $} (1.5, -3) node (S') {$ S' $};
\path[draw, ->] (C) -- (D) node[midway, above] {$ F $};
\path[draw, ->] (HC) -- (C) node[midway, left] {$ i_{\cat C} $};
\path[draw, ->] (D) -- (HD) node[midway, right] {$ π_{\cat D} $};
\path[draw, ->] (HC) -- (HD) node[midway, above] {$ F' $};
\path[draw, {right hook}->] (S) -- (HC) node[midway, left] {$ I $};
\path[draw, {right hook}->] (S') -- (HD) node[midway, right] {$ J $};
\path[draw, ->] (S) -- (S') node[midway, above] {$ F'' $};
\end{tikzpicture}
\end{center}
Here we denoted by $ I $ and $ J $ the inclusion of the full subcategories $ S $ and $ S' $ into $ \H\cat C $ resp.~$ \H\cat D $. The top and bottom square are strictly commutative by definition of $ F' $ and $ F'' $.

We now count everything together: $ F'' $ is an isomorphism and $ J $ is an inclusion of a skeletal subcategory. By step 4, we get $ J, F'' ∈ \mathcal{F} $. By step 3, we get $ JF'' ∈ \mathcal{F} $. By commutativity of the bottom of the diagram we have $ F' I = JF'' $. By step 3, this implies $ F' ∈ \mathcal{F} $. By commutativity of the top of the diagram we have $ π_{\cat D} F i_{\cat C} = F' ∈ \mathcal{F} $. A double application of step 3 renders $ F ∈ \mathcal{F} $. Since $ F $ was arbitrary, this shows that all quasi-equivalences lie in $ \mathcal{F} $. In other words, for every $ F: \cat C → \cat D $ there exists a $ G: \cat D → \cat C $ such that $ G_* F_* = \id $. In particular, this shows quasi-equivalence is an equivalence relation.

Let us now explain why derived equivalence is an equivalence relation as well. Indeed, $ \cat C_q $ and $ \cat D_q $ derived equivalent according to \autoref{def:curved-functors-quasiequivalence} if $ \Tw\cat C_q $ and $ \Tw\cat D_q $ are quasi-equivalent. Since we have just shown that quasi-equivalence is an equivalence relation, we conclude that derived equivalence is an equivalence relation. This finishes the proof.
\end{proof}

\begin{remark}
Pushing deformations from one category to another is not only possible via quasi-equivalences. We can also push forward deformations from one category to a derived equivalent category. Namely, let $ F: \Tw\cat C → \Tw\cat D $ be a quasi-equivalence and let $ \cat C_q $ be a deformation of $ \cat C $. Then the twisted completion $ \Tw\cat C_q $ from \autoref{def:curved-twisted} is canonically a deformation of $ \Tw\cat C $. The pushforward $ F_*^{\MC} $ on Maurer-Cartan elements now transports this deformation $ \Tw\cat C_q $ to a deformation $ F_*^{\MC} (μ_{\Tw\cat C_q}) $ of $ \Tw\cat D $. Restricting this deformation to $ \cat D ⊂ \Tw\cat D $ gives a deformation of $ \cat D $. Via the route of twisted completion, pushforward and restriction, the deformation $ \cat D_q $ corresponds to the deformation $ \cat C_q $. We may equally call $ \cat D_q $ the pushforward of $ \cat C_q $ along $ F $.
\end{remark}

\section{Derived categories of $ A_∞ $-deformations}
\label{sec:constructions}
In this section, we define and prove existence of derived categories of $ A_∞ $-deformations. In \autoref{sec:constructions-twisted}, we revisit the notion of twisted completion from \autoref{sec:prelim-ainfty-twisted}. In \autoref{sec:constructions-minmodel}, we define minimal models for $ A_∞ $-deformations, in such a way that the minimal model $ \H\cat C_q $ of $ \cat C_q $ is a deformation of $ \H\cat C $. In \autoref{sec:constructions-derived}, we combine both steps and define the derived category of $ \cat C_q $ to be $ \H\Tw\cat C_q $, the minimal model of the twisted completion. In \autoref{sec:uncurving-theory} we comment on uncurving of $ A_∞ $-deformations and on uncurvable objects.

\subsection{Twisted completion}
\label{sec:constructions-twisted}
In this section, we revisit the twisted completion construction for deformed $ A_∞ $-categories from an abstract perspective. While in \autoref{sec:prelim-ainfty-twisted} we already defined $ \Tw\cat C_q $ in an ad-hoc way, in the present section we have the additional abstract operations of \autoref{sec:lens} available. We explain that $ \Tw\cat C_q $ is the pushforward of the deformation $ \cat C_q $ onto $ \Tw\cat C $ along the derived equivalence $ \cat C → \Tw\cat C $. This section is meant to illustrate that the story of $ \Tw\cat C_q $ is well-rounded and $ \Tw\cat C_q $ is correctly defined by \autoref{def:curved-twisted}.

To start with, let us recall the additive completion and twisted completion from \autoref{def:curved-twisted}. The additive completion $ \Add\cat C_q $ is the deformation of $ \Add\cat C $ whose deformed products are simply induced from those of $ \cat C_q $. The twisted completion $ \Tw\cat C_q $ is the deformation of $ \Tw\cat C_q $ whose deformed products are defined by embracing the deformed products of $ \cat C_q $ with the $ δ $-matrix. Note that the objects of $ \Tw\cat C_q $ are by definition the same as those of $ \Tw\cat C $, namely twisted complexes $ (A_1 [k_1] ⊕ … ⊕ A_n [k_n], δ) $ where $ δ $ satisfies the Maurer-Cartan equation with respect to $ μ_{\cat C} $. The category $ \Tw\cat C_q $ is a curved deformation and its curvature is given by the failure of $ δ $ to satisfy the Maurer-Cartan equation with respect to $ μ_{\cat C_q} $.

With the help of \autoref{def:curved-functors-quasiequivalence}, we can put the definition of $ \Tw\cat C_q $ into a better framing. The easiest way to make this precise is to note that the inclusion $ \cat C → \Tw\cat C $ is trivially a derived equivalence. The twisted completion $ \Tw\cat C_q $ satisfies precisely the deformed analog of this classical statement:

\begin{lemma}
The inclusion $ \cat C_q ⊂ \Tw\cat C_q $ is a derived equivalence of deformed $ A_∞ $-categories.
\end{lemma}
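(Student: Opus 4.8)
The plan is to reduce the statement to \autoref{def:curved-functors-quasiequivalence}, which declares a functor of deformed $A_\infty$-categories to be a derived equivalence exactly when its leading term is a derived equivalence of $A_\infty$-categories. So I would (i) exhibit the inclusion as a genuine functor of deformed $A_\infty$-categories, (ii) identify its leading term with the classical inclusion $\cat C \to \Tw\cat C$, and (iii) quote the classical fact that the latter is a derived equivalence.

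First I would write down the inclusion functor $\iota_q \colon \cat C_q \to \Tw\cat C_q$ explicitly: it sends an object $X$ to the one-term twisted complex $(X,0)$, its first component $\iota_q^1 = \id$ is the tautological identification $\Hom_{\cat C_q}(X,Y) = B \htensor \Hom_{\cat C}(X,Y) = \Hom_{\Tw\cat C_q}((X,0),(Y,0))$, and $\iota_q^{\geq 2} = 0$, $\iota_q^0 = 0$. To see that $\iota_q$ really is a functor of deformed $A_\infty$-categories in the sense of \autoref{def:curved-functors-def}, I would note that on objects with vanishing twisted differential the embracing-by-$\delta$ sum defining $\mu_{\Tw\cat C_q}$ collapses to its single $n_0 = \dots = n_k = 0$ term, i.e.\ $\mu_{\Add\cat C_q}$ applied to one-term complexes, where moreover all shift differences $l_j$ vanish, so the sign of \autoref{def:2Ainfty-ainfty-Add} is trivial; hence $\mu_{\Tw\cat C_q}^k$ restricted to the objects $(X_i,0)$ is literally $\mu_{\cat C_q}^k$, and likewise the curvature of $(X,0)$ in $\Tw\cat C_q$ is $\mu^0_{\cat C_q,X}$. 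Consequently the curved $A_\infty$-functor relations for $\iota_q$ are nothing but the $cA_\infty$-relations of $\cat C_q$, which hold by assumption.

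Next I would divide out $\mathfrak{m}$: this turns $\iota_q$ into the classical inclusion $\iota \colon \cat C \to \Tw\cat C$, $X \mapsto (X,0)$, $\iota^1 = \id$, which is therefore the leading term of $\iota_q$. It then remains to recall that $\iota$ is a derived equivalence, i.e.\ that $\H\Tw\iota \colon \H\Tw\cat C \to \H\Tw\Tw\cat C$ is an equivalence — the standard idempotency of the twisted completion, a quasi-inverse being the totalization functor $\Tw\Tw\cat C \to \Tw\cat C$ that flattens a twisted complex of twisted complexes into a single one. With the leading term of $\iota_q$ a derived equivalence, \autoref{def:curved-functors-quasiequivalence} immediately yields that $\iota_q$ is a derived equivalence of deformed $A_\infty$-categories, which is the claim.

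I do not expect a serious obstacle here: the only genuine computation is the degeneration of the $\delta$-embracing formula to $\mu_{\cat C_q}$ on one-term complexes with zero differential, curvature included, together with a check that the $\Add$-sign convention is inert in that case; everything else is either tautological or a quotation of the classical $\Tw$-idempotency. In particular there is nothing curvature-specific to worry about, since \autoref{def:curved-functors-quasiequivalence} only ever inspects the leading term.
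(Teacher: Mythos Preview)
Your proof is correct and follows essentially the same approach as the paper: exhibit the inclusion as a functor of deformed $A_\infty$-categories, identify its leading term with the classical inclusion $\cat C \to \Tw\cat C$, and invoke \autoref{def:curved-functors-quasiequivalence} together with the classical fact that this inclusion is a derived equivalence. Your version is simply more explicit where the paper says ``by definition'' and ``trivially''.
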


\begin{proof}
Due to our definition of quasi-equivalences of deformed $ A_∞ $-categories, this statement is trivial. Indeed, regard the inclusion functor $ F_q: \cat C_q → \Tw\cat C_q $. This is by definition a functor of deformed $ A_∞ $-categories. Its leading term is the standard inclusion $ F: \cat C → \Tw\cat C $. Ultimately, the functor $ F $ is a derived equivalence and we conclude that $ F_q $ is a derived equivalence according to \autoref{def:curved-functors-quasiequivalence}.
\end{proof}

In this section, let us go beyond the definition of $ \Tw\cat C_q $ and define a few more utilities. It is possible to define a variant $ \Tw'\cat C_q $ of the twisted completion of $ A_∞ $-deformations. It is not a deformation of $ \Tw\cat C $, but rather an object-cloning deformation in the sense of \autoref{def:prelim-ainfty-objectcloning}.

\begin{definition}
\label{th:twisted-deforming-delta}
Let $ \cat C $ be an $ A_∞ $-category and $ \cat C_q $ a deformation. The objects of $ \Tw'\cat C_q $ are be pairs
\begin{equation*}
(X, δ = δ_0 + δ'), \quad X ∈ \Add\cat C, \quad δ_0 ∈ \Hom_{\cat C}^1 (X, X), \quad δ' ∈ \mathfrak{m} \Hom_{\cat C}^1 (X, X).
\end{equation*}
Only the leading part $ δ_0 $ is required to be upper triangular and satisfy the Maurer-Cartan equation with respect to $ μ_{\cat C} $. The hom spaces and higher products of $ \Tw'\cat C_q $ shall be defined by embracing with $ δ $ as in \autoref{def:curved-twisted}.
\end{definition}

\begin{remark}
In an object $ (X, δ_0 + δ') ∈ \Tw'\cat C_q $, the infinitesimal part $ δ' $ can also lie below the diagonal. The category $ \Tw'\cat C_q $ is clearly a deformed $ A_∞ $-category. When dividing out the maximal ideal we do not exactly reach $ \Tw\cat C $ though, but a version of $ \Tw\cat C $ with lots of isomorphic objects, one copy for every choice of infinitesimal terms being added in the $ δ $ matrix. This makes $ \Tw'\cat C_q $ an essentially surjective object-cloning deformation of $ \Tw\cat C $ along the cloning map $ O: \Ob(\Tw'\cat C_q) → \Ob(\Tw\cat C) $ given by $ (X, δ_0 + δ') ↦ (X, δ_0) $.
\end{remark}

\begin{remark}
Let us check that products of this deformed $ A_∞ $-category $ \Tw'\cat C_q $ are still well-defined: Regard a sequence of $ k $ compatible entries of $ δ $. Then at least $ k - d $ of them are infinitesimal, where $ d $ is the dimension of the $ δ $-matrix. We conclude that a term
\begin{equation*}
μ_q (\underbrace{δ, …, δ}_{k_{n+1} \text{ times}}, a_n, \underbrace{δ, …, δ}_{k_n \text{ times}}, …, a_1, \underbrace{δ, …, δ}_{k_1 \text{ times}})
\end{equation*}
lies in the $ K ≔ (k_1 + … + k_{n+1} - (n+1)d) $-th power of the maximal ideal $ \mathfrak{m} $. When the number $ k_1 + … + k_{n+1} $ of total $ δ $ insertions goes to infinity, this exponent $ K $ tends towards infinity as well. This renders all products in $ \Tw'\cat C_q $ well-defined.
\end{remark}

The category $ \Tw'\cat C_q $ allows us to define twisted complexes more liberally than in $ \Tw\cat C_q $. Nevertheless both categories are essentially equal:

\begin{lemma}
\label{th:twisted-TwTw'}
Let $ \cat C $ be an $ A_∞ $-category and $ \cat C_q $ a deformation. Then $ \Tw\cat C_q $ and $ \Tw'\cat C_q $ are quasi-equivalent as deformed $ A_∞ $-categories. Moreover, let $ S ≔ \{(X_i, δ_i)\}_{i = 1, …, n} $ be a collection of objects in $ \Tw\cat C_q $ and $ δ_i' ∈ \mathfrak{m}\End^1 (X_i) $ be infinitesimal terms. Then the category
\begin{equation*}
S' = \{(X_i, δ_i + δ_i')\}_{i = 1, …, n} ⊂ \Tw'\cat C_q
\end{equation*}
is gauge equivalent to $ S $.
\end{lemma}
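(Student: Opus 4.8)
The statement has two parts, the quasi-equivalence $\Tw\cat C_q\simeq\Tw'\cat C_q$ and the gauge equivalence of the sub-collections $S$ and $S'$; the plan is to establish the first by an explicit functor and the second by passing to the Hochschild DGLA. For the quasi-equivalence I would use the functor $\iota\colon\Tw\cat C_q\to\Tw'\cat C_q$ sending $(X,\delta_0)$ to $(X,\delta_0+0)$, acting by the identity on all hom spaces (which coincide on the two sides) and with $\iota^{\geq 2}=0$ and $\iota^0=0$. Since $\delta'=0$ on the image of $\iota$, the products of $\Tw'\cat C_q$ restricted there agree term by term with those of $\Tw\cat C_q$, so $\iota$ satisfies the curved $A_\infty$-functor relations and is a genuine functor of deformed $A_\infty$-categories. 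It then remains to verify that its leading term is a quasi-equivalence in the sense of \autoref{def:prelim-ainfty-functorquasi}; for this one observes that the leading term of $\Tw'\cat C_q$ does not see the infinitesimal decoration $\delta'$ at all, so it is just $\Tw\cat C$ with each object repeated in many isomorphic labelled copies, the isomorphism $(X,\delta_0+\delta')\cong(X,\delta_0+0)$ being realised by the cocycle $\id_X$. Thus the leading term of $\iota$ is the identity on hom spaces, hence an isomorphism on cohomology, and it is essentially surjective, so it is a quasi-equivalence; by \autoref{def:curved-functors-quasiequivalence} the functor $\iota$ is then a quasi-equivalence of deformed $A_\infty$-categories.

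For the gauge equivalence of $S$ and $S'$ I would assemble the data into a single object $X:=X_1\oplus\dots\oplus X_n$, with block-diagonal twisted differential $\delta_0:=\bigoplus_i\delta_i$ (strictly upper triangular, Maurer--Cartan with respect to $\mu_{\cat C}$) and block-diagonal infinitesimal term $\delta':=\bigoplus_i\delta_i'\in\mathfrak m\Hom^1_{\Add\cat C}(X,X)$. Decomposing endomorphism algebras into their blocks identifies $S$ with the product ``$\mu_{\Add\cat C_q}$ embraced with $\delta_0$'' on $X$ and $S'$ with ``$\mu_{\Add\cat C_q}$ embraced with $\delta_0+\delta'$'', and both reduce modulo $\mathfrak m$ to one and the same $A_\infty$-category $\cat E$, namely ``$\mu_{\Add\cat C}$ embraced with $\delta_0$''. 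Hence $S$ and $S'$ are two deformations of the single category $\cat E$, with Maurer--Cartan elements $\beta_S:=\mu_S-\mu_{\cat E}$ and $\beta_{S'}:=\mu_{S'}-\mu_{\cat E}$ in $\HC(\cat E)$, and by axiom (A1) of \autoref{th:curved-axioms-convention} it suffices to show that $\beta_S$ and $\beta_{S'}$ are gauge equivalent as Maurer--Cartan elements.

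To exhibit the gauge transformation I would interpolate along $\delta_0+t\delta'$ for a formal parameter $t$: put $\mu(t):=$ ``$\mu_{\Add\cat C_q}$ embraced with $\delta_0+t\delta'$'' (well defined by the same $\mathfrak m$-adic bookkeeping used for \autoref{th:twisted-deforming-delta}), so $\mu(0)=\mu_S$ and $\mu(1)=\mu_{S'}$, and set $\beta(t):=\mu(t)-\mu_{\cat E}$. An elementary computation with the embracing formula gives $\tfrac{d}{dt}\mu(t)=[\mu(t),\delta']$ in the Gerstenhaber bracket, so $\tfrac{d}{dt}\beta(t)=[\mu_{\cat E},\delta']+[\beta(t),\delta']=d\delta'+[\beta(t),\delta']$, which is exactly the flow equation of the infinitesimal gauge action of $\delta'\in\mathfrak m\HC^0(\cat E)$ from \autoref{def:curved-gauge-def}. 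Since $\delta'$ is infinitesimal, $\mu(t)$ depends polynomially on $t$ modulo every power of $\mathfrak m$, so specialising at $t=1$ is legitimate and yields $\beta_{S'}=\beta(1)=(\exp\delta').\beta_S$; thus $\beta_S$ and $\beta_{S'}$ lie in one gauge orbit, and axiom (A1) gives the asserted gauge equivalence of $S$ and $S'$. The refined statement for a collection $S$ with prescribed infinitesimal terms $\delta_i'$ is the same argument run with this block-diagonal $\delta'$.

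The step I expect to be most delicate is the derivative identity $\tfrac{d}{dt}\mu(t)=[\mu(t),\delta']$, together with the check that the resulting equation matches the precise sign convention for the gauge generator in \autoref{def:curved-gauge-def}; both are routine but sign-sensitive, and if the sign comes out opposite one simply runs the flow of $-\delta'$ instead. A secondary point worth spelling out is that the block-diagonal reassembly genuinely identifies the full subcategories $S\subset\Tw\cat C_q$ and $S'\subset\Tw'\cat C_q$ with the two embraced structures on $X$, and that these structures make sense even though $\delta_0+t\delta'$ satisfies the Maurer--Cartan equation only at $t=0$, which is precisely the content of \autoref{th:twisted-deforming-delta}.
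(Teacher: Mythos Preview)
Your proof of the first part is essentially identical to the paper's: both use the inclusion $\Tw\cat C_q\hookrightarrow\Tw'\cat C_q$ and observe that its leading term is a quasi-equivalence onto an object-cloned copy of $\Tw\cat C$.

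For the second part your argument is correct but takes a genuinely different route from the paper. The paper simply writes down the gauge functor directly: set $F_q\colon S'\to S$ with $F_q^0\big|_{(X_i,\delta_i+\delta_i')}=\delta_i'$, $F_q^1=\Id$, $F_q^{\ge 2}=0$, and checks the curved $A_\infty$-functor relations in three lines by expanding the embracing formula, since inserting $\delta_i$ plus extra copies of $F_q^0=\delta_i'$ is the same as embracing with $\delta_i+\delta_i'$. By contrast, you pass to the Hochschild DGLA, interpolate along $\delta_0+t\delta'$, compute $\frac{d}{dt}\mu(t)=[\mu(t),\delta']$, recognise this as the gauge flow equation, and finally invoke axiom (A1) to convert MC gauge equivalence into the existence of a gauge functor. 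Your approach is more conceptual and makes transparent \emph{why} the two structures are gauge equivalent (the path $t\mapsto\delta_0+t\delta'$ lies in one gauge orbit), but it is longer, relies on axiom (A1), and requires the sign check you flag. The paper's direct construction is shorter, entirely self-contained (no axioms needed), and in fact exhibits the very functor whose existence your flow predicts---namely the exponential $\exp(\delta')$ truncates to $F_q^0=\delta'$ because $\delta'$ is a $0$-cochain. A minor presentational point: your assembly into a single object $X=\bigoplus X_i$ is unnecessary and slightly confuses the $n$-object category $S$ with a one-object algebra; the flow argument runs verbatim in $\HC(\cat E)$ with $\delta'=(\delta_i')\in\prod_i\End^1(X_i)\subset\HC^0(\cat E)$.
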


\begin{proof}
For the first part, regard the inclusion $ \Tw\cat C_q ⊂ \Tw'\cat C_q $. Upon dividing out the maximal ideal $ \mathfrak{m} $, this inclusion reduces to the inclusion of $ \Tw\cat C $ into a version of $ \Tw\cat C $ with lots of copies of objects. We conclude that the inclusion $ \Tw\cat C_q ⊂ \Tw'\cat C_q $ is a quasi-equivalence of deformed $ A_∞ $-categories.

For the second part, build the functor
\begin{equation*}
F_q: S' → S, \quad F^0_{q, (X_i, δ_i + δ_i')} = δ_i', \quad F_q^1 = \Id, \quad F_q^{≥2} = 0.
\end{equation*}
This functor is the identity on objects and indeed satisfies the curved $ A_∞ $-functor relations:
\begin{align*}
\sum μ_{S} (\underbrace{F_q^0, …, F_q^0}_{≥0 \text{ times}}, F_q^1 (a_k), & \underbrace{F_q^0, …, F_q^0}_{≥0 \text{ times}}, …, F^1 (a_1), \underbrace{F_q^0, …, F_q^0}_{≥0 \text{ times}}) \\
& = \sum μ_{\Add\cat C_q} (\underbrace{δ_i, δ_i', …}_{\text{any mix}}, a_k, …, a_1, \underbrace{δ_i, δ_i', …}_{\text{any mix}}) \\
&= \sum μ_{\Add\cat C_q} (\underbrace{δ, …, δ}_{≥0 \text{ times}}, a_k, \underbrace{δ, …, δ}_{≥0 \text{ times}}, …, a_1, \underbrace{δ, …, δ}_{≥0 \text{ times}}) \\
&= F_q^1 (μ_{S'} (a_k, …, a_1).
\end{align*}
This shows that $ F_q $ is a gauge-equivalence $ S' → S $.
\end{proof}

\subsection{Minimal models}
\label{sec:constructions-minmodel}
In this section we define a notion of minimal model for deformed $ A_∞ $-categories. The definition might differ from the reader's expectation. Indeed, a minimal model according to our definition need not have vanishing differential. We finish the section by explaining why every $ A_∞ $-deformation has a minimal model.

In \autoref{def:kadeishvili-existence-def}, we define the notion of minimal model for $ A_∞ $-deformations. The aim is to have a definition which is compatible with the classical notion and make minimal models exist for any deformation.

\begin{definition}
\label{def:kadeishvili-existence-def}
Let $ \cat C $ and $ \cat D $ be $ A_∞ $-categories and $ \cat C_q $ and $ \cat D_q $ deformations. Then $ \cat D_q $ is a \emph{minimal model} for $ \cat C_q $ if $ \cat D $ is a minimal category and there is a functor of deformed $ A_∞ $-categories
\begin{equation*}
F_q: \cat C_q → \cat D_q
\end{equation*}
whose leading term is a quasi-isomorphism $ F: \cat C → \cat D $. We denote any minimal model of $ \cat C_q $ by $ \H\cat C_q $.
\end{definition}

Let us discuss \autoref{def:kadeishvili-existence-def}. Classical minimal models of $ A_∞ $-categories have vanishing curvature $ μ^0 $ and differential $ μ^1 $ by definition. This is not the case anymore for minimal models of deformations. Instead, we require that $ \cat D $ itself is minimal, while $ \cat D_q $ is allowed to have both curvature and nonvanishing differential. Of course, the curvature and differential of $ \cat D_q $ are both infinitesimal, since $ \cat D_q $ is a deformation of the minimal $ A_∞ $-category $ \cat D $:
\begin{align*}
μ^0_{\cat D_q, X} &∈ \mathfrak{m} \Hom_{\cat D} (X, X), \quad ∀X ∈ \cat D, \\
μ^1_{\cat D_q} (x) &∈ \mathfrak{m} \Hom_{\cat D} (X, Y), \quad ∀X, Y ∈ \cat D, ~ x ∈ \Hom_{\cat D} (X, Y).
\end{align*}
At this point, writing $ \H\cat C_q $ for a minimal model constitutes abuse of notation, since we have not explained their existence. This is done in \autoref{th:kadeishvili-existence-th}.

\begin{remark}
An alternative definition is obtained by requiring that the map $ \cat C → \cat D $ induced by $ F $ be only a quasi-equivalence. The difference in the two notions is merely cosmetic. While in the version of \autoref{def:kadeishvili-existence-def} the objects of $ \cat C $ and $ \cat D $ are required to match, a definition requiring quasi-equivalence allows for additional bloat: One may add any amount of quasi-isomorphic objects to both $ \cat C $ and $ \cat D $.
\end{remark}

There are several equivalent ways of characterizing minimal models. Recall the notion of quasi-isomorphism from \autoref{def:curved-functors-quasiequivalence}.

\begin{lemma}
\label{th:kadeishvili-existence-equiv}
Let $ \cat C, \cat D $ be $ A_∞ $-categories and $ \cat C_q, \cat D_q $ deformations. The following statements are equivalent:
\begin{enumerate}
\item $ \cat D_q $ is a minimal model for $ \cat C_q $.
\item $ \cat C_q $ and $ \cat D_q $ are quasi-isomorphic, and $ \cat D $ is minimal.
\item $ \cat C_q $ and $ \cat D_q $ are quasi-isomorphic, and $ \cat D $ is a minimal model for $ \cat C $.
\item \label{it:kadeishvili-existence-std} There is a quasi-isomorphism $ F: \cat C → \cat D $ such that $ F_*^{\MC} (μ_{\cat C_q}) = μ_{\cat D_q} $ and $ \cat D $ is minimal.
\end{enumerate}
Here $ F_*^{\MC}: \MCb(\HC(\cat C), B) → \MCb(\HC(\cat D), B) $ denotes the push-forward map of Maurer-Cartan elements along $ F $.
\end{lemma}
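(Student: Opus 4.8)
The plan is to organize the argument as $(1)\Leftrightarrow(2)$, $(2)\Leftrightarrow(3)$ and $(1)\Leftrightarrow(4)$, where the first two equivalences amount to unwinding definitions and only the last one uses the axioms of \autoref{th:curved-axioms-convention}.

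First I would observe that $(1)\Leftrightarrow(2)$ is immediate. By \autoref{def:kadeishvili-existence-def}, $\cat D_q$ is a minimal model for $\cat C_q$ precisely when $\cat D$ is minimal and there is a functor of deformed $A_\infty$-categories $F_q\colon\cat C_q\to\cat D_q$ whose leading term is a quasi-isomorphism; by \autoref{def:curved-functors-quasiequivalence}, a functor of deformed $A_\infty$-categories is a quasi-isomorphism exactly when its leading term is one, and $\cat C_q,\cat D_q$ are quasi-isomorphic exactly when such a functor exists. So $(1)$ and $(2)$ literally say the same thing. For $(2)\Leftrightarrow(3)$, assume $\cat C_q$ and $\cat D_q$ are quasi-isomorphic and pick a quasi-isomorphism $F_q\colon\cat C_q\to\cat D_q$; its leading term $F\colon\cat C\to\cat D$ is then a quasi-isomorphism, so $\cat C$ and $\cat D$ are quasi-isomorphic as $A_\infty$-categories. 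Hence ``$\cat D$ is minimal'' is equivalent to ``$\cat D$ is minimal and quasi-isomorphic to $\cat C$'', which is exactly the statement that $\cat D$ is a minimal model for $\cat C$.

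It remains to prove $(1)\Leftrightarrow(4)$; since both statements include the clause ``$\cat D$ is minimal'', it suffices to show that the existence of a functor $F_q\colon\cat C_q\to\cat D_q$ with quasi-isomorphism leading term $F$ is equivalent to the existence of a quasi-isomorphism $F\colon\cat C\to\cat D$ with $F_*^{\MC}(μ_{\cat C_q})=μ_{\cat D_q}$ in $\MCb(\HC(\cat D),B)$. A quasi-isomorphism is in particular a quasi-equivalence, so axiom (A2) supplies the pushforward $L_\infty$-quasi-isomorphism $F_*$ and hence the map $F_*^{\MC}$. Axiom (A3) then says that, for a fixed such $F$, there exists a functor $F_q\colon\cat C_q\to\cat D_q$ with leading term $F$ (that is, $\cat D_q$ is a naive pushforward of $\cat C_q$ along $F$) if and only if $μ_{\cat D_q}=F_*^{\MC}(μ_{\cat C_q})$. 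Quantifying existentially over quasi-isomorphisms $F\colon\cat C\to\cat D$ on both sides gives the desired equivalence, and combining it with $(1)\Leftrightarrow(2)$ closes the loop.

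I expect no genuinely hard step; the only delicate points are bookkeeping. One must note that quasi-isomorphisms satisfy the hypotheses of axioms (A2) and (A3), which are stated for quasi-equivalences, and one must keep in mind that $F_*$ is noncanonical, so the equality $F_*^{\MC}(μ_{\cat C_q})=μ_{\cat D_q}$ is to be read — and only needs to hold — at the level of gauge-equivalence classes in $\MCb$; axiom (A3) is phrased exactly so that this reading is the correct one. One should also recall the abuse of notation flagged after \autoref{th:curved-axioms-convention}, namely that $F_*^{\MC}$ is really applied to $μ_{\cat C_q}-μ_{\cat C}$ rather than to $μ_{\cat C_q}$ itself.
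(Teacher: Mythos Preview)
Your argument is correct and is precisely the unwinding the paper has in mind: the paper's proof is the single sentence ``This is a simple consequence of the axioms stated in \autoref{th:curved-axioms-convention},'' and your $(1)\Leftrightarrow(2)\Leftrightarrow(3)$ from the definitions together with $(1)\Leftrightarrow(4)$ via (A2)--(A3) is exactly that consequence spelled out.
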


\begin{proof}
This is a simple consequence of the axioms stated in \autoref{th:curved-axioms-convention}.
\end{proof}

A consequence of \autoref{th:kadeishvili-existence-equiv} is that minimal models always exist:

\begin{corollary}
\label{th:kadeishvili-existence-th}
Let $ \cat C $ be an $ A_∞ $-category. Then any deformation $ \cat C_q $ has a minimal model.
\end{corollary}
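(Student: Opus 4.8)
The plan is to derive the statement from two ingredients that are already available: the classical Kadeishvili theorem for the undeformed category $\cat C$, and the pushforward of Maurer--Cartan elements along quasi-equivalences furnished by \autoref{th:curved-axioms-convention}. Concretely, I would build a minimal model $\H\cat C$ of $\cat C$ in the classical sense, push the deformation $\cat C_q$ onto it, and then recognize the result as a minimal model of $\cat C_q$ via \autoref{th:kadeishvili-existence-equiv}.

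First I would apply the classical Kadeishvili construction \cite{Kontsevich-Soibelman} to $\cat C$: this yields a minimal $A_\infty$-category $\H\cat C$ (with the same objects as $\cat C$, and hom spaces the cohomologies of those of $\cat C$) together with a quasi-isomorphism $\pi: \cat C \to \H\cat C$. A quasi-isomorphism is in particular a quasi-equivalence, so \autoref{th:curved-axioms-convention}(A2) provides an $L_\infty$-quasi-isomorphism $\pi_*: \HC(\cat C) \to \HC(\H\cat C)$, and (A3) provides the induced bijection $\pi_*^{\MC}$ on gauge classes of Maurer--Cartan elements over $B$.

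Next I would transport the deformation. As recalled in \autoref{sec:curved-hochschild}, the deformation $\cat C_q$ is encoded by the Maurer--Cartan element $\mu_{\cat C_q} - \mu_{\cat C}$ of $\HC(\cat C)$. I push its gauge class forward under $\pi_*^{\MC}$, pick any representing Maurer--Cartan element $\nu \in \HC(\H\cat C)$, and let $\H\cat C_q$ be the deformation of $\H\cat C$ with products $\mu_{\H\cat C} + \nu$ (this step uses the dictionary between Maurer--Cartan elements and deformed $A_\infty$-structures). By construction $\pi_*^{\MC}(\mu_{\cat C_q}) = \mu_{\H\cat C_q}$, the category $\H\cat C$ is minimal, and $\pi$ is a quasi-isomorphism; hence item (4) of \autoref{th:kadeishvili-existence-equiv} applies and shows that $\H\cat C_q$ is a minimal model of $\cat C_q$ in the sense of \autoref{def:kadeishvili-existence-def}.

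I do not expect a genuine obstacle, since the hard homological content is packaged into the classical Kadeishvili theorem and into the axioms of \autoref{th:curved-axioms-convention}. The only subtlety worth a comment is that $\pi_*^{\MC}$ returns only a gauge class, so one must choose a representative to obtain an honest deformed $A_\infty$-structure $\H\cat C_q$; any choice works, and two choices yield gauge-equivalent, hence quasi-isomorphic, minimal models. The complementary, and substantially harder, task of describing this minimal model explicitly by trees rather than abstractly is exactly what is carried out in \autoref{sec:kadeishvili}.
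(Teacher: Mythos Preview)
Your proposal is correct and matches the paper's own proof almost verbatim: the paper also picks a classical minimal model $\cat D$ of $\cat C$ with a quasi-isomorphism $F:\cat C\to\cat D$, pushes forward the Maurer--Cartan element via $F_*^{\MC}$, and then invokes item~\ref{it:kadeishvili-existence-std} of \autoref{th:kadeishvili-existence-equiv}. Your remark that $\pi_*^{\MC}$ only returns a gauge class and one must choose a representative is a useful clarification that the paper leaves implicit.
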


\begin{proof}
Pick any (classical) minimal model $ \cat D $ for $ \cat C $. Then $ \cat D $ is a minimal category and there is an $ A_∞ $-quasi-isomorphism $ F: \cat C → \cat D $. Now pick the push-forward
\begin{equation*}
μ_{\cat D_q} ≔ F_* (μ_{\cat C_q}) ∈ \MCb(\HC(\cat D), B).
\end{equation*}
This Maurer-Cartan element defines a deformation $ \cat D_q $ of $ \cat D $. This satisfies statement \ref{it:kadeishvili-existence-std} of \autoref{th:kadeishvili-existence-equiv}.
\end{proof}

\subsection{Derived categories}
\label{sec:constructions-derived}
In this section, we define derived categories of $ A_∞ $-deformations. The starting point is an $ A_∞ $-category $ \cat C $ and a deformation $ \cat C_q $. In \autoref{sec:prelim-ainfty-twisted} we have already defined the twisted completion $ \Tw\cat C_q $. In \autoref{sec:constructions-minmodel} we have introduced the notion of minimal models for $ A_∞ $-deformations. By applying it to $ \Tw\cat C_q $, we obtain the derived category $ \D\cat C_q ≔ \H\Tw\cat C_q $.

\begin{definition}
Let $ \cat C $ be an $ A_∞ $-category and $ \cat C_q $ a deformation. Then its \emph{derived category} is the $ A_∞ $-deformation $ \D\cat C_q ≔ \H\Tw\cat C_q $ of $ \H\Tw\cat C $.
\end{definition}

Derived categories are not unique. For instance, any deformation of $ \H\Tw\cat C $ which is gauge equivalent to a given model $ \H\Tw\cat C_q $ is a derived category for $ \cat C $ as well.

\begin{lemma}
Let $ \cat C $ be an $ A_∞ $-category and $ \cat C_q $ a deformation. Then the derived category $ \H\Tw\cat C_q $ is defined uniquely up to quasi-isomorphism.
\end{lemma}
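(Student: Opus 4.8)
The plan is to reduce the assertion to the statement that any two minimal models of a fixed deformation are quasi-isomorphic, and then to verify that being quasi-isomorphic is an equivalence relation on deformed $A_\infty$-categories. Unwinding the definitions (\autoref{def:kadeishvili-existence-def} and the definition of derived category), the object $\H\Tw\cat C_q$ is by construction any minimal model of the deformation $\Tw\cat C_q$ of $\Tw\cat C$. Writing $\cat E$ for $\Tw\cat C$ and $\cat E_q$ for $\Tw\cat C_q$, it therefore suffices to show: whenever $\cat D_q$ and $\cat D_q'$ are both minimal models of $\cat E_q$, there is a functor of deformed $A_\infty$-categories between them whose leading term is a quasi-isomorphism. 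By \autoref{th:kadeishvili-existence-equiv}, each of $\cat D_q$ and $\cat D_q'$ is quasi-isomorphic to $\cat E_q$, so the lemma reduces to the claim that ``quasi-isomorphic as deformed $A_\infty$-categories'' is symmetric and transitive; reflexivity is witnessed by the identity functor.

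Transitivity runs through the axioms of \autoref{th:curved-axioms-convention}: if $F\colon \cat C \to \cat D$ and $G\colon \cat D \to \cat E$ are the leading terms of quasi-isomorphisms of deformed $A_\infty$-categories, then $GF$ is again an $A_\infty$-quasi-isomorphism, and the identity $\mu_{\cat E_q} = G_*^{\MC}(F_*^{\MC}(\mu_{\cat C_q})) = (GF)_*^{\MC}(\mu_{\cat C_q})$ coming from axioms (A3) and (A5) produces, again by (A3), a functor of deformed $A_\infty$-categories $\cat C_q \to \cat E_q$ with leading term $GF$. The real content is symmetry. Starting from a quasi-isomorphism $F_q\colon \cat C_q \to \cat D_q$ with leading term $F\colon \cat C \to \cat D$, homological perturbation theory provides an $A_\infty$ quasi-inverse $\bar F\colon \cat D \to \cat C$ that is a bijection on objects, satisfies $\H\bar F = (\H F)^{-1}$, and has $\bar F F$ homotopic to $\id_{\cat C}$; in particular $\bar F$ is itself an $A_\infty$-quasi-isomorphism. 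Axiom (A3) gives $\mu_{\cat D_q} = F_*^{\MC}(\mu_{\cat C_q})$, and since the induced map on $\MCb$ depends only on the homotopy class of the underlying $A_\infty$-functor, axioms (A3) and (A5) yield
\begin{equation*}
\bar F_*^{\MC}(\mu_{\cat D_q}) = (\bar F F)_*^{\MC}(\mu_{\cat C_q}) = \mu_{\cat C_q}.
\end{equation*}
Applying (A3) one last time produces a functor of deformed $A_\infty$-categories $\cat D_q \to \cat C_q$ with leading term $\bar F$, i.e.\ a quasi-isomorphism in the reverse direction. This establishes symmetry, and hence the lemma.

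The step I expect to be the main obstacle is the invariance of the pushforward map $(-)_*^{\MC}$ on $\MCb$ under replacing an $A_\infty$-quasi-isomorphism by a homotopic one, which I used to evaluate $(\bar F F)_*^{\MC}$. This should be extractable from \autoref{th:curved-axioms-convention}: a homotopy between two leading-term functors lifts, by obstruction theory over the pro-nilpotent maximal ideal $\mathfrak m$, to a homotopy of functors of deformed $A_\infty$-categories, so that the set of naive pushforwards along $F$ coincides with the one along any homotopic $F'$, whence (A3) forces $F_*^{\MC} = (F')_*^{\MC}$. Alternatively, one can sidestep this point by transplanting the diagram-chase proof of symmetry from \autoref{th:curved-axioms-equivrelation} --- which uses only the structural properties of the ``good'' set $\mathcal{F}$ together with axioms (A4) and (A5) --- into the quasi-isomorphism setting, using that a minimal model has the same object set as the $A_\infty$-category it models, so that all functors in the relevant diagram stay bijective on objects and the conclusion upgrades from quasi-equivalent to quasi-isomorphic.
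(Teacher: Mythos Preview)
Your reduction is exactly what the paper does: $\Tw\cat C_q$ is uniquely defined, so the only ambiguity lies in the choice of minimal model, and minimal models are unique up to quasi-isomorphism. The paper's proof stops right there --- it treats the last clause as already established (implicitly via \autoref{th:kadeishvili-existence-equiv} together with \autoref{th:curved-axioms-equivrelation}) and is literally three sentences long.

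Where you go further, two remarks. For transitivity you do not need the axioms at all: given quasi-isomorphisms $F_q\colon \cat C_q \to \cat D_q$ and $G_q\colon \cat D_q \to \cat E_q$ of deformed $A_\infty$-categories, the composition of curved $A_\infty$-functors $G_q F_q$ already has leading term $GF$, which is a quasi-isomorphism of ordinary $A_\infty$-categories; there is nothing to push forward. For symmetry, your instinct to adapt the diagram-chase of \autoref{th:curved-axioms-equivrelation} is the clean way through, and your key observation is the right one: when $F$ is a quasi-isomorphism it is bijective on objects, so in that diagram one may skip the skeletal-subcategory step entirely (the composite $F' = \pi_{\cat D} F i_{\cat C}$ is already an isomorphism between minimal categories), and every functor in sight remains bijective on objects, upgrading the output from quasi-equivalence to quasi-isomorphism. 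The homotopy-invariance route you try first is not covered by the axioms of \autoref{th:curved-axioms-convention} as stated and would need extra input; your second workaround is both simpler and sufficient.
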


\begin{proof}
According to our definition, the twisted completion $ \Tw\cat C_q $ is defined uniquely. However, the minimal model $ \H\Tw\cat C_q $ is not unique. Instead, minimal models are only defined up to quasi-isomorphism. This finishes the proof.
\end{proof}

\begin{lemma}
Let $ \cat C, \cat D $ be $ A_∞ $-categories and $ \cat C_q, \cat D_q $ deformations. If $ \cat C_q $ and $ \cat D_q $ are derived equivalent, then $ \H\Tw\cat C_q $ and $ \H\Tw\cat D_q $ are quasi-equivalent.
\end{lemma}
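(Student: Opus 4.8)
The plan is to exhibit $\H\Tw\cat C_q$ and $\H\Tw\cat D_q$ as the two ends of a short chain of quasi-equivalences of deformed $A_\infty$-categories, and then invoke that quasi-equivalence is an equivalence relation. First I would unwind the definitions: by \autoref{def:curved-functors-quasiequivalence}, the hypothesis that $\cat C_q$ and $\cat D_q$ are derived equivalent means exactly that $\Tw\cat C_q$ and $\Tw\cat D_q$ are quasi-equivalent, so there is a functor $F_q \colon \Tw\cat C_q \to \Tw\cat D_q$ of deformed $A_\infty$-categories whose leading term is a quasi-equivalence.

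Next I would bring in the two minimal models. The statement that $\H\Tw\cat C_q$ is a minimal model of $\Tw\cat C_q$ supplies, by \autoref{def:kadeishvili-existence-def} (equivalently by \autoref{th:kadeishvili-existence-equiv}), a functor $P_q \colon \Tw\cat C_q \to \H\Tw\cat C_q$ of deformed $A_\infty$-categories whose leading term is a quasi-isomorphism; likewise a functor $Q_q \colon \Tw\cat D_q \to \H\Tw\cat D_q$ with quasi-isomorphism leading term. A quasi-isomorphism of $A_\infty$-categories is in particular a quasi-equivalence (by \autoref{def:prelim-ainfty-functorquasi}: if $\H F$ is an isomorphism of categories it is bijective on all hom spaces and surjective on objects), so $P_q$ and $Q_q$ are quasi-equivalences of deformed $A_\infty$-categories. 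Hence $\Tw\cat C_q$ is quasi-equivalent to $\H\Tw\cat C_q$, and $\Tw\cat D_q$ is quasi-equivalent to $\H\Tw\cat D_q$.

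Finally I would chain these together:
\[
\H\Tw\cat C_q \;\sim\; \Tw\cat C_q \;\sim\; \Tw\cat D_q \;\sim\; \H\Tw\cat D_q,
\]
where each ``$\sim$'' denotes quasi-equivalence of deformed $A_\infty$-categories. By \autoref{th:curved-axioms-equivrelation} this relation is symmetric and transitive, so $\H\Tw\cat C_q$ and $\H\Tw\cat D_q$ are quasi-equivalent, which is the claim. If one prefers an explicit functor, \autoref{th:curved-axioms-equivrelation} even produces a quasi-inverse $G_q$ of $P_q$, and then $Q_q F_q G_q \colon \H\Tw\cat C_q \to \H\Tw\cat D_q$ is the desired quasi-equivalence; one could alternatively chain the pushforward maps on Maurer-Cartan elements using axioms (A2) and (A5) of \autoref{th:curved-axioms-convention}.

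There is no genuine obstacle left at this level: the argument is a purely formal consequence of how the deformed notions were set up. The real content has already been absorbed into \autoref{th:curved-axioms-equivrelation}, whose proof in turn rests on the Hochschild-DGLA axioms of \autoref{th:curved-axioms-convention}, in particular the existence of quasi-inverse pushforwards. The only point deserving a sentence of care is that a ``minimal model'' in the sense of \autoref{def:kadeishvili-existence-def} need not have vanishing differential or curvature, so one cannot literally pass to cohomology of $\Tw\cat C_q$; but this is precisely why the abstract definition was adopted, and with it the statement reduces to the three-step chaining above.
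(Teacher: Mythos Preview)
Your argument is correct and matches the paper's proof essentially line for line: unwind the derived-equivalence hypothesis to get $\Tw\cat C_q \sim \Tw\cat D_q$, use the minimal-model definition to get $\H\Tw\cat C_q \sim \Tw\cat C_q$ and $\H\Tw\cat D_q \sim \Tw\cat D_q$, and conclude via \autoref{th:curved-axioms-equivrelation}. Your closing remark about obtaining an explicit functor $Q_q F_q G_q$ is also exactly the observation the paper makes immediately after the proof.
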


\begin{proof}
By \autoref{def:curved-functors-quasiequivalence}, the deformed $ A_∞ $-categories $ \Tw\cat C_q $ and $ \Tw\cat D_q $ are quasi-equivalent. By definition $ \H\Tw\cat C_q $ and $ \Tw\cat C_q $ are quasi-isomorphic. Similarly $ \H\Tw\cat D_q $ and $ \Tw\cat D_q $ are quasi-isomorphic. By \autoref{th:curved-axioms-equivrelation}, quasi-equivalence is an equivalence relation and hence $ \H\Tw\cat C_q $ and $ \H\Tw\cat D_q $ are quasi-equivalent.
\end{proof}

The proof above is inconvenient in that it seems to give no clue about an actual quasi-equivalence $ \H\Tw\cat C_q → \H\Tw\cat D_q $. However we can construct such a functor by properly inverting and chaining the given information. Indeed, the minimal models by definition come with quasi-isomorphisms $ \Tw\cat C_q → \H\Tw\cat C_q $ and $ \Tw\cat D_q → \H\Tw\cat D_q $. As we see in the proof of \autoref{th:curved-axioms-equivrelation}, the quasi-isomorphism $ \Tw\cat C_q → \H\Tw\cat C_q $ has a kind of quasi-inverse. By chaining up all functors, we now obtain an explicit quasi-equivalence
\begin{equation*}
\H\Tw\cat C_q → \Tw\cat C_q → \Tw\cat D_q → \H\Tw\cat D_q.
\end{equation*}

\subsection{The theory of uncurving}
\label{sec:uncurving-theory}
In this section, we recollect uncurving theory for $ A_∞ $-deformations. The starting point is an arbitrary deformed $ A_∞ $-category with curvature. We are interested in the question which objects become curvature-free once we apply a gauge functor to the category. In the present section, we explain this question and define terminology. We prove elementary properties. Uncurving has been studied in the literature, for instance under the name of the “curvature problem” in \cite{Lowen-vdB}.

It is our aim to explain in how far a gauge equivalence can change the curvature of a deformation. As a starting point, let $ \cat C $ be an $ A_∞ $ category. Let $ \cat C_q $ and $ \cat C_q' $ be two deformations of $ \cat C $ connected by a gauge equivalence $ F: \cat C_q → \cat C_q' $. The curvature of $ \cat C_q $ and $ \cat C_q' $ are related by the zeroth (curved) $ A_∞ $-functor relation:
\begin{equation*}
F^1 (μ^0_{\cat C_q}) = μ^0_{\cat C_q'} + μ^1 (F^0) + μ^2 (F^0, F^0) + …~.
\end{equation*}
The map $ F^1 $ has leading term the identity and is then a linear isomorphism by \autoref{th:prelim-htensor-isocriterion}. This means that the curvature $ μ^0_{\cat C_q} $ depends only on $ F^0 $ and $ F^1 $, and not on the higher components $ F^{≥2} $. If we approximate $ F^1 $ as the identity, we conclude that uncurving essentially depends only on the choice of $ F^0 $. This gives rise to the following definition:

\begin{definition}
Let $ \cat C $ be an $ A_∞ $-category and $ \cat C_q $ a deformation. Let $ X ∈ \cat C $. Then $ X $ is \emph{uncurvable} if there exists an $ S ∈ \mathfrak{m} \End^1(X) $ such that
\begin{equation*}
μ^0_X + μ^1_{\cat C_q} (S) + μ^2_{\cat C_q} (S, S) + … = 0.
\end{equation*}
\end{definition}

We will now prove several basic properties regarding uncurvable objects.

\begin{lemma}
\label{th:uncurving-functor-image}
Let $ F: \cat C_q → \cat D_q $ be a functor of deformed $ A_∞ $-categories. If $ X ∈ \cat C_q $ is uncurvable, then so is $ F(X) $.
\end{lemma}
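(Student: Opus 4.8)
The plan is to produce an explicit witness for the uncurvability of $F(X)$ by pushing the given witness forward through the functor. Let $S \in \mathfrak{m}\End^1(X)$ satisfy the uncurving equation $\mu^0_{\cat C_q,X} + \mu^1_{\cat C_q}(S) + \mu^2_{\cat C_q}(S,S) + \cdots = 0$, and abbreviate this left-hand side by $\MC_X(S)$; more generally, write $\MC_Y(\sigma) := \mu^0_Y + \mu^1(\sigma) + \mu^2(\sigma,\sigma) + \cdots$ for the curved Maurer--Cartan expression of an infinitesimal degree-one element $\sigma \in \mathfrak{m}\End^1(Y)$ in a deformed $A_\infty$-category. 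The candidate witness for $F(X)$ is the pushforward
\begin{equation*}
S' := F^0_{q,X} + F^1(S) + F^2(S,S) + \cdots \in \mathfrak{m}\End^1(F(X)),
\end{equation*}
where the $m=0$ term is read as the functor's own curvature $F^0_{q,X}$. First I would check this is well posed: each $F^m(S,\dots,S)$ lies in $\mathfrak{m}^m$ by $B$-multilinearity of $F^m$ (as $S$ is infinitesimal), so the series converges $\mathfrak{m}$-adically; $F^0_{q,X}$ is infinitesimal of degree one by \autoref{def:curved-functors-def}; and since $\deg F^m = 1-m$ and $\|S\| = 0$, every summand has degree one, so indeed $S' \in \mathfrak{m}\End^1(F(X))$.

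The core of the argument is then to substitute $a_k = \cdots = a_1 = S$ into the curved $A_\infty$-functor relations of \autoref{def:curved-functors-def} and sum over the number of insertions $k \ge 0$. On the left-hand side, collecting terms according to how many copies of $S$ lie to the left of, inside, and to the right of the inner product turns the whole sum into $\sum_{p,s \ge 0} F^{p+1+s}(S^{\otimes p},\, \MC_X(S),\, S^{\otimes s})$ — the sum over the arity of the inner product reassembling $\MC_X(S)$, its $\mu^0_{\cat C_q,X}$ term included — which vanishes because $\MC_X(S) = 0$. On the right-hand side, expanding each block $F(\dots)$ as the corresponding $F^m(S^{\otimes m})$ (empty blocks contributing $F^0_{q,X}$) and using $B$-multilinearity of $\mu_{\cat D_q}$, the sum collapses to $\sum_{l \ge 0} \mu^l_{\cat D_q}(S',\dots,S') = \MC_{F(X)}(S')$, the $l=0$ term being $\mu^0_{\cat D_q,F(X)}$. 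Comparing the two sides gives $\MC_{F(X)}(S') = 0$, which is precisely the statement that $S'$ witnesses the uncurvability of $F(X)$. Since $\|S\| = \|S'\| = 0$, all signs in the functor relations and in the curved Maurer--Cartan expressions are trivial, so no sign bookkeeping intervenes.

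The main obstacle I anticipate is purely combinatorial: reconciling the summation convention in the curved $A_\infty$-functor relation of \autoref{def:curved-functors-def} — which blocks may be empty, and how the functor curvature $F^0_{q,X}$ slots in — with the pushforward formula for $S'$, and verifying that the inner sum on the left genuinely reconstitutes the full curved Maurer--Cartan expression $\MC_X(S)$ (curvature term and all). Nothing here uses the invariance axioms of \autoref{th:curved-axioms-convention}; the lemma is elementary, and a clean way to organize it is to note that restricting $F$ to the endomorphism algebras gives a curved $A_\infty$-functor $\End_{\cat C_q}(X) \to \End_{\cat D_q}(F(X))$ with curvature $F^0_{q,X}$, and then to invoke the standard fact that curved $A_\infty$-functors carry Maurer--Cartan elements to Maurer--Cartan elements via exactly this pushforward formula.
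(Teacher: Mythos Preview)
Your proof is correct and follows essentially the same approach as the paper: both define the same pushforward witness $S' = F^0_{q,X} + F^1(S) + F^2(S,S) + \cdots$ and verify $\MC_{F(X)}(S') = 0$ by inserting copies of $S$ into the curved $A_\infty$-functor relations. The paper organizes the computation by first expanding $\MC_{F(X)}(S')$ and then applying the functor relation, whereas you apply the functor relation directly and identify its two sides; these are the same calculation read in opposite directions.
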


\begin{proof}
Let $ S ∈ \mathfrak{m}\End^1 (X) $ be the uncurving morphism, that is
\begin{equation*}
μ^0_X + μ^1 (S) + μ^2 (S, S) + … = 0.
\end{equation*}
Now set
\begin{equation*}
T := F^0_X + F^1 (S) + F^2 (S, S) + … ∈ \mathfrak{m}\End^1 (F(X)).
\end{equation*}
We claim that $ T $ is an uncurving morphism for $ F(X) $. Indeed,
\begin{align*}
μ^0_{F(X)} + μ^1 (T) + μ^2 (T, T) + … & = \big(μ^0_{F(X)} + μ^1 (F^0_X) + μ^2 (F^0_X, F^0_X) + …\big) \\
& + \big(μ^1 (F^1 (S)) + μ^2 (F^1 (S), F^0_X) + μ^2 (F^0_X, F^1 (S)) + …\big) \\
& + \big(μ^1 (F^2 (S, S)) + μ^2 (F^1 (S), F^1 (S)) + μ^2 (F^2 (S, S), F^0_X) + … \big) + …
\end{align*}
We apply the curved $ A_∞ $ rule to these terms and continue
\begin{align*}
&= F^1 \big(μ^0_X + μ^1 (S) + μ^2 (S, S) + …\big) + F^2 \big(μ^0_X + μ^1 (S) + μ^2 (S, S) + …, S\big) \\
& \quad + F^2 \big(S, μ^0_X + μ^1 (S) + μ^2 (S, S) + …\big) + … \\
&= F^1 (0) + F^2 (0, S) + F^2 (S, 0) + F^3 (0, S, S) + … = 0.
\end{align*}
This shows that $ T $ is an uncurving morphism for $ F(X) $.
\end{proof}

\begin{lemma}
\label{th:uncurving-uncurvable-isomorphic}
Let $ \cat C $ be an $ A_∞ $-category and $ \cat C_q $ a deformation. Assume $ \cat C $ is minimal, $ X ≅ Y $ in $ \cat C $ and $ X $ is uncurvable. Then so is $ Y $.
\end{lemma}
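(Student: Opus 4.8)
The plan is to reduce the statement to \autoref{th:uncurving-functor-image} by transporting the uncurving morphism of $X$ along an $A_\infty$-functor induced by the isomorphism $X \cong Y$. I first observe that uncurvability of an object $Z \in \cat C_q$ depends only on the one-object deformed $A_\infty$-category $\End_{\cat C_q}(Z)$, which is a deformation of $\End_{\cat C}(Z)$ in the sense of \autoref{def:prelim-defo-def}: the defining equation $\mu^0_{\cat C_q, Z} + \mu^1_{\cat C_q}(S) + \mu^2_{\cat C_q}(S, S) + \cdots = 0$ only involves products of elements of $\End_{\cat C}(Z)$. Hence it suffices to exhibit a functor of deformed $A_\infty$-categories $\Psi_q \colon \End_{\cat C_q}(X) \to \End_{\cat C_q}(Y)$ sending the object $X$ to the object $Y$; then $Y$ is uncurvable in $\End_{\cat C_q}(Y)$ by \autoref{th:uncurving-functor-image}, hence uncurvable in $\cat C_q$.

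To build $\Psi_q$ I would start with its leading term. Since $\cat C$ is minimal we have $\mu^1_{\cat C} = 0$, so $\mu^2_{\cat C}$ is graded associative and the isomorphism $X \cong Y$ is witnessed by genuine morphisms $f \in \Hom^0_{\cat C}(X, Y)$, $g \in \Hom^0_{\cat C}(Y, X)$ with $f \circ g = \id_Y$ and $g \circ f = \id_X$, where $a \circ b = (-1)^{|b|}\mu^2_{\cat C}(a, b)$. Associativity of $\circ$ then shows that conjugation $\Psi^1(a) := f \circ a \circ g$ is a strict homomorphism for both $\mu^1_{\cat C}$ and $\mu^2_{\cat C}$ and sends $\id_X$ to $\id_Y$; it extends to a genuine $A_\infty$-isomorphism $\Psi \colon \End_{\cat C}(X) \to \End_{\cat C}(Y)$ by inductively choosing higher components $\Psi^{\geq 2}$ that absorb the failure of $\Psi^1$ to commute with $\mu^{\geq 3}_{\cat C}$. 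This is the standard fact that isomorphic objects of an $A_\infty$-category have $A_\infty$-isomorphic endomorphism algebras.

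It remains to lift $\Psi$ to the deformed setting. The morphisms $f, g$ are undeformed — they lie in $\cat C \subset \cat C_q$, since the hom-spaces of $\cat C_q$ are $B \htensor \Hom_{\cat C}$ — and they remain mutually inverse modulo $\mathfrak{m}$. Running the same inductive construction with $\mu_{\cat C_q}$ in place of $\mu_{\cat C}$, and allowing an infinitesimal curvature component $\Psi^0_q \in \mathfrak{m}\End^1_{\cat C}(Y)$, should produce a functor of deformed $A_\infty$-categories $\Psi_q$ with leading term $\Psi$: one solves the curved $A_\infty$-functor relations order by order in $\mathfrak{m}$, the obstruction at each step vanishing because $\Psi$ already solves the undeformed relations, and the relevant sums converging $\mathfrak{m}$-adically. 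I expect this last step to be the main obstacle: one must check that the curved $A_\infty$-functor relations remain solvable once $\mu^0_{\cat C_q, X}$ enters as an inner product and $\mu^1_{\cat C_q}$ no longer vanishes (so $\mu^2_{\cat C_q}$ is not associative), and that all sums converge $\mathfrak{m}$-adically. An abstract alternative that avoids explicit higher terms: the $\cat C$-level relabeling extends to an endofunctor $\tilde\Psi \colon \cat C \to \cat C$ naturally isomorphic to $\id_{\cat C}$, hence fixing Maurer--Cartan classes, and restricting along the full embeddings $\End_{\cat C}(X), \End_{\cat C}(Y) \subset \cat C$ together with axiom (A3) of \autoref{th:curved-axioms-convention} then yields $\Psi_q$; but either route amounts to executing some incarnation of the principle that $A_\infty$-isomorphic objects behave identically.
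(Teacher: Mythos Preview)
Your overall strategy---reduce to \autoref{th:uncurving-functor-image} by producing a functor of deformed $A_\infty$-categories sending $X$ to $Y$---is exactly the paper's strategy. The difference is in how that functor is obtained, and here your execution is left incomplete.

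Your explicit route (conjugate by $f,g$, then lift $\Psi$ to $\Psi_q$ order by order in $\mathfrak{m}$) is plausible but, as you yourself flag, the lifting step is not carried out: you have not shown that the curved $A_\infty$-functor relations are solvable at each order once curvature $\mu^0_{\cat C_q,X}$ and a nonzero $\mu^1_{\cat C_q}$ enter. Your abstract alternative is also only a sketch: it is not clear why an $A_\infty$-endofunctor $\tilde\Psi:\cat C\to\cat C$ sending $X\mapsto Y$ and naturally isomorphic to $\id_{\cat C}$ should exist, nor how axiom (A3) alone produces the desired $\Psi_q$ from it.

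The paper bypasses all of this with a two-line argument. Restrict to the full deformed subcategories $\{Y\}_q \hookrightarrow \{X,Y\}_q$ of $\cat C_q$. Since $X\cong Y$ in the minimal category $\cat C$, the leading term $\{Y\}\hookrightarrow\{X,Y\}$ is a quasi-equivalence, so the inclusion of deformed categories is a quasi-equivalence in the sense of \autoref{def:curved-functors-quasiequivalence}. Now invoke \autoref{th:curved-axioms-equivrelation} (which packages the axioms of \autoref{th:curved-axioms-convention}) to obtain a quasi-equivalence in the opposite direction $\{X,Y\}_q \to \{Y\}_q$. This functor must send $X$ to $Y$ (there is no other object), and \autoref{th:uncurving-functor-image} finishes. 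The point is that inverting a quasi-equivalence is already available off the shelf via the axioms, so no explicit higher components or order-by-order lifting are needed.
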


\begin{proof}
Regard the embedding
\begin{equation*}
\{Y\}_q → \{X, Y\}_q,
\end{equation*}
where both sides are defined as full subcategories of $ \cat C_q $. The embedding is an equivalence, since it reduces to the inclusion $ \{Y\} → \{X, Y\} $ which is essentially surjective in cohomology. By \autoref{th:curved-axioms-equivrelation}, there is a quasi-equivalence in opposite direction
\begin{equation*}
\{X, Y\}_q → \{Y\}_q.
\end{equation*}
In particular, it maps $ X $ to $ Y $. Since $ X $ is uncurvable, an application of \autoref{th:uncurving-functor-image} shows that $ Y $ is uncurvable.
\end{proof}

The next lemma concerns uncurving of objects in minimal models. As a preparation let us recall that a minimal model $ \H\cat C_q $ comes with a choice of quasi-isomorphism $ F_q: \cat C_q → \H\cat C_q $. Correspondingly, the objects of $ \cat C_q $ and $ \H\cat C_q $ are identified via $ F_q $.

\begin{lemma}
\label{th:uncurving-uncurvable-minmodel}
Let $ \cat C $ be an $ A_∞ $-category and $ \cat C_q $ a deformation. Then an object $ X $ is uncurvable in $ \cat C_q $ if and only if it is uncurvable in $ \H\cat C_q $.
\end{lemma}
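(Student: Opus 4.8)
The plan is to reduce the statement to the endomorphism algebras of $X$ and then to apply \autoref{th:uncurving-functor-image} in both directions. Observe first that uncurvability of $X$ is intrinsic to the one-object deformed $A_\infty$-algebra $A_q \coloneqq \End_{\cat C_q}(X)$: the defining equation $\mu^0_X + \mu^1_{\cat C_q}(S) + \mu^2_{\cat C_q}(S, S) + \dots = 0$ only involves the curvature $\mu^0_X$ and the products $\mu^k_{\cat C_q}$ evaluated on morphisms $X \to X$. Hence $X$ is uncurvable in $\cat C_q$ if and only if the unique object of $A_q$ is uncurvable, and likewise $X$ is uncurvable in $\H\cat C_q$ if and only if the unique object of $B_q \coloneqq \End_{\H\cat C_q}(X)$ is uncurvable. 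It therefore suffices to compare uncurvability in $A_q$ with uncurvability in $B_q$.

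Next I would produce a functor between these algebras. The quasi-isomorphism $F_q \colon \cat C_q \to \H\cat C_q$ provided with the minimal model (\autoref{def:kadeishvili-existence-def}) restricts to a functor $f_q \colon A_q \to B_q$ of deformed $A_\infty$-algebras, by keeping the components $F_q^k$ on tuples of morphisms $X \to X$ together with the infinitesimal curvature $F^0_{q, X}$; the curved $A_\infty$-functor relations survive the restriction since they only involve such morphisms. The leading term $f \colon A \to B$ of $f_q$ is the corresponding restriction of $F$, and because $F$ is a quasi-isomorphism of $A_\infty$-categories (hence bijective on objects with $\H F$ an isomorphism), $f^1$ induces an isomorphism on cohomology; thus $f$ is a quasi-isomorphism of $A_\infty$-algebras, in particular a quasi-equivalence. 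Applying \autoref{th:uncurving-functor-image} to $f_q$, and using $f_q(X) = X$, gives at once the forward implication: if $X$ is uncurvable in $A_q$, it is uncurvable in $B_q$.

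For the converse I would build a functor $g_q \colon B_q \to A_q$ lifting a quasi-inverse of $f$. By \autoref{th:curved-axioms-equivrelation} there is a quasi-equivalence $g \colon B \to A$ with $g_* f_* = \id$ on Maurer-Cartan classes. The very existence of $f_q$ shows, by axiom (A3) of \autoref{th:curved-axioms-convention}, that $\mu_{B_q} = f_*^{\MC}(\mu_{A_q})$; combined with functoriality (A5) this yields $g_*^{\MC}(\mu_{B_q}) = g_*^{\MC}(f_*^{\MC}(\mu_{A_q})) = (gf)_*^{\MC}(\mu_{A_q}) = \mu_{A_q}$, so by (A3) again there is a functor $g_q \colon B_q \to A_q$ with leading term $g$. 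Since $A_q$ and $B_q$ have a single object, $g_q(X) = X$, and \autoref{th:uncurving-functor-image} applied to $g_q$ shows that $X$ uncurvable in $B_q$ implies $X$ uncurvable in $A_q$. Together with the reduction of the first paragraph, this completes the proof.

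The one genuinely delicate point, and the only real obstacle, is the bookkeeping around the passage to endomorphism algebras: one must verify that restricting a (deformed) $A_\infty$-functor to full one-object subcategories yields again a (deformed) $A_\infty$-functor, that ``uncurvable'' is truly a property of $\End(X)$ alone, and --- crucially --- that axiom (A3) is applicable to the pair $(f, f_q)$ so as to conclude $\mu_{B_q} = f_*^{\MC}(\mu_{A_q})$, which is what allows the machinery of \autoref{th:curved-axioms-convention} to be run backwards and produce $g_q$. Once these points are in place, everything reduces to two direct invocations of \autoref{th:uncurving-functor-image}.
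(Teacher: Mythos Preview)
Your proof is correct and follows essentially the same route as the paper: restrict to the one-object subcategories $\{X\}_q \subset \cat C_q$ and $\{F_q(X)\}_q \subset \H\cat C_q$, apply \autoref{th:uncurving-functor-image} along the restricted $F_q$ for one direction, and invoke \autoref{th:curved-axioms-equivrelation} to obtain a deformed functor in the reverse direction for the other. The only difference is cosmetic: the paper simply cites \autoref{th:curved-axioms-equivrelation} for the existence of the reverse quasi-isomorphism $\{F_q(X)\}_q \to \{X\}_q$, whereas you unpack that citation by explicitly running axioms (A3) and (A5) of \autoref{th:curved-axioms-convention}.
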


\begin{proof}
Let $ F_q: \cat C_q → \H\cat C_q $ be a quasi-isomorphism and let $ X ∈ \cat C_q $. We need to show that $ X $ is uncurvable if and only if $ F_q (X) $ is uncurvable. By \autoref{th:uncurving-functor-image}, $ F_q (X) $ is clearly uncurvable if $ X $ is uncurvable. For the other direction, regard the restriction $ F_q \restr{\{X\}}: \{X\}_q → \{F_q(X)\}_q $. It is a quasi-isomorphism and by \autoref{th:curved-axioms-equivrelation} there exists a quasi-isomorphism $ \{F_q (X)\}_q → \{X\}_q $ in opposite direction. Now if $ F_q (X) $ is uncurvable, then by \autoref{th:uncurving-functor-image} also $ X $ is uncurvable. This finishes the proof.
\end{proof}

\begin{corollary}
\label{th:uncurving-quasi-isomorphic}
Let $ \cat C $ be an $ A_∞ $-category and $ \cat C_q $ a deformation. If $ X $ and $ Y $ are quasi-isomorphic in $ \cat C $, then $ X $ is uncurvable if and only if $ Y $ is uncurvable.
\end{corollary}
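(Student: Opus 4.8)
The plan is to reduce the statement to the minimal model, where \autoref{th:uncurving-uncurvable-isomorphic} already settles the analogous claim for genuinely isomorphic objects. First I would invoke \autoref{th:kadeishvili-existence-th} to fix a minimal model $\H\cat C_q$ together with its defining quasi-isomorphism $F_q \colon \cat C_q \to \H\cat C_q$, whose leading term $F \colon \cat C \to \H\cat C$ is a quasi-isomorphism of $A_\infty$-categories and whose underlying object map identifies the objects of $\cat C_q$ with those of $\H\cat C_q$. Applying \autoref{th:uncurving-uncurvable-minmodel} to both $X$ and $Y$, the object $X$ is uncurvable in $\cat C_q$ if and only if $F_q(X)$ is uncurvable in $\H\cat C_q$, and likewise for $Y$; so the claim is equivalent to the assertion that $F_q(X)$ is uncurvable in $\H\cat C_q$ if and only if $F_q(Y)$ is.

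Next I would transport the hypothesis along $F$. By \autoref{def:prelim-defo-qi} and the definition of quasi-isomorphic objects, "$X$ and $Y$ quasi-isomorphic in $\cat C$" means $X \cong Y$ in $(\H\cat C)^0$. Since $F$ is a quasi-isomorphism, $\H F$ is an isomorphism of $A_\infty$-categories; as $\H\cat C$ is already minimal we may identify $\H(\H\cat C)$ with $\H\cat C$, so $\H F$ restricts to a $\C$-linear isomorphism on degree-zero parts, carrying objects by the object map of $F$ (hence of $F_q$). Consequently $X \cong Y$ in $(\H\cat C)^0$ forces $F(X) \cong F(Y)$ in $(\H\cat C)^0$; that is, $F_q(X)$ and $F_q(Y)$ are isomorphic objects in the minimal $A_\infty$-category $\H\cat C$.

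Then I would apply \autoref{th:uncurving-uncurvable-isomorphic} with the minimal category $\H\cat C$ in place of $\cat C$ and the deformation $\H\cat C_q$ in place of $\cat C_q$: since $F_q(X) \cong F_q(Y)$ in $\H\cat C$, uncurvability of $F_q(X)$ in $\H\cat C_q$ implies uncurvability of $F_q(Y)$, and the converse follows by symmetry. Chaining this with the two equivalences from the first step yields $X$ uncurvable in $\cat C_q \iff F_q(X)$ uncurvable in $\H\cat C_q \iff F_q(Y)$ uncurvable in $\H\cat C_q \iff Y$ uncurvable in $\cat C_q$, which is the corollary.

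I expect the only point requiring genuine care to be the middle paragraph: checking that "quasi-isomorphic in $\cat C$" is really preserved under the passage to $\H\cat C$, i.e. that $\H F$ identifies $(\H\cat C)^0$ with $(\H(\H\cat C))^0 = (\H\cat C)^0$ compatibly with isomorphism classes of objects and with the object identification used in \autoref{th:uncurving-uncurvable-minmodel}. Everything else is a formal concatenation of the uncurving lemmas already established.
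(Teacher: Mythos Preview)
Your proposal is correct and follows essentially the same route as the paper: pass to a minimal model, use \autoref{th:uncurving-uncurvable-minmodel} to transfer uncurvability back and forth, note that quasi-isomorphic objects in $\cat C$ become genuinely isomorphic in $\H\cat C$, and apply \autoref{th:uncurving-uncurvable-isomorphic} there. The paper's proof is the same chain of equivalences, just written more tersely and with the object identification $X \leftrightarrow F_q(X)$ left implicit.
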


\begin{proof}
Pick a minimal model $ \H\cat C_q $. The objects $ X $ and $ Y $ are isomorphic in $ \H\cat C $. Combining \autoref{th:uncurving-uncurvable-isomorphic}
 and \ref{th:uncurving-uncurvable-minmodel}, we conclude
\begin{equation*}
X ∈ \cat C_q \text{ uncurvable } ⇔ X ∈ \H\cat C_q \text{ uncurvable } ⇔ Y ∈ \H\cat C_q \text{ uncurvable } ⇔ Y ∈ \cat C_q \text{ uncurvable}.
\end{equation*}
This chain of equivalences proves the claim.
\end{proof}

\autoref{th:uncurving-quasi-isomorphic} might be slightly surprising. It is entirely irrelevant for uncurvability how $ X $ and $ Y $ get deformed themselves, the only relevant measure is whether they are quasi-equivalent in $ \cat C $. This supports \autoref{def:prelim-defo-qi} where we defined two objects $ X, Y ∈ \cat C_q $ to be quasi-isomorphic already if they are quasi-isomorphic in $ \cat C $.

\section{A deformed Kadeishvili theorem}
\label{sec:kadeishvili}
The aim of this section is to prove a deformed Kadeishvili theorem. The classical Kadeishvili theorem states that every $ A_∞ $-category has a minimal model, and the minimal model can be computed by a construction with trees. The starting point for the present section is an arbitrary deformed $ A_∞ $-category. In particular, it may contain curvature and its differential need not square to zero. As we have already explained in \autoref{sec:constructions-minmodel}, every deformed $ A_∞ $-category has a minimal model.

In the present section we construct a minimal model $ \H\cat C_q $ explicitly for any deformed $ A_∞ $-category. Our approach is to construct the minimal model via trees. The bottleneck in comparison with the classical case is the curvature and the failure of the differential to square to zero. We are therefore forced to analyze the shape of the differential in detail and build methods that are robust enough to work with less premises than the classical Kadeishvili theorem.

\begin{center}
\begin{tikzpicture}
\begin{scope}
\path (-4, 0) node {\textbf{Classical Kadeishvili:}};
\path (0, 0) node (A) {$ A_∞ $-category $ \cat C $} (6, 0) node[align=center] (B) {Minimal model $ \H\cat C $};
\path[draw, decorate, decoration={snake, amplitude=0.2em, post length=0.5em}, ->] ($ (A.east)!0.2!(B.west) $) to ($ (A.east)!0.8!(B.west) $);
\end{scope}
\begin{scope}[shift={(down:1)}]
\path (-4, 0) node {\textbf{Deformed Kadeishvili:}};
\path (0, 0) node (A) {$ A_∞ $-deformation $ \cat C_q $} (6, 0) node[align=center] (B) {Minimal model $ \H\cat C_q $};
\path[draw, decorate, decoration={snake, amplitude=0.2em, post length=0.5em}, ->] ($ (A.east)!0.2!(B.west) $) to ($ (A.east)!0.8!(B.west) $);
\end{scope}
\end{tikzpicture}
\end{center}

In \autoref{sec:kadeishvili-splitting}, we recall homological splittings, a classical basic notion in the construction of minimal models. In \autoref{sec:kadeishvili-classical}, we review the classical Kadeishvili theorem and the description of the higher products by trees. In \autoref{sec:kadeishvili-deformed}, we analyze differentials of $ A_∞ $-deformations in detail. In \autoref{sec:kadeishvili-optimizing}, we provide a procedure to optimize the curvature of $ A_∞ $-deformations. In \autoref{sec:kadeishvili-auxiliary}, we provide an auxiliary minimal model construction for deformed $ A_∞ $-categories which already have optimal curvature. In \autoref{sec:kadeishvili-general}, we compile all the constructions into a single theorem. Our deformed Kadeishvili theorem \autoref{th:kadeishvili-general-th} states that a minimal model for every deformed $ A_∞ $-category can be described explicitly, by means of applying the optimization procedure followed by a construction with trees. In \autoref{sec:kadeishvili-caseD} we study a special case of the deformed Kadeishvili theorem and relate it back to the classical Kadeishvili theorem.

\subsection{Homological splittings}
\label{sec:kadeishvili-splitting}
In this section we recall the notion of homological splittings. The idea is to split a cochain complex into three direct summands in terms of which the differential becomes easy to describe. This is a classical notion, often just referred to as a “split” \cite{Bocklandt}. Instead of defining the notion for any cochain complex, we will directly set off in the context of an $ A_∞ $-category.

\begin{definition}
Let $ \cat C $ be an $ A_∞ $-category. Then a \emph{homological splitting} of $ \cat C $ consists of a direct sum decomposition
\begin{equation*}
\Hom_{\cat C} (X, Y) = H(X, Y) ⊕ I(X, Y) ⊕ R(X, Y), \quad ∀X, Y ∈ \cat C
\end{equation*}
for all its hom spaces, such that
\begin{equation*}
I(X, Y) = \Im(μ^1), \quad \Ker(μ^1) = H(X, Y) ⊕ I(X, Y), \quad ∀X, Y ∈ \cat C.
\end{equation*}
We frequently denote a homological splitting of $ \cat C $ simply by the letters $ H ⊕ I ⊕ R $, the dependence on $ X, Y ∈ \cat C $ understood implicitly.
\end{definition}

Given a category $ \cat C $, one obtains a homological splitting by choosing $ H $ as a space of cocycles that represents the cohomology of the hom complexes. One then chooses $ R $ as a complement to $ H $ in $ \Ker(μ^1) $. The notation $ I $ is simply a shorthand for the image of the differential.

\begin{remark}
Almost everywhere in this \autoref{sec:kadeishvili}, we leave out the letters $ X $ and $ Y $. All definitions, equations and expressions referring to elements of $ H $, $ I $ and $ R $ are to be interpreted as being quantified over $ X, Y ∈ \cat C $. The quantification is understood implicitly. We will even write for instance $ \Hom_{\cat C} = H ⊕ I ⊕ R $, meaning $ \Hom_{\cat C} (X, Y)  = H(X, Y) ⊕ I(X, Y) ⊕ R(X, Y) $ for every $ X, Y ∈ \cat C $.
\end{remark}

In the remainder of this section, we record a few consequences of the choice of homological splitting. To start with, in terms of the direct sum decomposition $ \Hom_{\cat C} = H ⊕ I ⊕ R $, the differential reads
\begin{equation}
\label{eq:kadeishvili-splitting-shape}
μ^1 = \begin{pmatrix} 0 & 0 & 0 \\ 0 & 0 & μ^1 \\ 0 & 0 & 0 \end{pmatrix}.
\end{equation}
Indeed, the spaces $ H $ and $ I $ are mapped entirely to zero through $ μ^1 $. The only space not being sent to zero is $ R $. This gives the claimed matrix shape \eqref{eq:kadeishvili-splitting-shape}. The action of $ μ^1 $ on the three summands $ H ⊕ I ⊕ R $ is depicted visually in \autoref{fig:kadeishvili-splitting-differential}.

A second observation is that $ μ^1 \restr{R}: R → I $ is a linear isomorphism and provides an identification between $ R $ and $ I $. In fact, the map is injective because the kernel of $ μ^1 $ equals $ H ⊕ I $ which has vanishing intersection with $ R $. Moreover, the map is surjective because $ μ^1 $ already reaches its entire image on $ R $. As a consequence, we can identify $ R $ and $ I $ by means of $ μ^1 $. Note that both differ by a shift of $ 1 $. Upon this identification, the remaining $ μ^1 $ entry in the matrix presentation \eqref{eq:kadeishvili-splitting-shape} becomes the identity $ \Id_R $.

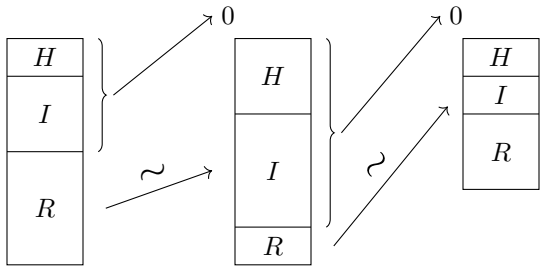
\begin{figure}
\centering
\begin{tikzpicture}
\path[draw] (0, 0) -- ++(down:0.5) coordinate (A) coordinate[midway] (H1) -- ++(down:1) coordinate (B) coordinate[midway] (I1) -- ++(down:1.5) coordinate[midway] (R1) -- ++(right:1) -- ++(up:3) -- cycle;
\path[draw] (3, 0) -- ++(down:1) coordinate (C) coordinate[midway] (H2) -- ++(down:1.5) coordinate (D) coordinate[midway] (I2) -- ++(down:0.5) coordinate[midway] (R2) -- ++(right:1) -- ++(up:3) -- cycle;
\path[draw] (6, 0) -- ++(down:0.5) coordinate (E) coordinate[midway] (H3) -- ++(down:0.5) coordinate (F) coordinate[midway] (I3) -- ++(down:1) coordinate[midway] (R3) -- ++(right:1) -- ++(up:2) -- cycle;
\foreach \i in {A, B, C, D, E, F} \path[draw] (\i) -- ++(right:1);
\path[draw, decorate, decoration={brace, amplitude=3pt}] (1.2, 0) -- (1.2, -1.5) coordinate[midway] (m1);
\path[draw, decorate, decoration={brace, amplitude=3pt}] (4.2, 0) -- (4.2, -2.5) coordinate[midway] (m2);
\path[draw, ->] (1.3, -2.25) -- (2.7, -1.75) node[midway, above, sloped] {\Large $ \sim $};
\path[draw, ->] (4.3, -2.75) -- (5.8, -0.9) node[midway, above, sloped] {\Large $ \sim $};
\path[draw, ->] (m1) ++(right:0.2) -- (2.7, 0.3) node[right] {$ 0 $};
\path[draw, ->] (m2) ++(right:0.2) -- (5.7, 0.3) node[right] {$ 0 $};
\foreach \i in {1, 2, 3} \path (H\i) ++(right:0.5) node {$ H $};
\foreach \i in {1, 2, 3} \path (I\i) ++(right:0.5) node {$ I $};
\foreach \i in {1, 2, 3} \path (R\i) ++(right:0.5) node {$ R $};
\end{tikzpicture}
\caption{The differential $ μ^1 $ in terms of a homological splitting}
\label{fig:kadeishvili-splitting-differential}
\end{figure}

\begin{remark}
In the context of homological splittings, we may use tuple notation to indicate an element of the direct sum. Moreover, we may write elements of $ \Im(μ^1) $ explicitly as $ μ^1 (r') $ where $ r' ∈ R $. In total, we may write an element of $ \Hom_{\cat C}  $ as
\begin{equation*}
x = (h, μ^1 (r'), r), \quad \text{with } h ∈ H, ~ r' ∈ R, ~ r ∈ R.
\end{equation*}
\end{remark}

Let us set up two more pieces of terminology. The first is the codifferential $ h $, which is a zero extension of the inverse of the bijection $ μ^1 \restr{R}: R → I $. The second is the projection to $ H $.

\begin{definition}
Let $ H ⊕ I ⊕ R $ be a homological splitting for $ \cat C $. Then the \emph{codifferential} is the map
\begin{align*}
h: \Hom_{\cat C}  &→ R, \\
(h, μ^1 (r'), r) &↦ r', \quad h ∈ H, ~ r' ∈ R, ~ r ∈ R.
\end{align*}
The \emph{projection to cohomology} is the map
\begin{equation*}
π: \Hom_{\cat C} = H ⊕ I ⊕ R \twoheadrightarrow H.
\end{equation*}
\end{definition}

A small abuse of notation consists in the fact that we typically denote elements of the space $ H $ by the letter $ h $. Typically, there seems to be little chance of confusion.

\subsection{The classical Kadeishvili theorem}
\label{sec:kadeishvili-classical}
In this section we recall the classical Kadeishvili theorem. This serves as a preparation for our deformed Kadeishvili theorem and fixes pieces of notation. We follow the construction by means of trees, as given in \cite[Chapter 6, 3.3.2]{Kontsevich-Soibelman}. A good reference is also \cite[Section 3.2]{Bocklandt-book}.

To start with, we recall the standard notion of minimal $ A_∞ $-categories and minimal models:

\begin{definition}
An $ A_∞ $-category is \emph{minimal} if its differential vanishes. Let $ \cat C $ and $ \cat D $ be an $ A_∞ $-categories. Then $ \cat D $ is a \emph{minimal model} for $ \cat C $ if $ \cat C $ and $ \cat D $ are quasi-isomorphic and $ \cat D $ is minimal.
\end{definition}

The intention of the Kadeishvili construction is to construct minimal models explicitly. The starting point for the construction is a homological splitting $ H ⊕ I ⊕ R $. The result of the construction is an $ A_∞ $-structure on $ H = \{H(X, Y)\}_{X, Y ∈ \cat C} $ which can also be interpreted as an $ A_∞ $-structure on $ \H\Hom_{\cat C} = \{\H\Hom_{\cat C} (X, Y)\}_{X, Y ∈ \cat C} $, since $ H $ and $ \H\Hom_{\cat C} $ are isomorphic as graded vector spaces through the composition $ H \embeds \Ker(μ^1) \twoheadrightarrow \H\Hom_{\cat C} $. Specifically, the $ A_∞ $-structure on $ H $ is defined via trees. We fix terminology as follows:

\begin{definition}
\label{def:kadeishvili-classical-treeshape}
A \emph{Kadeishvili tree shape} $ T $ is a rooted planar tree with $ n ≥ 2 $ leaves whose non-leaf nodes all have at least 2 children. A node in $ T $ is \emph{internal} if it is not a leaf and not the root. The number of internal nodes in $ T $ is denoted $ N_T $. We denote by $ \mathcal{T}_n $ the set of all Kadeishvili tree shapes with $ n $ leaves.

A \emph{Kadeishvili π-tree} $ (T, h_1, …, h_n) $ is a Kadeishvili tree shape $ T ∈ \mathcal{T}_n $ with $ n ≥ 2 $ leaves, together with a sequence $ h_1, …, h_n $ of cohomology elements $ h_i ∈ H(X_i, X_{i+1}) $. Decorate the leaves by $ h_1, …, h_n $ in sequence. Decorate every non-root node with the operation $ hμ $ and the root with the operation $ πμ $. Then the \emph{result} $ \Res(T, h_1, …, h_n) ∈ H(X_1, X_{n+1}) $ of the Kadeishvili π-tree is the result obtained by evaluating the tree from leaves to the root, according to the decorations.
\end{definition}

In other words, to evaluate a π-tree one inserts the inputs at the leaves, applies $ hμ $ at every internal node and $ πμ $ at the root. In every evaluation step, the map $ μ $ is some $ k $-ary product of $ \cat C $ and yields an output in some hom space $ \Hom_{\cat C} (X_i, X_j) $. The subsequent application of $ h $ refers to the codifferential of that hom space $ \Hom_{\cat C} (X_i, X_j) $.

\begin{example}
\autoref{fig:kadeishvili-classical-shapes} depicts all tree shapes with $ n = 2 $ and $ n = 3 $ leaves, as well as a sample tree shape for $ n = 4 $. \autoref{fig:kadeishvili-classical-trees} shows how the sample tree shapes of \autoref{fig:kadeishvili-classical-shapes} together with the input sequences $ h_1, h_2 $ or $ h_1, h_2, h_3 $ or $ h_1, …, h_4 $ get decorated. Explicitly, the results of the first three π-trees read $ π μ^2 (h_2, h_1) $, $ π μ^3 (h_3, h_2, h_1) $, $ π μ^2 (hμ^2 (h_3, h_2), h_1) $.
\end{example}

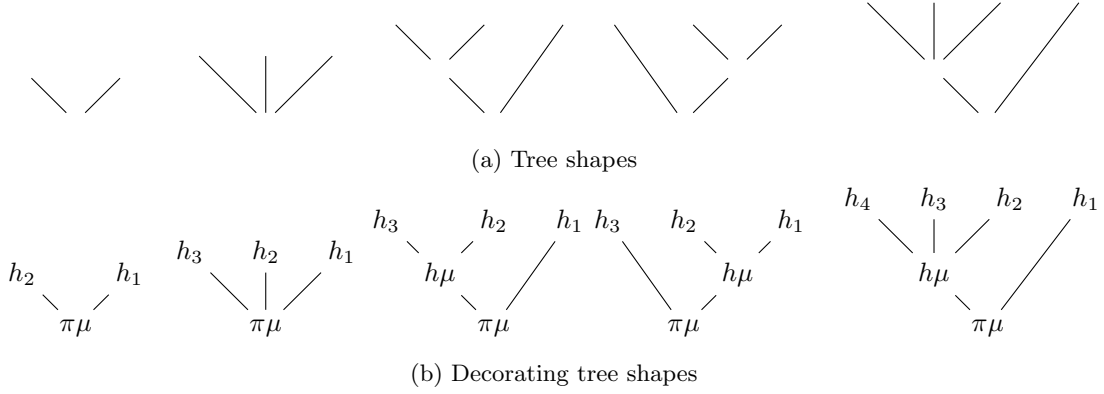
\begin{figure}
\centering
\begin{subfigure}{0.98\linewidth}
\centering
\begin{tikzpicture}
\path node (R1) {}
node[above left of=R1] (A1) {} edge (R1)
node[above right of=R1] (A2) {} edge (R1);
\path (2.5, 0) node (R2) {}
node[above of=R2] (B2) {} edge (R2)
node[left of=B2] {} edge (R2)
node[right of=B2] {} edge (R2);
\path (5.5, 0) node (R3) {}
node[above left of=R3] (C1) {} edge (R3)
node[above left of=C1] (C2) {} edge (C1)
node[above right of=C1] (C3) {} edge (C1)
node[right of=C3] {} edge (R3);
\path (8, 0) node (R4) {}
node[above right of=R4] (D1) {} edge (R4)
node[above right of=D1] (D2) {} edge (D1)
node[above left of=D1] (D3) {} edge (D1)
node[left of=D3] {} edge (R4);
\path (12, 0) node (R5) {}
node[above left of=R5] (E1) {} edge (R5)
node[above of=E1] (E4) {} edge (E1)
node[right of=E4] (E2) {} edge (E1)
node[left of=E4] (E3) {} edge (E1)
node[right of=E2] {} edge (R5);
\end{tikzpicture}
\caption{Tree shapes}
\label{fig:kadeishvili-classical-shapes}
\end{subfigure}
\begin{subfigure}{0.98\linewidth}
\centering
\begin{tikzpicture}
\path node (R1) {$ πμ $}
node[above left of=R1] (A1) {$ h_2 $} edge (R1)
node[above right of=R1] (A2) {$ h_1 $} edge (R1);
\path (2.5, 0) node (R2) {$ πμ $}
node[above of=R2] (B2) {$ h_2 $} edge (R2)
node[left of=B2] {$ h_3 $} edge (R2)
node[right of=B2] {$ h_1 $} edge (R2);
\path (5.5, 0) node (R3) {$ πμ $}
node[above left of=R3] (C1) {$ hμ $} edge (R3)
node[above left of=C1] (C2) {$ h_3 $} edge (C1)
node[above right of=C1] (C3) {$ h_2 $} edge (C1)
node[right of=C3] {$ h_1 $} edge (R3);
\path (8, 0) node (R4) {$ πμ $}
node[above right of=R4] (D1) {$ hμ $} edge (R4)
node[above right of=D1] (D2) {$ h_1 $} edge (D1)
node[above left of=D1] (D3) {$ h_2 $} edge (D1)
node[left of=D3] {$ h_3 $} edge (R4);
\path (12, 0) node (R5) {$ πμ $}
node[above left of=R5] (E1) {$ hμ $} edge (R5)
node[above of=E1] (E4) {$ h_3 $} edge (E1)
node[right of=E4] (E2) {$ h_2 $} edge (E1)
node[left of=E4] (E3) {$ h_4 $} edge (E1)
node[right of=E2] {$ h_1 $} edge (R5);
\end{tikzpicture}
\caption{Decorating tree shapes}
\label{fig:kadeishvili-classical-trees}
\end{subfigure}
\caption{Illustration of Kadeishvili tree shapes and Kadeishvili π-trees}
\end{figure}

\begin{remark}
We have two conventions regarding the order of inputs. Indeed, we sometimes order the elements as $ h_1, …, h_n $ and sometimes as $ h_n, …, h_1 $. The convention is that morphisms $ h_1, …, h_n $ indexed by a set of numbers are always compatible in ascending order: $ h_1 $ is a morphism $ X_1 → X_2 $, while $ h_2 $ is a morphism $ X_2 → X_3 $ etc. In particular, due to our “Polish notation” convention of writing $ A_∞ $-products, the product of the sequence $ h_1, …, h_n $ is written in opposite order as $ μ^n (h_n, …, h_1) $. The way we draw trees, for example in \autoref{fig:kadeishvili-classical-trees}, is also in opposite order. In contrast, wherever we refer to the sequence as a whole and evaluation is not immediate, we write the sequence in natural order. An example of natural order is the expression $ \Res(T, h_1, …, h_n) $.
\end{remark}

The construction of the $ A_∞ $-product $ μ_H $ on $ H = \H\cat C $ can be summarized as follows: Let $ h_1, …, h_n $ be cohomology elements with $ h_i ∈ H(X_i, X_{i+1}) $. Then their higher product is defined as
\begin{equation*}
μ_{\H\cat C} (h_n, …, h_1) = \sum_{T ∈ \mathcal{T}_n} (-1)^{N_T} \Res(T, h_1, …, h_n) ∈ H(X_1, X_{n+1}).
\end{equation*}

\begin{example}
The first higher products of $ \H\cat C $ read as follows:
\begin{align*}
μ^1_{\H\cat C} &= 0, \\
μ^2_{\H\cat C} (h_2, h_1) &= π μ^2 (h_2, h_1), \\
μ^3_{\H\cat C} (h_3, h_2, h_1) &= π μ^3 (h_3, h_2, h_1) - π μ^2 (h μ^2 (h_3, h_2), h_1) - π μ^2 (h_3, h μ^2 (h_2, h_1)).
\end{align*}
\end{example}

The Kadeishvili theorem claims that $ \H\cat C $ together with the product $ μ_{\H\cat C} $ is a minimal model for $ \cat C $. The specific version of the theorem is taken from \cite[Chapter 6, 3.3.2, 3.3.3]{Kontsevich-Soibelman}. This source does not provide any sign rule, but we have verified the correctness of the signs $ (-1)^{N_T} $ in \autoref{th:kadeishvili-auxiliary-Hq}.

\begin{theorem}[Kadeishvili]
Let $ \cat C $ be an $ A_∞ $-category. Then the products $ μ^k_{\H\cat C} $ form a minimal $ A_∞ $-structure on $ \H\cat C $. There is a quasi-isomorphism $ F: \H\cat C → \cat C $ which can be constructed by trees as well. Its 1-ary component $ F^1 $ is the standard inclusion $ \H\cat C = H \embeds \cat C $.
\end{theorem}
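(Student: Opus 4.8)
The plan is to obtain the theorem from homological perturbation theory and then identify the resulting formulas with the tree sums. The homological splitting $\Hom_{\cat C} = H \oplus I \oplus R$ assembles into a contraction onto cohomology: besides the inclusion $i\colon H \hookrightarrow \Hom_{\cat C}$ and the projection $\pi\colon \Hom_{\cat C} \twoheadrightarrow H$, the codifferential $h$ of the splitting is a degree $-1$ homotopy. Reading off the definitions on the three summands — using $I \cong R$ via $\mu^1$ as recorded after \eqref{eq:kadeishvili-splitting-shape} — one checks the contraction identities
\begin{equation*}
\pi i = \id_H, \qquad \id - i\pi = \mu^1 h + h \mu^1, \qquad h^2 = 0, \quad \pi h = 0, \quad h i = 0,
\end{equation*}
so that $(\pi, i, h)$ is a special deformation retract from $(\Hom_{\cat C}, \mu^1)$ onto $(H, 0)$.

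First I would package the $A_\infty$-data as coalgebra data: the structure $\mu_{\cat C}$ becomes a square-zero degree-$1$ coderivation $b$ of the tensor coalgebra $T^c(\Hom_{\cat C}[1])$, a minimal $A_\infty$-structure on $H$ becomes a coderivation $b_H$ of $T^c(H[1])$ with vanishing linear part, and an $A_\infty$-functor $\H\cat C \to \cat C$ becomes a counital coalgebra morphism $T^c(H[1]) \to T^c(\Hom_{\cat C}[1])$ intertwining $b_H$ and $b$. The contraction lifts to the tensor coalgebras by the usual tensor-trick formulas, giving a special deformation retract of $(T^c(\Hom_{\cat C}[1]), b_1)$ onto $(T^c(H[1]), 0)$, where $b_1$ is the part of $b$ coming from $\mu^1$. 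Since $\cat C$ is honestly $A_\infty$ (no curvature), the perturbation $b - b_1 = \sum_{k \geq 2} b_k$ is locally nilpotent for the length filtration of $T^c$, so the homological perturbation lemma applies verbatim and produces, out of the geometric series in $h(b-b_1)$, a transferred codifferential $b_H$ and a transferred coalgebra morphism $F\colon T^c(H[1]) \to T^c(\Hom_{\cat C}[1])$. Automatically $b_H^2 = 0$, i.e.\ $\{\mu^k_{\H\cat C}\}$ is a minimal $A_\infty$-structure; $F$ intertwines the structures; and $F^1 = i$, the standard inclusion $H \hookrightarrow \Hom_{\cat C}$.

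Next I would unwind the geometric series componentwise. A term of $b_H$ on $n$ inputs is a composite alternating applications of the products $\mu^{\geq 2}$ of $\cat C$ (the "$b$") with applications of $h$, capped by a final $\pi\mu$; organizing the inner products into a rooted planar tree shows these terms are exactly indexed by the Kadeishvili tree shapes $T \in \mathcal{T}_n$, internal nodes decorated $h\mu$ and root decorated $\pi\mu$, so that $\mu^n_{\H\cat C}(h_n, \dots, h_1) = \sum_{T \in \mathcal{T}_n} \pm\, \Res(T, h_1, \dots, h_n)$. Likewise each $F^n$ is the sum over trees whose root carries $h\mu$ instead of $\pi\mu$. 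The number of inner products in a term equals $N_T$, giving the HPL sign $(-1)^{N_T}$ up to a global sign from the Koszul rules; reconciling it with the sign convention of the tree definition is exactly the computation in \autoref{th:kadeishvili-auxiliary-Hq}, which I would cite here. Finally, $F$ is a quasi-isomorphism: since $\H\cat C$ is minimal, $\H F$ is the map $H \to \H\Hom_{\cat C}$ induced by $F^1 = i$ followed by passing to cohomology classes, and this is an isomorphism because $H$ was chosen as a space of cocycle representatives of the cohomology of $(\Hom_{\cat C}, \mu^1)$.

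The main obstacle is the sign reconciliation: translating between the Koszul signs implicit in "coderivation of $T^c(\Hom_{\cat C}[1])$" and the explicit $A_\infty$-signs of \autoref{def:ainfty-def}, and matching the resulting HPL sign with the $(-1)^{N_T}$ of the tree definition. The source \cite{Kontsevich-Soibelman} gives no sign rule, which is precisely why the paper defers this bookkeeping to \autoref{th:kadeishvili-auxiliary-Hq}; modulo that check, the remaining steps — verifying the contraction identities, invoking the perturbation lemma, and re-expressing the geometric series as a sum over trees — are routine.
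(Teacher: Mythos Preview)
Your approach via homological perturbation theory is correct and is a well-known route to Kadeishvili's theorem. Note, however, that the paper does not give a standalone proof of this classical statement: it is recalled from \cite[Chapter 6, 3.3.2, 3.3.3]{Kontsevich-Soibelman}, and the only verification the paper itself performs is the sign check, which is absorbed into the proof of \autoref{th:kadeishvili-auxiliary-Hq} (the deformed auxiliary lemma, which specializes to the classical case when the deformation is trivial). That proof is a bare-hands induction: one checks the $A_\infty$-functor relation for $F_q$ by projecting both sides onto the three summands $H_q$, $\mu^1_q(B\htensor R)$, $B\htensor R$ separately, and then bootstraps the $A_\infty$-relations for $\mu_{H_q}$ from the functor relations.

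The contrast is thus: the paper's argument is elementary and self-contained (no coalgebra language, no perturbation lemma), at the cost of a somewhat opaque three-way case split and an inductive step that hides where the signs come from. Your HPL argument is more conceptual --- the $A_\infty$-relations and the functor relations come for free from $b_H^2 = 0$ and the fact that the perturbed maps are coalgebra morphisms --- and it explains \emph{why} the answer is a sum over planar trees rather than discovering this by inspection. The price you pay is exactly the sign reconciliation you flag: the HPL literature typically works with the bar-complex signs, and matching these to the paper's convention $(-1)^{N_T}$ under \autoref{def:ainfty-def} is genuine bookkeeping that you have not carried out. Citing \autoref{th:kadeishvili-auxiliary-Hq} for this is reasonable but slightly circular, since that lemma is proved by the direct method rather than via HPL; a fully independent HPL proof would need its own sign computation.
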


\subsection{Deformed differentials}
\label{sec:kadeishvili-deformed}
In this section, we analyze differentials of deformed $ A_∞ $-categories in detail. The starting point is a deformed $ A_∞ $-category $ \cat C_q $ together with a homological splitting of $ \cat C $. The homological splitting of $ \cat C $ is naturally not a homological splitting for $ \cat C_q $. However, one may try find a decomposition of the hom spaces of $ \cat C_q $ with properties that at least resemble those of a homological splitting. The idea is to deform the spaces involved in the homological splitting in order to account for the deformed differential. In the present section, we construct these deformed decompositions and state all properties.

\begin{center}
\begin{tikzpicture}
\path (0, 0) node[align=center] (A) {Homological splitting \\ $ \Hom_{\cat C} = H ⊕ I ⊕ R $} (9, 0) node[align=center] (B) {Deformed decomposition \\ $ \Hom_{\cat C_q} = H_q ⊕ μ^1_q (B \htensor R) ⊕ (B \htensor R) $};
\path[draw, ->] ($ (A.east)!0.15!(B.west) $) -- ($ (A.east)!0.85!(B.west) $) node[midway, above] {upon deformation};
\end{tikzpicture}
\end{center}

The direct approach of \autoref{sec:kadeishvili-classical} fails in the deformed context. In fact, one of the reasons the construction with trees works well in the classical case is that we have $ μ^1 (H) = 0 $. However, it need not be the case that $ μ^1_q (H) = 0 $. This makes the direct description of the minimal model by Kadeishvili trees fail in the deformed context.

The present section provides a workaround. A glance at \cite[Chapter 6, 3.3]{Kontsevich-Soibelman} shows that it suffices to require $ μ^1_q (H) ⊂ H $ instead of $ μ^1_q (H) = 0 $. Even better, we may try in the deformed context to find an infinitesimal deformation $ H_q $ of $ H $ with $ μ^1_q (H_q) ⊂ H_q $. Exploiting this observation is the strategy of our deformed Kadeishvili theorem. The present section is devoted to finding this deformation $ H_q $.

A point of attention is the requirement of a minimal model of $ \cat C_q $ to be a deformation of $ \H\cat C $. This entails that the hom spaces be identified as $ B \htensor \Hom_{\H\cat C}  $ and that the leading term of the products is the product $ μ_{\cat C} $. The present section has been purpose-built to keep track of the identification of $ H_q $ and $ B \htensor H $.

In order to find $ H_q $, we have to analyze the precise shape of the differential $ μ^1_q $. In \autoref{sec:kadeishvili-splitting}, we have seen that a differential on an ordinary $ A_∞ $-category can be written in matrix form. Most matrix entries vanish because of the $ A_∞ $-relations. For deformed $ A_∞ $-categories, we can still write down $ μ^1_q $ in matrix form with respect to $ H ⊕ I ⊕ R $, although no entries vanish by default. A first step is to change $ I $ to $ μ^1_q (B \htensor R) $, which already renders two matrix entries zero:

\begin{lemma}
\label{th:kadeishvili-deformed-badsplitting}
Let $ \cat C $ be an $ A_∞ $-category and $ \cat C_q $ a deformation. Let $ H ⊕ I ⊕ R $ be a homological splitting of $ \cat C $. Then the differential $ μ^1_q $ restricted to $ B \htensor R $ is injective:
\begin{equation}
\label{eq:kadeishvili-deformed-identification}
μ^1_q: B \htensor R ~\isoto~ μ^1_q (B \htensor R),
\end{equation}
and we have a direct sum decomposition of $ B $-modules
\begin{equation}
\label{eq:kadeishvili-deformed-badsplitting}
B \htensor \Hom_{\cat C} = (B \htensor H) ⊕ μ^1_q (B \htensor R) ⊕ (B \htensor R).
\end{equation}
With respect to this decomposition, $ μ^1_q $ takes the shape
\begin{equation}
\label{eq:kadeishvili-deformed-matrixshape}
μ^1_q = \pmat{D & * & 0 \\ μ^1_q E & * & * \\ F & * & 0}
\end{equation}
for some operators
\begin{align*}
D: B \htensor H &→ B \htensor H, \\
E: B \htensor H &→ B \htensor R, \\
F: B \htensor H &→ B \htensor R.
\end{align*}
\end{lemma}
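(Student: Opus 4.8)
The plan is to derive all three assertions from \autoref{th:prelim-htensor-isocriterion}, the criterion that a $B$-linear map between completed tensor products is an embedding (respectively an isomorphism) as soon as its leading term is injective (respectively bijective). The only input about $\cat C_q$ that is needed is that the leading term of $\mu^1_q$ is the undeformed differential $\mu^1$; the only input about the homological splitting is that $\mu^1$ annihilates $H \oplus I$ and restricts to a linear isomorphism $\mu^1|_R \colon R \isoto I$, both recalled in \autoref{sec:kadeishvili-splitting}. Throughout I would write $\mathbf H := B \htensor H$ and $\mathbf R := B \htensor R$.

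For the injectivity statement \eqref{eq:kadeishvili-deformed-identification}, I would apply \autoref{th:prelim-htensor-isocriterion} to the $B$-linear map $\mu^1_q|_{\mathbf R} \colon \mathbf R \to B \htensor \Hom_{\cat C}$. Its leading term is $\mu^1|_R \colon R \to \Hom_{\cat C}$, which is injective because $R$ meets $\Ker(\mu^1) = H \oplus I$ trivially. Hence $\mu^1_q|_{\mathbf R}$ is an embedding with closed image, in particular injective, and restricts to an isomorphism onto $\mathbf I' := \mu^1_q(\mathbf R)$. For the decomposition \eqref{eq:kadeishvili-deformed-badsplitting}, I would assemble the $B$-linear map
\begin{equation*}
\Phi \colon \mathbf H \oplus \mathbf R \oplus \mathbf R \to B \htensor \Hom_{\cat C}, \qquad (h, r_1, r_2) \mapsto h + \mu^1_q(r_1) + r_2,
\end{equation*}
whose leading term $(h, r_1, r_2) \mapsto h + \mu^1(r_1) + r_2$ is a linear isomorphism onto $H \oplus I \oplus R = \Hom_{\cat C}$ precisely because $\mu^1|_R \colon R \isoto I$. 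Thus $\Phi$ is an isomorphism by \autoref{th:prelim-htensor-isocriterion}, and transporting the evident internal direct sum on the source through $\Phi$ yields $B \htensor \Hom_{\cat C} = \mathbf H \oplus \mathbf I' \oplus \mathbf R$.

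For the matrix shape \eqref{eq:kadeishvili-deformed-matrixshape} I would read off the components of $\mu^1_q$ relative to $\mathbf H \oplus \mathbf I' \oplus \mathbf R$. The third column is immediate: since $\mu^1_q(\mathbf R) = \mathbf I'$ by the definition of $\mathbf I'$, the $\mathbf H$- and $\mathbf R$-components of $\mu^1_q|_{\mathbf R}$ vanish, which are the zeros in positions $(1,3)$ and $(3,3)$. For the first column, decompose $\mu^1_q(h) = D(h) + (\text{middle part}) + F(h)$ with the three summands in $\mathbf H$, $\mathbf I'$, $\mathbf R$ respectively; this defines $B$-linear operators $D \colon \mathbf H \to \mathbf H$ and $F \colon \mathbf H \to \mathbf R$ as coordinate projections of $\mu^1_q|_{\mathbf H}$. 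Since the middle part lies in $\mathbf I' = \mu^1_q(\mathbf R)$ and $\mu^1_q|_{\mathbf R}$ is injective, there is a unique $E(h) \in \mathbf R$ with $\mu^1_q(E(h))$ equal to that middle part; the resulting $E = (\mu^1_q|_{\mathbf R})^{-1} \circ (\text{middle projection})$ is $B$-linear, and $D$, $E$, $F$ are automatically continuous by \autoref{th:prelim-htensor-autocontinuous}. Hence the first column of $\mu^1_q$ is $\pmat{D \\ \mu^1_q E \\ F}$, the middle column is unconstrained, and this is exactly \eqref{eq:kadeishvili-deformed-matrixshape}.

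I do not expect a genuine obstacle: the argument is linear bookkeeping, and no nilpotence or convergence hypothesis enters because every map in sight is $B$-linear between completed tensor products, so \autoref{th:prelim-htensor-isocriterion} and \autoref{th:prelim-htensor-autocontinuous} carry the entire analytic load. The only point that warrants a moment's care is checking that the two leading terms used ($\mu^1|_R$, and the leading term of $\Phi$) are injective respectively bijective, which is immediate from the defining relations $I = \Im(\mu^1)$ and $\Ker(\mu^1) = H \oplus I$ of the homological splitting.
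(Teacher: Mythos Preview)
Your proposal is correct and follows essentially the same approach as the paper: both apply \autoref{th:prelim-htensor-isocriterion} to $\mu^1_q|_{B\htensor R}$ for injectivity, then to the ``change of splitting'' map $(h,r_1,r_2)\mapsto h+\mu^1_q(r_1)+r_2$ for the direct sum decomposition, and finally read off $D,E,F$ as projections with the third-column zeros coming from $\mu^1_q(\mathbf R)=\mathbf I'$ by definition. Your packaging of the second step is in fact slightly cleaner than the paper's, which derives spanning and directness in two separate sentences rather than getting both at once from $\Phi$ being an isomorphism.
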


\begin{proof}
First of all, regard the map
\begin{equation*}
μ^1_q: B \htensor R → B \htensor \Hom_{\cat C} .
\end{equation*}
It is $ B $-linear and has leading term the injective map $ μ^1_{\cat C} \restr{R} $. By \autoref{th:prelim-htensor-isocriterion}, it is an embedding. This establishes the first claim.

Second, let us prove the direct sum decomposition. Intuitively, changing the summand $ μ^1 (B \htensor R) $ to $ μ^1_q (B \htensor R) $ constitutes only an infinitesimal change and should leave the decomposition intact. Formally, define the map
\begin{align*}
ψ: \Hom_{\cat C_q}  &→ B \htensor H + μ^1_q (B \htensor R) + B \htensor R \embeds \Hom_{\cat C_q} , \\
(h, μ^1 (r), r') &↦ h + μ^1_q (r) + r', \quad \text{for} \quad h ∈ B \htensor H, \quad r ∈ B \htensor R, \quad r' ∈ B \htensor R.
\end{align*}
The map $ ψ $ has leading term the identity. By \autoref{th:prelim-htensor-autocontinuous}, it is an isomorphism onto $ \Hom_{\cat C_q}  $. In particular, this already establishes that \eqref{eq:kadeishvili-deformed-badsplitting} is a sum decomosition, not necessarily direct. To prove the sum decomposition direct, let $ h ∈ B \htensor H $, $ μ^1_q (r) ∈ μ^1_q (B \htensor R) $ and $ r' ∈ B \htensor R $ with $ h + μ^1_q (r) + r' = 0 $ in $ \Hom_{\cat C_q}  $. This implies $ ψ(h, μ^1_q (r), r') = 0 $ and finally $ h = r = r' = 0 $ since $ ψ $ is an isomorphism and $ μ^1_q \restr{B \htensor R} $ is injective. We conclude that \eqref{eq:kadeishvili-deformed-badsplitting} is a direct sum decomposition.

To obtain the claimed matrix presentation of $ μ^1_q $, simply define $ D $ and $ F $ as $ μ^1_{B \htensor H} $ followed by the projections to $ B \htensor H $ and $ B \htensor R $, respectively. Define $ E $ as $ μ^1_{B \htensor H} $ followed by projection to $ μ^1_q (B \htensor R) $ and the inverse of $ μ^1_q: B \htensor R \isoto μ^1_q (B \htensor R) $. The vanishing of the two indicated matrix entries is immediate, since $ μ^1_q $ sends $ B \htensor R $ to $ μ^1_q (B \htensor R) $ by definition. This settles all claims.
\end{proof}

We are now ready to define $ H_q $.

\begin{lemma}
\label{th:kadeishvili-deformed-splitting}
Let $ \cat C $ be an $ A_∞ $-category and $ \cat C_q $ a deformation. Assume $ H ⊕ I ⊕ R $ is a homological splitting for $ \cat C $. Let $ D, E, F $ denote the operators from \autoref{th:kadeishvili-deformed-badsplitting}. Put
\begin{equation*}
H_q ≔ \{h - Eh \running h ∈ B \htensor H\}.
\end{equation*}
Then we have a direct sum decomposition
\begin{equation}
\label{eq:kadeishvili-deformed-splitting}
B \htensor \Hom_{\cat C} = H_q ⊕ μ^1_q (B \htensor R) ⊕ (B \htensor R).
\end{equation}
It holds that $ μ^1_q (H_q) ⊂ H_q ⊕ B \htensor R $. With respect to this decomposition, $ μ^1_q $ has the shape
\begin{equation*}
μ^1_q = \pmat{* & * & 0 \\ * & * & * \\ 0 & * & 0}.
\end{equation*}
\end{lemma}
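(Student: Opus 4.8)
The plan is to transport the decomposition of \autoref{th:kadeishvili-deformed-badsplitting} onto the one claimed here by an isomorphism whose leading term is the identity, and then to read off the shape of $\mu^1_q$ from a one-line computation with the matrix \eqref{eq:kadeishvili-deformed-matrixshape}. As a preparation I would record that the operators $D$, $E$, $F$ of \autoref{th:kadeishvili-deformed-badsplitting} are infinitesimal: since $\cat C_q$ is a deformation of $\cat C$, the leading term of $\mu^1_q$ is $\mu^1_{\cat C}$, and the defining property of a homological splitting gives $\mu^1_{\cat C}(H) = 0$, so $\mu^1_q(B \htensor H) \subset \mathfrak m\,(B \htensor \Hom_{\cat C})$. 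The projections along \eqref{eq:kadeishvili-deformed-badsplitting} and the inverse of the isomorphism \eqref{eq:kadeishvili-deformed-identification} are $B$-linear, hence preserve the $\mathfrak m$-adic filtration by \autoref{th:prelim-htensor-autocontinuous}, and therefore $D$, $E$, $F$ all take values in $\mathfrak m$.

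For the direct sum decomposition \eqref{eq:kadeishvili-deformed-splitting}, let $\pi_H \colon B \htensor \Hom_{\cat C} \to B \htensor H$ be the projection along \eqref{eq:kadeishvili-deformed-badsplitting} and put $\psi \coloneqq \Id - E\pi_H \colon B \htensor \Hom_{\cat C} \to B \htensor \Hom_{\cat C}$. Because $E$ is infinitesimal, $\psi$ has leading term the identity and is thus a $B$-linear isomorphism by \autoref{th:prelim-htensor-isocriterion}. On the summand $B \htensor H$ the map $\psi$ acts by $h \mapsto h - Eh$, so $\psi(B \htensor H) = H_q$, whereas on $\mu^1_q(B \htensor R)$ and on $B \htensor R$ it is the identity. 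Applying the isomorphism $\psi$ to the direct sum \eqref{eq:kadeishvili-deformed-badsplitting} therefore yields the direct sum \eqref{eq:kadeishvili-deformed-splitting}, and at the same time identifies $H_q$ with $B \htensor H$ via $h \mapsto h - Eh$.

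Finally, for the shape of $\mu^1_q$: by \eqref{eq:kadeishvili-deformed-matrixshape} one has $\mu^1_q(h) = Dh + \mu^1_q(Eh) + Fh$ for $h \in B \htensor H$, with the three summands lying in $B \htensor H$, $\mu^1_q(B \htensor R)$, $B \htensor R$ respectively, so
\[
\mu^1_q(h - Eh) = \mu^1_q(h) - \mu^1_q(Eh) = Dh + Fh \in (B \htensor H) \oplus (B \htensor R).
\]
Since $B \htensor H \subset H_q \oplus (B \htensor R)$ — write $a = (a - Ea) + Ea$ — this yields $\mu^1_q(H_q) \subset H_q \oplus (B \htensor R)$, i.e. the $\mu^1_q(B \htensor R)$-component of $\mu^1_q$ on $H_q$ vanishes; the remaining zero entries are immediate because $\mu^1_q(B \htensor R)$ lies in the summand $\mu^1_q(B \htensor R)$ by its very definition, so $\mu^1_q$ has no $H_q$- or $(B \htensor R)$-component on $B \htensor R$. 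Reading these vanishings off the matrix gives the asserted shape. The computations are routine; the only point that needs care is that replacing $B \htensor H$ by the perturbed space $H_q$ still leaves a \emph{direct} sum decomposition, which is exactly what the leading-term criterion \autoref{th:prelim-htensor-isocriterion} supplies via the auxiliary isomorphism $\psi$. Conceptually, $E$ was chosen in \autoref{th:kadeishvili-deformed-badsplitting} precisely so that the ``image part'' $\mu^1_q(Eh)$ of $\mu^1_q(h)$ is cancelled by $\mu^1_q(-Eh)$, and this cancellation is what frees $\mu^1_q(H_q)$ of a $\mu^1_q(B \htensor R)$-component — the property needed for the tree construction of the minimal model.
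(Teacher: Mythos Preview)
Your proof is correct and follows essentially the same route as the paper's: the paper handles the direct sum by saying it ``is achieved easily as in the proof of \autoref{th:kadeishvili-deformed-badsplitting}, namely $H_q$ and $B\htensor H$ only differ by $R$-terms,'' and then performs the identical one-line computation $\mu^1_q(h-Eh)=Dh+Fh$. Your explicit isomorphism $\psi=\Id-E\pi_H$ with leading term the identity is exactly the mechanism the paper's reference to \autoref{th:kadeishvili-deformed-badsplitting} and \autoref{th:prelim-htensor-isocriterion} is pointing at, just spelled out.

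One remark: the displayed matrix in the statement has a typo --- the first column should read $(*,0,*)^{\!T}$ rather than $(*,*,0)^{\!T}$, since what both you and the paper actually prove is $\mu^1_q(H_q)\subset H_q\oplus(B\htensor R)$, i.e.\ the $\mu^1_q(B\htensor R)$-component vanishes, not the $B\htensor R$-component. So your sentence ``reading these vanishings off the matrix gives the asserted shape'' is not literally true for the matrix as printed; you have established the intended (and later-used) inclusion, which is the one in the text of the lemma.
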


\begin{proof}
The decomposition is achieved easily as in the proof of \autoref{th:kadeishvili-deformed-badsplitting}. Namely, $ H_q $ and $ B \htensor H $ only differ by $ R $-terms. To show $ μ^1_q (H_q) ⊂ H_q ⊕ B \htensor R $, we calculate
\begin{equation*}
μ^1_q (h - Eh) = Dh + μ^1_q E (h) + Fh - μ^1_q (Eh) = Dh + Fh ∈ B \htensor H ⊕ B \htensor R = H_q ⊕ B \htensor R.
\end{equation*}
This finishes the proof.
\end{proof}

The decomposition \eqref{eq:kadeishvili-deformed-splitting} plays a crucial role throughout this paper. It is not a homological splitting of $ \cat C_q $ in any sense, since for example $ μ^1_q $ need not vanish on $ H_q $. The decomposition is however an important prerequisite for our deformed Kadeishvili theorem. In particular, whenever computing minimal models of deformed $ A_∞ $-categories, this decomposition needs to be calculated first.

There is a natural identification between $ B \htensor H $ and $ H_q $. The identification associated an element $ h ∈ B \htensor H $ with $ h - Eh ∈ H_q $. Since $ Eh ∈ B \htensor R $, we can recover $ h $ from $ h-Eh $ by stripping off the $ R $ component. This identification plays an important role in this paper. Another important role is played by the map $ μ^1_q: B \htensor R → μ^1_q (B \htensor R) $. We call the inverse of this map the deformed codifferential. Let us fix all important notions in the following definition.

\begin{definition}
\label{def:kadeishvili-deformed-counterpart}
Let $ \cat C $ be an $ A_∞ $-category and $ H ⊕ I ⊕ R $ a homological splitting. Let $ \cat C_q $ be a deformation of $ \cat C $. The \emph{deformed decomposition} of $ \cat C_q $ is the collection of direct sum decompositions \eqref{eq:kadeishvili-deformed-splitting} of all hom spaces in $ \cat C_q $. The \emph{deformed counterpart} of an element $ h ∈ B \htensor H $ is the element $ h - Eh ∈ H_q $. The correspondence between $ H_q $ and $ B \htensor H $ is denoted
\begin{align*}
φ: H_q &\verylongisoto B \htensor H, \\
h - Eh &\verylongmapsto h.
\end{align*}
The \emph{deformed codifferential} of $ \cat C_q $ is the $ R $-linear map
\begin{equation*}
h_q = (μ^1_q \restr{B \htensor R})^{-1}: μ^1_q (B \htensor R) \verylongto B \htensor R.
\end{equation*}
The \emph{deformed projection} of $ \cat C_q $ is the $ R $-linear map
\begin{equation*}
π_q: H_q ⊕ μ^1_q (B \htensor R) ⊕ (B \htensor R) → H_q.
\end{equation*}
\end{definition}

\subsection{Optimizing curvature}
\label{sec:kadeishvili-optimizing}
In this section, we show how to optimize curvature of an $ A_∞ $-category. In general, it is not possible to remove curvature from an $ A_∞ $-deformation entirely. For the purposes of our Kadeishvili theorem, it is however important to tame the curvature as much as possible. In this section, we show that the curvature of any deformation can be reduced sufficiently for our purpose of constructing a deformed Kadeishvili theorem.

\begin{center}
\begin{tikzpicture}
\path (0, 0) node (A) {Deformation $ \cat C_q $} (8, 0) node[align=center] (B) {Deformation $ \cat C_q^{\bigmax} $ with optimal curvature};
\path[draw, decorate, decoration={snake, amplitude=0.2em, post length=0.5em}, ->] ($ (A.east)!0.2!(B.west) $) to ($ (A.east)!0.8!(B.west) $);
\end{tikzpicture}
\end{center}

Let us start by fixing our terminology:

\begin{definition}
Let $ \cat C $ be an $ A_∞ $-category and $ \cat C_q $ a deformation. Let $ H ⊕ I ⊕ R $ be a homological splitting for $ \cat C $ and $ H_q ⊕ μ^1_q (B \htensor R) ⊕ (B \htensor R) $ be the associated deformed decomposition of $ \cat C_q $. Then $ \cat C_q $ has \emph{optimal curvature} if $ μ^0_q ∈ H_q $.
\end{definition}

In the remainder of the section, we show how to gauge an arbitrary deformed $ A_∞ $-category such that its curvature becomes optimal. We also explain why optimal curvature is the best we can expect. The idea to optimize the curvature is to apply successive gauges. All gauges will be gauge functors $ F $ of the form $ F^1 = \Id $ and $ F^0 = r $ and have no higher components. We may also call such functors “uncurving gauges” because they are strong at reducing curvature. The following definition settles our terminology.

\begin{definition}
\label{def:kadeishvili-optimizing-uncurving}
Let $ \cat C $ be an $ A_∞ $-category and $ \cat C_q $ a deformation. Let $ r = \{r_X\}_{X ∈ \cat C} $ be an element consisting of $ r_X ∈ \mathfrak{m} \End^1_{\cat C} (X) $ for every $ X ∈ \cat C $. Then the \emph{uncurving} of $ \cat C_q $ \emph{by} $ r $ is the category $ \cat C_q' $ obtained from adding $ r_X $ as twisted differential to every $ X ∈ \cat C_q $:
\begin{equation*}
\cat C_q' ≔ \{(X, r_X) \running X ∈ \cat C\} ⊂ \Tw'\cat C_q.
\end{equation*}
The notation $ \Tw'\cat C_q $ is taken from \autoref{th:twisted-deforming-delta}.
\end{definition}

\begin{remark}
Let $ \cat C_q' $ be the uncurving of $ \cat C_q $ by $ r $. Then the curvature of $ \cat C_q' $ is
\begin{equation*}
μ^0_{\cat C_q'} = μ^0_{\cat C_q} + μ^1_{\cat C_q} (r) + μ^2_{\cat C_q} (r, r) + ….
\end{equation*}
The uncurving $ \cat C_q' $ is naturally a deformation of $ \cat C $ and comes with a gauge equivalence
\begin{equation*}
F: \cat C_q' \isoto \cat C_q, \quad \text{given by} \quad F^0 ≔ r, \quad F^1 ≔ \Id, \quad F^{≥2} ≔ 0.
\end{equation*}
The category $ \cat C_q' $ can also be defined by forcing this particular map $ F $ to be a functor of deformations. More on uncurving can be found in \autoref{sec:uncurving-theory}, which focuses on cases where uncurving removes the curvature entirely.
\end{remark}

\begin{remark}
\label{rem:kadeishvili-optimizing-essential}
The name “uncurving” for the gauge in \autoref{def:kadeishvili-optimizing-uncurving} is a slight abuse of terminology: The curvature $ μ^0_{\cat C_q'} $ will not vanish, but has the chance to be less than $ μ^0_{\cat C_q} $. The term uncurving generally refer to any procedure of reducing curvature, while \autoref{def:kadeishvili-optimizing-uncurving} restricts usage of the term to a particular class of functors. According to the explanation in \autoref{sec:uncurving-theory}, this particular class of functors is however the only one that essentially changes curvature, therefore we have adopted the name “uncurving”.
\end{remark}

\begin{remark}
We can now explain the name “optimal curvature”. In fact, any other deformation gauge equivalent to a deformation with optimal curvature will generally have more curvature. To see this, regard a deformation $ \cat C_q $ with optimal curvature. Its curvature already lies in $ H_q $. If we apply uncurving by an element $ r $, the new curvature is $ μ^0_q + μ^1_q (r) + … $. If we choose $ r ∈ B \htensor R $, then $ μ^1_q (r) $ naturally lies in $ μ^1_q (B \htensor R) $ which already downgrades the curvature. If we choose $ r ∈ μ^1_q (B \htensor R) $ or $ r ∈ H_q $, then $ μ^1_q (r) $ typically contains components from $ R $ or $ μ^1_q (B \htensor R) $ as well. The additional summands $ μ^2_q (r, r) $ even worsen the situation. We see that $ μ^0_q ∈ H_q $ is generally the best achievable.
\end{remark}

In the remainder of this section, we prove that any deformed $ A_∞ $-category $ \cat C_q $ has an uncurving with optimal curvature. The idea is to apply repeated uncurving by elements $ s $ which lie in increasingly high order of $ \mathfrak{m} $. We take our clue from inspecting the curvature $ μ_{\cat C_q'}^0 = μ^0_{\cat C_q} + μ^1_{\cat C_q} (s) + … $.
To get $ μ^0_{\cat C_q'} $ as close to zero as possible, write $ μ^0_{\cat C_q} = h + μ^1_{\cat C_q} (r) + r' $ in terms of the deformed decomposition of $ \cat C_q $ and choose $ s = -r $. The curvature of $ \cat C_q' $ then reads
\begin{equation*}
μ^0_{\cat C_q'} = h + μ^1_{\cat C_q} (r) + r' + μ^1_{\cat C_q} (-r) + μ^2_{\cat C_q} (-r, -r) + … = h + r' + \landau(\mathfrak{m}^2).
\end{equation*}
This is very productive strategy, since the new curvature has lost its $ μ^1_q (R) $ component in lowest order. Our idea is to repeat this procedure to eliminate also the higher order terms. A repeated approach is indeed necessary because newly arising curvature terms like $ μ^2_{\cat C_q} (r, r) $ may behave unpredictably.

\begin{definition}
Let $ \cat C $ be an $ A_∞ $-category with deformation $ \cat C_q $. Let $ H ⊕ I ⊕ R $ be a homological splitting for $ \cat C $. The \emph{curvature optimization procedure} is the following inductive procedure, starting with $ i = 0 $ and $ \cat C_q^{(0)} ≔ \cat C_q $.
\begin{enumerate}
\item Form the deformed decomposition $ H_q^{(i)} ⊕ μ^1_{\cat C_q^{(i)}} (B \htensor R) ⊕ B \htensor R $ of $ \cat C_q^{(i)} $.
\item Write the curvature as $ μ^0_{\cat C_q^{(i)}} = h^{(i)} + μ^1_{\cat C_q^{(i)}} (r^{(i)}) + r^{(i)}{}' $ in terms of the decomposition.
\item Define $ \cat C_q^{(i+1)} $ to be the uncurving of $ \cat C_q^{(i)} $ by $ -r^{(i)} $.
\item Repeat.
\end{enumerate}
\end{definition}

\begin{remark}
The definition of $ r^{(i)} $ in terms of $ \cat C_q{}^{(i)} $ can also be written elegantly as $ r^{(i)} = h_q{}^{(i)} (μ^0{}_{\cat C_q{}^{(i)}}) $, where $ h_q{}^{(i)} $ is the deformed codifferential of $ \cat C_q{}^{(i)} $. The letters $ h^{(i)} $, $ r^{(i)} $ and $ r^{(i)}{}' $ are actually families parametrized by objects $ X ∈ \cat C $. In the statement of \autoref{th:kadeishvili-optimizing-th}, we combine this shorthand with the shorthand notation $ \End_{\cat C} = \{\End_{\cat C} (X)\}_{X ∈ \cat C} $. For instance, $ r^{(i)} ∈ \mathfrak{m}^{2^i} \End_{\cat C} $ is to be understood as $ r^{(i)}_X ∈ \mathfrak{m}^{2^i} \End_{\cat C} (X) $ for every $ X ∈ \cat C $.
\end{remark}

After running the curvature optimization procedure, we expect the gauges $ r^{(i)} $ to combine together to one large gauge. We expect the categories $ \cat C_q{}^{(i)} $ to converge to a limit category $ \cat C_q^{\bigmax} $. We also expect the curvature of $ \cat C_q{}^{(i)} $ to converge to the curvature of $ \cat C_q^{\bigmax} $ and the deformed decompositions of $ \cat C_q^{(i)} $ to converge to the deformed decomposition of $ \cat C_q^{\bigmax} $:
\begin{align*}
\lim_{i → ∞} \cat C_q^{(i)}  &=  \cat C_q^{\bigmax}, \\
\lim_{i → ∞} μ^0_{\cat C_q^{(i)}} &=  μ^0_{\cat C_q^{\bigmax}}, \\
\lim_{i → ∞} \big(H_q^{(i)}, μ^1_{\cat C_q^{(i)}} (B \htensor R),  (B \htensor R)\big) &= \big(H_q^{\bigmax},  μ^1_{\cat C_q^{\bigmax}} (B \htensor R), (B \htensor R)\big).
\end{align*}
Ultimately, we hope to find $ μ^0{}_{\cat C_q^{\bigmax}} ∈ H_q^{\bigmax} ⊕ (B \htensor R) $. The next lemma makes this precise, and also shows that we have in fact reached $ μ^0{}_{\cat C_q^{\bigmax}} ∈ H_q^{\bigmax} $ as desired.

\begin{lemma}
\label{th:kadeishvili-optimizing-th}
Let $ \cat C $ be an $ A_∞ $-category and $ \cat C_q $ a deformation. Let $ H ⊕ I ⊕ R $ be a homological splitting for $ \cat C $. Let $ r^{(i)} $ be the sequence obtained from applying the curvature optimization procedure to $ \cat C_q $. Then it holds that $ r^{(i)} ∈ \mathfrak{m}^{2^i} \End_{\cat C} $. Set $ r = \sum_{i ∈ ℕ} r^{(i)} ∈ \mathfrak{m} \End_{\cat C} $ and define $ \cat C_q^{\bigmax} $ as the uncurving of $ \cat C_q $ by $ -r $. Then $ \cat C_q^{\bigmax} $ has optimal curvature and comes with an gauge equivalence
\begin{equation*}
F: \cat C_q^{\bigmax} → \cat C_q, \quad \text{given by} \quad F^0 = -r, \quad F^1 = \Id, \quad F^{≥2} = 0.
\end{equation*}
\end{lemma}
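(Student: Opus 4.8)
The plan is to run the argument in three stages. First I would establish the order estimate $r^{(i)}\in\mathfrak{m}^{2^i}\End_{\cat C}$ by induction on $i$. For the base case, $\mu^0_{\cat C_q}$ lies in $\mathfrak{m}\Hom_{\cat C}$ by the definition of a deformation, so its $\mu^1_q(B\htensor R)$-component does too and $r^{(0)}\in\mathfrak{m}=\mathfrak{m}^{2^0}$. For the inductive step, write $c^{(i)}:=\mu^0_{\cat C_q^{(i)}}$ and decompose $c^{(i)}=h^{(i)}+\mu^1_{\cat C_q^{(i)}}(r^{(i)})+r^{(i)}{}'$ in the deformed decomposition of $\cat C_q^{(i)}$. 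Assuming $r^{(i)}\in\mathfrak{m}^{2^i}$, the curvature of the uncurving $\cat C_q^{(i+1)}$ of $\cat C_q^{(i)}$ by $-r^{(i)}$ is
\begin{align*}
c^{(i+1)} &= c^{(i)}+\mu^1_{\cat C_q^{(i)}}(-r^{(i)})+\sum_{j\geq 2}\mu^j_{\cat C_q^{(i)}}(-r^{(i)},\ldots,-r^{(i)}) \\
&= h^{(i)}+r^{(i)}{}'+\sum_{j\geq 2}\mu^j_{\cat C_q^{(i)}}(-r^{(i)},\ldots,-r^{(i)}),
\end{align*}
the $\mu^1_q(B\htensor R)$-term cancelling against $\mu^1_{\cat C_q^{(i)}}(-r^{(i)})$. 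Each remaining summand carries at least two factors $r^{(i)}\in\mathfrak{m}^{2^i}$, hence lies in $\mathfrak{m}^{2^{i+1}}$. The term $r^{(i)}{}'$ lies in $B\htensor R$, and $h^{(i)}\in H_q^{(i)}$ lies in $H_q^{(i+1)}\oplus(B\htensor R)$: writing $h^{(i)}=h-E^{(i)}h$ with $h\in B\htensor H$ and using that $E^{(i+1)}$ also maps $B\htensor H$ into $B\htensor R$, one has $h^{(i)}=(h-E^{(i+1)}h)+(E^{(i+1)}h-E^{(i)}h)$. So neither $h^{(i)}$ nor $r^{(i)}{}'$ contributes to the $\mu^1_{\cat C_q^{(i+1)}}(B\htensor R)$-component of $c^{(i+1)}$, which therefore lies in $\mathfrak{m}^{2^{i+1}}$; applying the $\mathfrak{m}$-order-preserving isomorphism $h_q^{(i+1)}$ gives $r^{(i+1)}\in\mathfrak{m}^{2^{i+1}}$.

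Second, I would pass to the limit. The series $r=\sum_i r^{(i)}$ converges $\mathfrak{m}$-adically, so $r\in\mathfrak{m}\End_{\cat C}$. Iterated uncurving is additive: uncurving by $s_1$ and then by $s_2$ amounts to embracing $\mu_{\cat C_q}$ with an arbitrary mixture of $s_1$'s and $s_2$'s, which by multilinearity is the uncurving by $s_1+s_2$; hence by induction $\cat C_q^{(i)}$ is the uncurving of $\cat C_q$ by $-(r^{(0)}+\cdots+r^{(i-1)})$. As $i\to\infty$ the gauge parameter tends to $-r$, and since the products, curvature and the operator $E$ (hence the full deformed decomposition and its projections) depend $\mathfrak{m}$-adically continuously on it, the $\cat C_q^{(i)}$ converge to the uncurving of $\cat C_q$ by $-r$, which is precisely $\cat C_q^{\bigmax}$. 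The stated gauge equivalence $F$ with $F^0=-r$, $F^1=\Id$, $F^{\geq 2}=0$ is the standard gauge equivalence attached to an uncurving, as recalled after \autoref{def:kadeishvili-optimizing-uncurving}.

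Third, I would prove optimal curvature, i.e.\ $\mu^0_{\cat C_q^{\bigmax}}\in H_q^{\bigmax}$. Since $\cat C_q^{\bigmax}$ differs from $\cat C_q^{(i)}$ by uncurving by the tail $-\sum_{j\geq i}r^{(j)}\in\mathfrak{m}^{2^i}$, we get $c^{(i)}-\mu^0_{\cat C_q^{\bigmax}}\in\mathfrak{m}^{2^i}$ and the two $\mu^1_q(B\htensor R)$-projections agree up to $\mathfrak{m}^{2^i}$. As the $\mu^1_q(B\htensor R)$-component of $c^{(i)}$ is $\mu^1_{\cat C_q^{(i)}}(r^{(i)})\in\mathfrak{m}^{2^i}$, the $\mu^1_{\cat C_q^{\bigmax}}(B\htensor R)$-component of $\mu^0_{\cat C_q^{\bigmax}}$ lies in $\mathfrak{m}^{2^i}$ for all $i$, hence vanishes. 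Write then $\mu^0_{\cat C_q^{\bigmax}}=h+r'$ with $h\in H_q^{\bigmax}$ and $r'\in B\htensor R$. The first curved $A_\infty$-relation gives $0=\mu^1_{\cat C_q^{\bigmax}}(h)+\mu^1_{\cat C_q^{\bigmax}}(r')$; by \autoref{th:kadeishvili-deformed-splitting} the first summand has no $\mu^1_q(B\htensor R)$-component while the second lies entirely in $\mu^1_{\cat C_q^{\bigmax}}(B\htensor R)$, so $\mu^1_{\cat C_q^{\bigmax}}(r')=0$, and injectivity of $\mu^1_{\cat C_q^{\bigmax}}$ on $B\htensor R$ (\autoref{th:kadeishvili-deformed-badsplitting}) forces $r'=0$. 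Hence $\mu^0_{\cat C_q^{\bigmax}}=h\in H_q^{\bigmax}$.

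The main obstacle is the estimate of Stage~1, and within it the observation that $h^{(i)}$ and $r^{(i)}{}'$, although defined through the decomposition of $\cat C_q^{(i)}$, carry no $\mu^1_q(B\htensor R)$-component in the decomposition of $\cat C_q^{(i+1)}$ because $H_q^{(i)}\subseteq H_q^{(i+1)}\oplus(B\htensor R)$: this is exactly what pushes the uncontrolled part of the curvature into an ideal whose exponent doubles at each step, so that the process converges geometrically rather than with the merely linear gain one sees naively. The secondary technical point is the continuity bookkeeping in Stages~2 and~3, i.e.\ justifying that $\cat C_q^{\bigmax}$, its curvature and its deformed decomposition really are the $\mathfrak{m}$-adic limits of those of the $\cat C_q^{(i)}$.
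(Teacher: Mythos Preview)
Your proposal is correct and follows essentially the same three-part structure as the paper's proof: the inductive order estimate, the passage to the limit, and the elimination of the $B\htensor R$-component via the relation $\mu^1_q(\mu^0_q)=0$. The key observation in Stage~1—that $H_q^{(i)}\oplus(B\htensor R)=(B\htensor H)\oplus(B\htensor R)=H_q^{(i+1)}\oplus(B\htensor R)$, so $h^{(i)}$ and $r^{(i)}{}'$ contribute nothing under $h_q^{(i+1)}$—is exactly the paper's, and you spell it out more explicitly than the paper does. Your Stage~3 argument that the $\mu^1_q(B\htensor R)$-component vanishes is a minor variation: you compare projections across the $i$-th and limiting decompositions, whereas the paper simply observes that $h^{(i)}+r^{(i)}{}'$ lives in the fixed closed subspace $(B\htensor H)\oplus(B\htensor R)$ for all $i$ and takes the limit there—but both arrive at the same conclusion by the same mechanism.
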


\begin{proof}
We divide the proof into three parts. In the first part of the proof we show $ r^{(i)} ∈ \landau(\mathfrak{m}^{2^i}) $. Denote by $ H_q^{\bigmax} ⊕ μ^1_{\cat C_q^{\bigmax}} (B \htensor R) ⊕ (B \htensor R) $ the deformed decomposition of $ \cat C_q^{\bigmax} $. In the second part of the proof, we show that the curvature $ μ^0_{\cat C_q^{\bigmax}} $ lies in $ H_q^{\bigmax} ⊕ B \htensor R $. In the third part of the proof we conclude that the curvature actually lies in $ H_q^{\bigmax} $.

For the first part, let us show $ r^{(i)} ∈ \mathfrak{m}^{2^i} \End_{\cat C} $ by induction. For $ i = 0 $, the statement holds. Assume it holds for some $ i ∈ ℕ $. Recall that $ \cat C_q^{(i+1)} $ is the uncurving of $ \cat C_q^{(i)} $ by $ -r^{(i)} $. Its curvature is
\begin{align*}
μ^0_{\cat C_q^{(i+1)}} &= μ^0_{\cat C_q^{(i)}} + μ^1_{\cat C_q^{(i)}} (-r^{(i)}) + μ^2_{\cat C_q^{(i)}} (-r^{(i)}, -r^{(i)}) + … \\
&= h^{(i)} + μ^1_{\cat C_q^{(i)}} (r^{(i)}) + r^{(i)}{}' - μ^1_{\cat C_q^{(i)}} (r^{(i)}) + \landau(\mathfrak{m}^{2^{i+1}}) \\
&= h^{(i)} + r^{(i)}{}' + \landau(\mathfrak{m}^{2^{i+1}}).
\end{align*}
To make statements on $ r^{(i+1)} $, write $ h_q^{(i+1)} $ for the deformed codifferential of $ \cat C_q^{(i+1)} $. Then
\begin{align*}
r^{(i+1)} &= h_q^{(i+1)} (μ^0_{\cat C_q^{(i+1)}}) \\
&= h_q^{(i+1)} \big(h^{(i)} + r^{(i)}{}' + \landau(\mathfrak{m}^{2^{i+1}})\big) \\
&= 0 + 0 + \landau(\mathfrak{m}^{2^{i+1}}).
\end{align*}
In the last row, we have used that
\begin{equation*}
h^{(i)}, r^{(i)}{}' ∈ H_q^{(i)} ⊕ (B \htensor R) = (B \htensor H) ⊕ (B \htensor R) = H_q^{(i+1)} ⊕ (B \htensor R).
\end{equation*}
We have also used that $ h_q^{(i+1)}: B \htensor \Hom_{\cat C} → B \htensor R $ preserves the $ \mathfrak{m} $-adic filtration. The reason is that $ h_q^{(i+1)} $ is $ B $-linear and automatically continuous according to \autoref{th:prelim-htensor-autocontinuous}. In total, we arrive at $ r^{(i+1)} ∈ \mathfrak{m}^{2^{i+1}} R $ as claimed. This finishes the induction.

For the second part of the proof, we show that $ μ^0_{\cat C_q^{\bigmax}} $ lies in $ H_q^{\bigmax} ⊕ (B \htensor R) $. For every $ i ∈ ℕ $, regard
\begin{equation*}
μ^0_{\cat C_q^{(i)}} = h^{(i)} + μ^1_{\cat C_q^{(i)}} (r^{(i)}) + r^{(i)}{}'.
\end{equation*}
The left-hand side converges to $ μ^0_{\cat C_q^{\bigmax}} $. The third term on the right-hand side converges to zero. Together this means
\begin{equation*}
(B \htensor H) ⊕ (B \htensor R) \ni h^{(i)} + μ^1_{\cat C_q^{(i)}} (r^{(i)}) \xrightarrow{i → ∞} μ^0_{\cat C_q^{\bigmax}}.
\end{equation*}
We conclude
\begin{equation*}
μ^0_{\cat C_q^{\bigmax}} ∈ (B \htensor H) ⊕ (B \htensor R) = H_q^{\bigmax} ⊕ B \htensor R.
\end{equation*}
This finishes the second part of the proof.

For the third part of the proof, we show that $ μ^0{}_{\cat C_q^{\bigmax}} ∈ H_q^{\bigmax} $. The idea is to show that the $ R $ component of $ μ^0{}_{\cat C_q^{\bigmax}} $ vanishes. In fact, this is an easy a posteriori observation: Write this curvature as $ h + r $ with $ h ∈ H_q^{\bigmax} $ and $ r ∈ B \htensor R $. Then
\begin{equation*}
0 = μ^1_{\cat C_q^{\bigmax}} (μ^0_{\cat C_q^{\bigmax}}) = μ^1_{\cat C_q^{\bigmax}} (h) + μ^1_{\cat C_q^{\bigmax}} (r).
\end{equation*}
On the right hand side, the first summand lies in $ H_q^{\bigmax} ⊕ B \htensor R $ and the second summand lies in $ μ^1{}_{\cat C_q^{\bigmax}} (B \htensor R) $. Correspondingly, both summands vanish. While for $ h $ this is a weak statement, we immediately derive $ r = 0 $ since $ μ^1{}_{\cat C_q^{\bigmax}} $ is injective on $ B \htensor R $. This shows $ μ^0_{\cat C_q^{\bigmax}} ∈ H_q^{\bigmax} $ and finishes the proof.
\end{proof}

\subsection{Auxiliary minimal model procedure}
\label{sec:kadeishvili-auxiliary}
In this section, we construct auxiliary minimal models for deformed $ A_∞ $-categories with optimal curvature. The idea is to perform a construction with trees as in the classical case. For a given catgeory $ \cat C_q $ with optimal curvature, the first step in this section is to provide an explicit description of the auxiliary $ A_∞ $-structure on $ H_q $ and a functor $ F_q: H_q → \cat C_q $. We then check that the auxiliary minimal model satisfies the curved $ A_∞ $-axioms and that $ F_q $ satisfies the curved $ A_∞ $-functor axioms.

\begin{center}
\begin{tikzpicture}
\path (0, 0) node (A) {Deformation $ \cat C_q $ with optimal curvature} (8, 0) node[align=center] (B) {Auxiliary deformation $ H_q $ \\ Auxiliary functor $ F_q: H_q → \cat C_q $};
\path[draw, decorate, decoration={snake, amplitude=0.2em, post length=0.5em}, ->] ($ (A.east)!0.2!(B.west) $) to ($ (A.east)!0.8!(B.west) $);
\end{tikzpicture}
\end{center}

\begin{remark}
The material in this section is considered auxiliary because we only construct an $ A_∞ $-structure on $ H_q $ and not on $ B \htensor H $. The $ A_∞ $-structure on $ B \htensor H $ is obtained in \autoref{sec:kadeishvili-general} simply by transfer via $ φ: H_q → B \htensor H $.
\end{remark}

To define the auxiliary $ A_∞ $-structures, we have to set up some context. Let $ \cat C $ be an $ A_∞ $-category and $ H \oplus I \oplus R $ a homological splitting. Let $ \cat C_q $ be a deformation with optimal curvature. Denote by $ H_q \oplus μ^1_{\cat C_q} (B \htensor R) \oplus (B \htensor R) $ the deformed decomposition of $ \cat C_q $. We use the following notation:

\begin{definition}
Consider a sequence $ h_1, …, h_n $ of $ n \geq 2 $ morphisms with $ h_i ∈ H_q (X_i, X_{i+1}) $. Let $ T ∈ \mathcal{T}_n $ be a Kadeishvili tree shape with $ n $ leaves, as in \autoref{sec:kadeishvili-classical}. Define
\begin{equation*}
\Res_q (T, h_1, …, h_n) ∈ H_q (X_1, X_{n+1})
\end{equation*}
to be the evaluation of $ T $ by decorating the leaves with the inputs $ h_1, …, h_n $, the internal nodes by $ h_q μ_q $ and the root by $ π_q μ_q $. Define
\begin{equation*}
\Res_q^h (T, h_1, …, h_n) ∈ B \htensor R(X_1, X_{n+1})
\end{equation*}
to be the evaluation of $ T $ by decorating the leaves with the inputs $ h_1, …, h_n $ and all other nodes by $ h_q μ_q $, including the root.
\end{definition}

\begin{example}
A few sample decorated trees for the definition of $ \Res_q (T, h_1, …, h_n) $ are depicted in \autoref{fig:kadeishvili-auxiliary-trees}. For instance, the first three trees give results $ π_q μ_q^2 (h_2, h_1) $, $ π_q μ_q^3 (h_3, h_2, h_1) $, $ π_q μ_q^2 (h_q μ_q^2 (h_3, h_2), h_1) $.
\end{example}

We temporarily by $ i $ the inclusion map of $ H_q (X_1, X_2) $ into $ \Hom_{\cat C_q} (X_1, X_2) $. With these preparations, we are ready to define auxiliary product structure on the collection of spaces $ H_q = \{H_q (X, Y)\}_{X, Y ∈ \cat C_q} $ and an auxiliary mapping $ F_q: H_q → \cat C_q $:

\begin{definition}
The \emph{auxiliary product structure} on $ H_q $ is defined as follows:
\begin{align*}
μ^0_{H_q} &= μ^0_{\cat C_q}, \\
μ^1_{H_q} &= π_q μ^1_q \restr{H_q}, \\
μ_{H_q}^{n \geq 2} (h_n, …, h_1) &= \sum_{T ∈ \mathcal{T}_n} (-1)^{N_T} \Res_q (T, h_1, …, h_n).
\end{align*}
The candidate functor $ F_q: H_q → \cat C_q $ is defined by
\begin{align*}
F_q^0 &= 0, \\
F_q^1 &= i, \\
F_q^{n \geq 2} (h_n, …, h_1) &= \sum_{T ∈ \mathcal{T}_n} (-1)^{N_T + 1} \Res_q^h (T, h_1, …, h_n).
\end{align*}
\end{definition}

In words, $ μ^0_{H_q} $ is defined as $ μ^0_{\cat C_q} $ which already lies in $ H_q $ since $ \cat C_q $ has optimal curvature. The differential $ μ^1_{H_q} $ is defined by projecting $ μ^1_{\cat C_q} $ down to $ H_q $. All higher products are given by trees. The functor component $ F_q^0 $ is set to zero, the component $ F^1_q $ is the natural embedding of $ H_q (X_1, X_2) $ into the hom space $ \Hom_{\cat C_q} (X_1, X_2) $ and the higher components of $ F_q $ are given by trees.

\begin{figure}
\centering
\begin{tikzpicture}
\path node (A) {$ h_2 $} node[right of=A] (B) {$ h_1 $}
node[below right of=A] {$ π_q μ_q $} edge (A) edge (B);
\path node[right of=B] (C) {$ h_3 $} node[right of=C] (D) {$ h_2 $} node[right of=D] (E) {$ h_1 $}
node[below of=D] {$ π_q μ_q $} edge (C) edge (D) edge (E);
\path node[right of=E] (F) {$ h_3 $} node[right of=F] (G) {$ h_2 $} node[right of=G] (H) {$ h_1 $}
node[below right of=F] (I) {$ h_q μ_q $} edge (F) edge (G)
node[below right of=I] {$ π_q μ_q $} edge (I) edge (H);
\path node[right of=H] (F1) {$ h_3 $} node[right of=F1] (G1) {$ h_2 $} node[right of=G1] (H1) {$ h_1 $}
node[below right of=G1] (I1) {$ h_q μ_q $} edge (G1) edge (H1)
node[below left of=I1] {$ π_q μ_q $} edge (I1) edge (F1);
\path node[right of=H1] (J) {$ h_4 $} node[right of=J] (K) {$ h_3 $} node[right of=K] (L) {$ h_2 $} node[right of=L] (M) {$ h_1 $}
node[below of=K] (N) {$ h_q μ_q $} edge (J) edge (K) edge (L)
node[below right of=N] {$ π_q μ_q $} edge (N) edge (M);
\end{tikzpicture}
\caption{Decorating Kadeishvili π-trees for $ μ_{H_q} $}
\label{fig:kadeishvili-auxiliary-trees}
\end{figure}
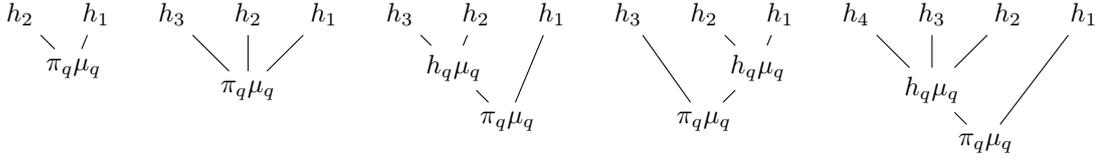

Checking the functor relations for $ F_q $ entails switching around projections $ π_q $ and codifferentials $ h_q $. We need to prepare for this with a simple lemma:

\begin{lemma}
\label{th:kadeishvili-auxiliary-projections}
Let $ \cat C_q $ be a deformed $ A_∞ $-category with optimal curvature. Then the projections to $ B \htensor R $ and $ μ^1_{\cat C_q} (B \htensor R) $ with respect to the deformed decomposition can be written as
\begin{align*}
π_{μ^1_q (B \htensor R)} &= μ^1_q h_q, \\
π_{B \htensor R} &= h_q μ^1_q - h_q μ^1_q μ^1_q h_q.
\end{align*}
\end{lemma}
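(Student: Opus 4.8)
The plan is to verify both identities by evaluating them on a general element $ x ∈ B \htensor \Hom_{\cat C} $ written according to the deformed decomposition \eqref{eq:kadeishvili-deformed-splitting}, say $ x = h + μ^1_q (r) + r' $ with $ h ∈ H_q $ and $ r, r' ∈ B \htensor R $. Throughout I read the deformed codifferential $ h_q $ of \autoref{def:kadeishvili-deformed-counterpart} as a map defined on all of $ B \htensor \Hom_{\cat C} $, extended by zero on $ H_q $ and on $ B \htensor R $ — exactly as for the classical codifferential of \autoref{sec:kadeishvili-splitting} — so that $ h_q (x) = r $, and $ h_q (μ^1_q (s)) = s $ for every $ s ∈ B \htensor R $ because $ h_q $ inverts $ μ^1_q \restr{B \htensor R} $. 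The first identity is then immediate: $ μ^1_q h_q (x) = μ^1_q (r) $, which is precisely the middle component $ π_{μ^1_q (B \htensor R)} (x) $.

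For the second identity I would compute $ h_q μ^1_q (x) = h_q\big(μ^1_q (h) + μ^1_q μ^1_q (r) + μ^1_q (r')\big) $. By \autoref{th:kadeishvili-deformed-splitting} we have $ μ^1_q (H_q) ⊂ H_q ⊕ (B \htensor R) $, so $ h_q $ annihilates $ μ^1_q (h) $; the term $ μ^1_q (r') ∈ μ^1_q (B \htensor R) $ contributes $ r' $; and the only term which need not sit inside a single summand of the decomposition is $ μ^1_q μ^1_q (r) $. Writing its decomposition abstractly as $ a + μ^1_q (b) + c $ with $ a ∈ H_q $ and $ b, c ∈ B \htensor R $, I get $ h_q μ^1_q (x) = b + r' $. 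It then remains to recognise the correction term, which follows from $ h_q μ^1_q μ^1_q h_q (x) = h_q μ^1_q μ^1_q (r) = h_q(a + μ^1_q (b) + c) = b $; subtracting, $ (h_q μ^1_q - h_q μ^1_q μ^1_q h_q)(x) = (b + r') - b = r' = π_{B \htensor R} (x) $, as claimed.

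The main point to watch — and the reason the formula differs from the classical $ π_R = h μ^1 $ — is that here $ μ^1_q μ^1_q $ does not vanish: the curved $ A_∞ $-relation only gives $ μ^1_q μ^1_q (a) = -(-1)^{‖a‖} μ^2_q (μ^0_q, a) - μ^2_q (a, μ^0_q) $, an infinitesimal but in general nonzero correction. Accordingly the argument must keep $ μ^1_q μ^1_q (r) $ symbolic rather than discard it, and cancel it against the second summand of the claimed expression; as a consistency check, setting the deformation to zero kills that summand and recovers the familiar classical identities. I would also remark that optimal curvature is not actually invoked in this lemma — one needs only the existence of the deformed decomposition and the containment $ μ^1_q (H_q) ⊂ H_q ⊕ (B \htensor R) $ supplied by \autoref{th:kadeishvili-deformed-splitting} — but the hypothesis is harmless and matches the ambient setting of this subsection, where $ μ^0_{\cat C_q} ∈ H_q $ is needed for the subsequent tree construction.
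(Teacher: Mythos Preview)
Your proof is correct and follows essentially the same approach as the paper: both verify the two identities directly on the deformed decomposition $H_q \oplus \mu^1_q(B \htensor R) \oplus (B \htensor R)$, using the containment $\mu^1_q(H_q) \subset H_q \oplus (B \htensor R)$ from \autoref{th:kadeishvili-deformed-splitting} and the fact that $h_q$ inverts $\mu^1_q\restr{B \htensor R}$. The only cosmetic difference is that the paper checks each of the three summands separately, whereas you evaluate on a general element $x = h + \mu^1_q(r) + r'$ in one pass; your treatment of the cross-term $\mu^1_q \mu^1_q(r)$ (keeping it symbolic and cancelling it against the correction) is equivalent to the paper's computation on the summand $\mu^1_q(B \htensor R)$. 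Your closing remark that optimal curvature is not actually invoked here is also correct.
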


\begin{proof}
Every hom space in $ \cat C_q $ is the direct sum of the three components $ H_q $, $ μ^1_q (B \htensor R) $ and $ B \htensor R $. Therefore it suffices to check the identities on these three spaces individually.

On $ H_q $, both sides of the first formula evaluate to zero by definition. In the second formula, the right hand side evalualates to zero as well, because $ μ^1_q (H_q) ⊂ H_q ⊕ μ^1_q (B \htensor R) $.

On $ B \htensor R $, both sides of the first formula evaluate to zero by definition. In the second formula, the first term evaluates indeed to the identity and the second term vanishes.

On $ μ^1_q (B \htensor R) $, both sides of the first formula evaluate to the identity. To check the second formula, regard an arbitrary element $ μ^1_q (r) $ with $ r ∈ B \htensor R $. Then
\begin{equation*}
h_q μ^1_q (μ^1_q (r)) - h_q μ^1_q μ^1_q h_q (μ^1_q (r)) = h_q μ^1_q μ^1_q (r) - h_q μ^1_q μ^1_q (r) = 0.
\end{equation*}
We conclude that the claimed identities hold on all three direct summand spaces, finishing the proof.
\end{proof}

In \autoref{th:kadeishvili-auxiliary-Hq}, we prove the desired $ A_∞ $-relations for $ H_q $ and $ F_q: H_q → \cat C_q $. Strictly speaking, $ H_q $ itself is not a deformation of any $ A_∞ $-category. However, it makes perfect sense to check the curved $ A_∞ $-relations for the structure $ μ_{H_q} $ defined on $ H_q $. The product structure $ μ_{H_q} $ is merely an auxiliary tool and will disappear again in \autoref{sec:kadeishvili-general}.

\begin{lemma}
\label{th:kadeishvili-auxiliary-Hq}
Let $ \cat C $ be an $ A_∞ $-category and $ \cat C_q $ a deformation. Assume $ \cat C_q $ has optimal curvature. Then $ μ_{H_q} $ satisfies the curved $ A_∞ $-relations and $ F_q: H_q → \cat C_q $ satisfies the curved $ A_∞ $-functor relations.
\end{lemma}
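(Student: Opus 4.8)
The statement is the deformed counterpart of the classical Kadeishvili theorem recalled in \autoref{sec:kadeishvili-classical}, and I would prove it by running the same tree--telescoping argument, tracking the three new features of the deformed setting: the infinitesimal curvature $\mu^0_q$, the failure of $\mu^1_q$ to square to zero, and the failure of $\mu^1_q$ to preserve $H_q$. An efficient way to organise the two assertions is to pass to the (curved) bar construction: writing $b_q$ for the codifferential attached to $\mu_{\cat C_q}$, the curved $A_\infty$-relations of $\cat C_q$ read $b_q^2 = 0$, the claim ``$\mu_{H_q}$ satisfies the curved $A_\infty$-relations'' reads $b_{H_q}^2 = 0$, and the claim ``$F_q$ is a functor'' reads $b_q\hat F_q = \hat F_q b_{H_q}$ for the coalgebra morphism $\hat F_q$ extending $F_q$. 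Since $F_q^1 = i$ is injective, $\hat F_q$ is injective (filter by tensor degree), so $b_q^2 = 0$ together with the functor relation forces $\hat F_q b_{H_q}^2 = b_q^2\hat F_q = 0$ and hence $b_{H_q}^2 = 0$. Thus it suffices to prove the curved $A_\infty$-functor relations for $F_q$; the $A_\infty$-relations for $\mu_{H_q}$ then follow (alternatively one reruns the identical tree computation with $\pi_q$ at the root in place of $h_q$).

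\textbf{Side relations.} I would first record the perturbed contraction data implied by \autoref{def:kadeishvili-deformed-counterpart} and \autoref{th:kadeishvili-auxiliary-projections}: $\pi_q i = \id_{H_q}$, $h_q i = 0$, $\pi_q h_q = 0$, $h_q h_q = 0$, and $h_q \mu^1_q i = 0$ (the last because $\mu^1_q(H_q) \subset H_q \oplus (B\htensor R)$ by \autoref{th:kadeishvili-deformed-splitting}, so the $\mu^1_q(B\htensor R)$-component that $h_q$ reads off vanishes). Decomposing $\id$ into the three projections of the deformed decomposition and substituting the formulas of \autoref{th:kadeishvili-auxiliary-projections} yields the key identity
\begin{equation*}
\id = i\pi_q + \mu^1_q h_q + h_q\mu^1_q - h_q\mu^1_q\mu^1_q h_q,
\end{equation*}
which degenerates to the usual deformation-retract relation exactly when $\mu^1_q\mu^1_q = 0$. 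The optimal-curvature hypothesis enters only as $\mu^0_q \in H_q$, i.e. $h_q\mu^0_q = 0$ and $\pi_q\mu^0_q = \mu^0_q$; this is what makes $\mu^0_{H_q} \coloneqq \mu^0_q$ a legitimate curvature for the structure on $H_q$ and keeps the curvature from re-entering the trees through the $h_q$-decorated internal nodes.

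\textbf{Main computation.} I would then expand $\hat F_q b_{H_q} - b_q\hat F_q$ on a tensor $h_1\otimes\cdots\otimes h_n$ of elements of $H_q$, writing every $\mu^{\ge 2}_{H_q}$ as $\sum_T (-1)^{N_T}\Res_q(T,\dots)$ and every $F_q^{\ge 2}$ as $\sum_T(-1)^{N_T+1}\Res_q^h(T,\dots)$. The result is a signed sum over grafted Kadeishvili trees with one distinguished edge, at which one inserts $\id$ and decomposes it via the displayed identity. The $i\pi_q$-term regrafts to the next stratum of the recursion and is absorbed; the $\mu^1_q h_q$- and $h_q\mu^1_q$-terms, together with the relations $\mu_{\cat C_q}\cdot\mu_{\cat C_q} = 0$ of $\cat C_q$, telescope to zero exactly as in the classical case; the correction $-h_q\mu^1_q\mu^1_q h_q$ pairs off against the contribution of the double occurrences of $\mu^1_q$ forced by $\mu^1_q\mu^1_q \ne 0$ (in particular against the $\mu^1_{H_q}\mu^1_{H_q}$ term on the $\mu_{H_q}$-side); and the curvature strata --- trees carrying a $0$-ary $\mu^0_q$-node --- cancel against the $\mu^0_{H_q}$-insertions by $h_q\mu^0_q = 0$. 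Structurally this is the classical Kadeishvili argument with $\mu,h,\pi$ replaced by $\mu_q,h_q,\pi_q$, augmented by these two extra families of terms, and the same display simultaneously delivers $b_{H_q}^2 = 0$ as above.

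\textbf{Main obstacle.} The conceptual content --- that everything collapses onto $b_q^2 = 0$ together with the displayed contraction identity --- is as in the classical theorem; the genuine work is the sign and combinatorial bookkeeping, checking that the weight $(-1)^{N_T}$, resp.\ $(-1)^{N_T+1}$ on the functor side, makes every term outside the regrafting cancel in pairs. This is now complicated by $0$-ary curvature leaves, which shift $N_T$ and hence the parity entering the telescoping, and by the quadratic correction $h_q\mu^1_q\mu^1_q h_q$, which has no classical analogue. It is precisely in order to pin these signs down --- including in the classical case --- that this lemma is stated, cf.\ the sentence preceding the classical Kadeishvili theorem asserting that the signs $(-1)^{N_T}$ are verified here.
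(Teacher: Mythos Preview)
Your overall strategy matches the paper's exactly: prove the curved $A_\infty$-functor relations for $F_q$ first, then deduce the curved $A_\infty$-relations for $\mu_{H_q}$ from injectivity of $F_q^1$. You also correctly identify the key ingredient, namely the identity $\id = i\pi_q + \mu^1_q h_q + h_q\mu^1_q - h_q\mu^1_q\mu^1_q h_q$, which is precisely the content of \autoref{th:kadeishvili-auxiliary-projections} rewritten as a decomposition of the identity, and you correctly isolate the role of optimal curvature as $h_q\mu^0_q = 0$.

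The organisation of the main computation differs. You propose the classical tree-telescoping argument: insert the displayed four-term identity at a distinguished edge and cancel. The paper instead avoids any global tree combinatorics: for the functor relation with $k\ge 2$ inputs it projects both sides onto the three summands $H_q$, $\mu^1_q(B\htensor R)$, $B\htensor R$ separately. The $H_q$-component is immediate from the definition of $\mu_{H_q}$; the $\mu^1_q(B\htensor R)$-component follows from $\pi_{\mu^1_q(B\htensor R)} = \mu^1_q h_q$; the $B\htensor R$-component uses $\pi_{B\htensor R} = h_q\mu^1_q - h_q\mu^1_q\mu^1_q h_q$, unravels one layer of the tree definition of $F_q$, applies \emph{induction on the length of the input sequence} to invoke the functor relation already proven for shorter inputs, and then applies the curved $A_\infty$-relation of $\cat C_q$. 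The quadratic correction $h_q\mu^1_q\mu^1_q h_q$ thus enters not as an edge insertion to be telescoped but directly as part of the projection formula, which makes its cancellation transparent rather than combinatorial. This buys the paper a substantially shorter argument with no sign bookkeeping beyond what is already absorbed into the tree definitions; your approach would work too, but the phrase ``pairs off against the contribution of the double occurrences of $\mu^1_q$'' hides exactly the combinatorics the paper's projection-and-induct strategy sidesteps.
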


\begin{proof}
We prove the statements in reverse order: First we show that $ F_q $ satisfies the curved $ A_∞ $-functor relations. Second we conclude that $ μ_{H_q} $ satisfies the curved $ A_∞ $-relations.

For the first part, regard the curved $ A_∞ $-functor relations for $ F_q $:
\begin{equation}
\label{eq:kadeishvili-auxiliary-functor}
\sum (-1)^{‖a_1‖ + … + ‖a_j‖} F_q (a_k, …, μ_{H_q} (a_i, …, a_{j+1}), …, a_1) = \sum μ_{\cat C_q} (F_q (a_k, …), …, F_q (…, a_1)).
\end{equation}
We shall first prove these relations separately for $ k = 0 $ and $ k = 1 $ and then for general $ k ≥ 2 $. For $ k = 0 $, the relation reads $ F_q^1 (μ^0_{H_q}) = μ^0_{H_q} $ since $ F_q^0 = 0 $. This relation holds true by definition of $ F_q^1 $ and $ μ^0{}_{H_q} $. For $ k = 1 $, we calculate
\begin{align*}
F_q^1 (μ_{H_q}^1 (a)) + F_q^2 (a, μ_{H_q}^0) + (-1)^{‖a‖} F_q^2 (μ_{H_q}^0, a) &= π_q (μ_q (a)) - h_q μ_q^2 (a, μ_q^0) - (-1)^{‖a‖} h_q μ_q^2 (μ_q^0, a) \\
&= π_q (μ_q (a)) + h_q (μ_q (μ_q (a))) \\
&= μ_q (a) = μ_{\cat C_q} (F_q^1 (a)).
\end{align*}
In the second equality, we have used the curved $ A_∞ $-relations of $ \cat C_q $. In the third equality, we have used the property of the deformed decomposition that $ μ_q (H_q) ⊂ H_q ⊕ (B \htensor R) $. This settles the cases $ k = 0, 1 $.

Let us now prove \eqref{eq:kadeishvili-auxiliary-functor} in case $ k ≥ 2 $ by projecting both sides onto $ H_q $, $ μ^1_q (B \htensor R) $ and $ B \htensor R $ individually. First, regard the projection on $ H_q $. Since $ F_q^{≥2} $ has image in $ B \htensor R $, we have
\begin{equation*}
π_{H_q} (\text{LHS}) = F_q^1 (μ_{H_q} (a_k, …, a_1)) = μ_{H_q} (a_k, …, a_1) = \sum π_q μ_q (F_q (…), …, F_q (…)) = π_{H_q} (\text{RHS}).
\end{equation*}
We have used nothing but the definition of $ μ_{H_q} $ and $ F_q $. Now regard the projection on $ μ^1_{\cat C_q} (B \htensor R) $. We have
\begin{align*}
π_{μ^1_{\cat C_q} (B \htensor R)} (\text{RHS}) &= μ^1_q h_q μ_q^{≥2} (F_q (…), …, F_q (…)) + μ^1_q h_q μ^1_q F_q (…) \\
&= - μ^1_q F_q (…) + μ^1_q F_q (…) = 0 \\
& = π_{μ^1_{\cat C_q} (B \htensor R)} (\text{LHS}).
\end{align*}
Lastly, regard the projection to $ B \htensor R $. We have
\begin{align*}
π_{B \htensor R} (\text{LHS}) & = (-1)^{‖a_1‖ + … + ‖a_i‖} F_q^{≥2} (…, μ_{H_q}^{≥0} (…), a_i, …) \\
& = (-1)^{‖a_1‖ + … + ‖a_i‖ + 1} h_q μ_q^{≥2} (F_q, …, F_q (…, μ_{H_q}^{≥0}, a_i, …), F_q (a_j, …), …, F_q) \\
& = (-1)^{‖a_1‖ + … + ‖a_j‖ + 1} h_q μ_q^{≥2} (F_q, …, μ_q^{≥0} (F_q, …, F_q), F_q (a_j, …), …, F_q) \\
& = (-1)^{1+1} h_q μ^1_q μ_q^{≥2} (F_q, …, F_q) + (-1)^{1+1} h_q μ^1_q μ^1_q F_q (…) \\
& = + h_q μ^1_q μ_q^{≥2} (F_q, …, F_q) - h_q μ^1_q μ^1_q h_q μ_q^{≥2} (F_q, …, F_q) \\
& = π_{B \htensor R} (μ_q^{≥2} (F_q, …, F_q)) \\
& = π_{B \htensor R} (\text{RHS}).
\end{align*}
In the second equality, we have unraveled the definition of $ F_q $. In the third equality, we have assumed towards induction that \eqref{eq:kadeishvili-auxiliary-functor} already holds for a shorter sequence of inputs. In the fourth equality, we have used the (curved) $ A_∞ $-relation for $ \cat C_q $. In the fifth equality, we have unraveled the definition of $ F_q $ again. In the sixth equality, we have used the expression for the projection according to \autoref{th:kadeishvili-auxiliary-projections}. Finally, we conclude that \eqref{eq:kadeishvili-auxiliary-functor} holds on the entire hom spaces of $ \cat C_q $. In other words, $ F_q $ satisfies the (curved) $ A_∞ $-functor relations.

For the second part of the proof, we show that $ μ_{H_q} $ satisfies the curved $ A_∞ $-relations. The trick is to apply $ F_q^1 $ to the $ A_∞ $-relations for $ μ_{H_q} $ and pull terms from inside to outside using the just proven fact that $ F_q $ satisfies the (curved) $ A_∞ $-functor relations. We calculate
\begin{align*}
& F^1_q (μ_{H_q} (a_k, …, μ_{H_q}^{≥0} (…), …, a_1)) \\
&= F_q^{≥2} (a_k, …, μ_{H_q}^{≥1} (…, μ_{H_q}^{≥0} (…), …), …, a_1) + F_q^{≥2} (a_k, …, μ_{H_q}^{≥0} (…), …, μ_{H_q}^{≥0} (…), …) \\
& \quad + μ_q^{≥1} (F_q (…), …, F_q (…, μ_{H_q}^{≥0} (…), …), …, F_q (…)) \\
&= F_q^{≥2} (a_k, …, 0, …, a_1) + 0 + μ_q^{≥1} (F_q (…), …, μ_q^{≥1} (F_q (…), …, F_q (…)), …, F_q (…)) \\
&= 0 + 0 + 0.
\end{align*}
In the first equality, we have used that $ F_q $ satisfies the (curved) $ A_∞ $-functor relation on the sequence $ a_1, …, μ_{H_q}^{≥0} (…), …, a_k $. The two terms on the second row come from the $ A_∞ $-functor relation, and are distinguished by the choice whether the inner $ μ_{H_q}^{≥0} (…) $ is inserted into the new product $ μ_{H_q}^{≥0} $ or not. The terms of the type $ F_q (μ_{H_q}, μ_{H_q}) $ however appear pairwise and cancel each other. In the second equality, we have used the assumption that $ μ_{H_q} $ already satisfies the $ A_∞ $-relations on shorter sequences.

Finally, we note that $ F^1_q $ is injective on $ H_q $ and therefore $ μ_{H_q} $ satisfies the $ A_∞ $-relations on the sequence $ a_1, …, a_k $. In total, we conclude that $ μ_{H_q} $ satisfies the curved $ A_∞ $-relations.
\end{proof}

\subsection{The deformed Kadeishvili theorem}
\label{sec:kadeishvili-general}
In this section, we provide our most general Kadeishvili theorem for deformed $ A_∞ $-categories. The starting point is an arbitrary deformed $ A_∞ $-category $ \cat C_q $ and the goal is to find a minimal model for $ \cat C_q $ in the sense of \autoref{def:kadeishvili-existence-def}. The idea is to apply the curvature optimization procedure to $ \cat C_q $, then take the auxiliary minimal model in the sense of \autoref{sec:kadeishvili-auxiliary} and to pull back the structure in order to form a deformation of $ \H\cat C $.

\begin{center}
\begin{tikzpicture}
\path (0, 0) node (A) {Deformed $ A_∞ $-category $ \cat C_q $} (8, 0) node[align=center] (B) {Minimal model $ \H\cat C_q $};
\path[draw, decorate, decoration={snake, amplitude=0.2em, post length=0.5em}, ->] ($ (A.east)!0.2!(B.west) $) to ($ (A.east)!0.8!(B.west) $);
\end{tikzpicture}
\end{center}

\begin{definition}
\label{def:kadeishvili-general-def}
Let $ \cat C $ be an $ A_∞ $-category and $ \cat C_q $ a deformation. Let $ H \oplus I \oplus R $ be a homological splitting for $ \cat C $. Apply the curvature optimization procedure to $ \cat C_q $. Let $ \cat C_q^{\bigmax} $ be the result and $ r $ be the gauge used. Denote by $ H_q^{\bigmax} \oplus μ^1{}_{\cat C_q^{\bigmax}} (B \htensor R) \oplus (B \htensor R) $ the deformed decomposition of $ \cat C_q^{\bigmax} $ and by $ φ: H_q^{\bigmax} → B \htensor H $ the associated isomorphism. Apply the auxiliary minimal model procedure to $ \cat C_q^{\bigmax} $. Let $ μ_{H_q^{\bigmax}} $ be the resulting auxiliary $ A_∞ $-structure and $ F_q^{\bigmax}: H_q^{\bigmax} → \cat C_q^{\bigmax} $ be the auxiliary functor.

Then we define the $ A_∞ $-structure $ μ_{\H\cat C_q} $ on $ B \htensor H $ and the functor $ F_q: \H\cat C_q → \cat C_q $ by
\begin{equation}
\label{eq:kadeishvili-general-def}
\begin{aligned}
μ_{\H\cat C_q} &= φ ∘ μ_{H_q^{\bigmax}} ∘ φ^{-1}, \\
F_q &= (\Id - r) ∘ F_q^{\bigmax} ∘ φ^{-1}.
\end{aligned}
\end{equation}
\end{definition}

\begin{remark}
In \eqref{eq:kadeishvili-general-def}, the circle symbol denotes composition of curved $ A_∞ $-functors. By abuse of notation, we have denoted the gauge functor from the curvature optimization procedure by $ \Id - r $, standing for the functor with 0-ary component $ -r $ and 1-ary component $ \Id $ and vanishing higher components. Furthermore, we have interpreted $ φ $ as an $ A_∞ $-functor $ H_q^{\bigmax} → B \htensor H $ with only a 1-ary component. More explicitly, the definition for $ μ_{\H\cat C_q} $ reads
\begin{equation*}
μ_{\H\cat C_q}^{n \geq 0} (h_n, …, h_1) ≔ φ μ^n_{H_q^{\bigmax}} (φ^{-1} (h_n), …, φ^{-1} (h_1)).
\end{equation*}
\end{remark}

\begin{theorem}
\label{th:kadeishvili-general-th}
Let $ \cat C $ be an $ A_∞ $-category. Let $ H \oplus I \oplus R $ be a homological splitting for $ \cat C $ and let $ \H\cat C $ be the minimal model obtained from this splitting. Then $ \H\cat C_q $ is an $ A_∞ $-deformation of $ \H\cat C $ and $ F_q: \H\cat C_q → \cat C_q $ is a quasi-isomorphism of deformed $ A_∞ $-categories. In particular, $ \H\cat C_q $ is a minimal model for $ \cat C_q $.
\end{theorem}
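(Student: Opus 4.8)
The plan is to obtain the theorem essentially for free from \autoref{th:kadeishvili-optimizing-th} and \autoref{th:kadeishvili-auxiliary-Hq}, the remaining work being a leading-term analysis. First I would check that $\mu_{\H\cat C_q}$ satisfies the curved $A_\infty$-relations: by \autoref{th:kadeishvili-optimizing-th} the category $\cat C_q^{\bigmax}$ has optimal curvature, so \autoref{th:kadeishvili-auxiliary-Hq} gives that $\mu_{H_q^{\bigmax}}$ satisfies the curved $A_\infty$-relations, and transporting this structure along the $B$-linear isomorphism $\varphi$ — regarded as a strict $A_\infty$-functor with only a $1$-ary component — preserves the relations. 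Likewise, $F_q$ is the composition of the curved $A_\infty$-functor $(\Id-r)\colon \cat C_q^{\bigmax}\to\cat C_q$ from \autoref{th:kadeishvili-optimizing-th}, the functor $F_q^{\bigmax}\colon H_q^{\bigmax}\to\cat C_q^{\bigmax}$ from \autoref{th:kadeishvili-auxiliary-Hq} (which satisfies the curved $A_\infty$-functor relations), and $\varphi^{-1}$; hence it satisfies the curved $A_\infty$-functor relations, and its $0$-ary part works out to the infinitesimal element $-r$, so $F_q$ is a genuine functor of deformed $A_\infty$-categories.

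The heart of the matter is then to verify that $\H\cat C_q$ is a deformation of $\H\cat C$, i.e.\ that it is built on the hom spaces $B\htensor\Hom_{\H\cat C}$ with leading term $\mu_{\H\cat C}$. The hom spaces are $B\htensor H$, identified with $B\htensor\Hom_{\H\cat C}$ via the isomorphism $H\cong\H\Hom_{\cat C}$ of \autoref{sec:kadeishvili-classical}. For the products I would argue that at $q=0$ every ingredient of the construction degenerates to its classical counterpart: the gauge $r$ lies in $\mathfrak m$, so the curvature-optimization gauge has leading term $\Id$ and $\cat C_q^{\bigmax}$ has leading term $\cat C$; the operator $E$ of \autoref{th:kadeishvili-deformed-badsplitting} has vanishing leading term (pinned down by $\mu^1_{\cat C}|_H=0$), so $H_q^{\bigmax}$ has leading term $H$ and $\varphi$ has leading term $\Id_H$; and the deformed decomposition, the deformed codifferential $h_q$ and the deformed projection $\pi_q$ of $\cat C_q^{\bigmax}$ reduce respectively to the splitting $H\oplus I\oplus R$, to its codifferential $h$ and to its projection $\pi$. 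Therefore $\Res_q(T,-)$ reduces to $\Res(T,-)$, the tree sum for $\mu^{\ge 2}_{H_q^{\bigmax}}$ reduces to the classical Kadeishvili tree sum, and $\mu^0_{H_q^{\bigmax}}=\mu^0_{\cat C_q^{\bigmax}}$ and $\mu^1_{H_q^{\bigmax}}=\pi_q\mu^1_q|_{H_q^{\bigmax}}$ both have leading term $0$. Conjugating by $\varphi$ does not change leading terms, so $\mu_{\H\cat C_q}$ has leading term $\mu_{\H\cat C}$ and $\H\cat C_q$ is an infinitesimally curved $A_\infty$-deformation of $\H\cat C$.

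The same degeneration shows that the leading term of $F_q$ is precisely the classical tree-built functor $\H\cat C\to\cat C$ of the classical Kadeishvili theorem, whose $1$-ary component is the inclusion $H\embeds\cat C$ and so induces an isomorphism on cohomology; thus the leading term of $F_q$ is a quasi-isomorphism of $A_\infty$-categories and $F_q$ is a quasi-isomorphism of deformed $A_\infty$-categories by \autoref{def:curved-functors-quasiequivalence}. Finally, $\H\cat C$ is minimal by the classical Kadeishvili theorem, so by \autoref{th:kadeishvili-existence-equiv} the pair $(\H\cat C_q,F_q)$ exhibits $\H\cat C_q$ as a minimal model for $\cat C_q$; if one wants a functor in the direction of \autoref{def:kadeishvili-existence-def}, then using the classical projection $p\colon\cat C\to\H\cat C$ (which satisfies $p\circ(\text{leading term of }F_q)=\id$) together with the pushforward axioms (A3) and (A5) of \autoref{th:curved-axioms-convention} produces a functor $\cat C_q\to\H\cat C_q$ with leading term $p$.

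I expect the only delicate point to be the leading-term analysis of the second paragraph — in particular, confirming that the deformed decomposition together with $h_q$, $\pi_q$ and $\varphi$ genuinely specializes at $q=0$ to the data of the chosen homological splitting. This is what forces $\mu_{\H\cat C_q}$ to deform exactly $\mu_{\H\cat C}$ (rather than some other minimal model) and $F_q$ to deform exactly the classical Kadeishvili quasi-isomorphism, and hence is what makes the assertion ``$\H\cat C_q$ is a minimal model for $\cat C_q$'' meaningful in the precise sense of \autoref{def:kadeishvili-existence-def}.
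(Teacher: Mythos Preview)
Your proposal is correct and follows essentially the same approach as the paper: show that $\mu_{\H\cat C_q}$ and $F_q$ inherit the curved $A_\infty$-relations from \autoref{th:kadeishvili-auxiliary-Hq} via the three-fold composition $\varphi^{-1}$, $F_q^{\bigmax}$, $\Id-r$, and then verify that the leading terms recover the classical Kadeishvili data. Your treatment is in fact more careful than the paper on two points the paper glosses over: the leading-term analysis (the paper simply asserts this is ``easily seen''), and the direction mismatch between the constructed functor $F_q:\H\cat C_q\to\cat C_q$ and the direction required in \autoref{def:kadeishvili-existence-def}, which you resolve via \autoref{th:kadeishvili-existence-equiv} and the axioms.
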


\begin{proof}
It is our task to unwrap all definitions and to apply \autoref{th:kadeishvili-auxiliary-Hq}. The application of the curvature optimization procedure has made $ \cat C_q^{\bigmax} $ a category related to $ \cat C_q $ by the gauge equivalence $ \Id - r: \cat C_q^{\bigmax} → \cat C_q $. Subsequent application of the auxiliary minimal model procedure has given $ H_q^{\bigmax} $ an $ A_∞ $-structure with a functor $ F_q^{\bigmax}: H_q^{\bigmax} → \cat C_q^{\bigmax} $. Pulling back has given a product structure on $ \H\cat C_q $.

A first observation is that $ \H\cat C_q $ satisfies the curved $ A_∞ $-axioms. Indeed, it was merely pulled back form $ H_q^{\bigmax} $ and the product structure on $ H_q^{\bigmax} $ in turn satisfies the $ A_∞ $-axioms due to \autoref{th:kadeishvili-auxiliary-Hq}. The leading term of $ μ_{\H\cat C_q} $ is easily seen to be $ μ_{\H\cat C} $ and hence $ \H\cat C_q $ is a deformation of $ \H\cat C $.

A second observation is that with respect to the three curved $ A_∞ $-structures on $ \H\cat C_q $, $ H_q^{\bigmax} $, $ \cat C_q^{\bigmax} $, $ \cat C_q $, the following three mappings mappings define curved $ A_∞ $-functors:
\begin{equation*}
φ: H_q^{\bigmax} → \H\cat C_q, \qquad F_q^{\bigmax}: H_q^{\bigmax} → \cat C_q^{\bigmax}, \qquad \Id - r: \cat C_q^{\bigmax} → \cat C_q.
\end{equation*}
For $ φ $ and $ \Id - r $, this is the case by definition of pullback/uncurving. For $ F_q^{\bigmax} $, this is the statement of \autoref{th:kadeishvili-auxiliary-Hq}. In summary, $ F_q $ is merely a composition of these three functors:
\begin{equation*}
F_q: \H\cat C_q \xrightarrow{φ^{-1}} H_q^{\bigmax} \xrightarrow{F_q^{\bigmax}} \cat C_q^{\bigmax} \xrightarrow{\Id - r} \cat C_q.
\end{equation*}
We conclude that $ F_q $ itself is a curved $ A_∞ $-functor. Its leading term is the functor $ F: \H\cat C → \cat C $ obtained from the Kadeishvili construction for the non-deformed category $ \cat C $. Since $ F $ is a quasi-isomorphism, we conclude that $ F_q $ is a quasi-isomorphism in the sense of \autoref{def:curved-functors-quasiequivalence}. This settles all claims and proves that $ \H\cat C_q $ is a minimal model for $ \cat C_q $ in the sense of \autoref{def:kadeishvili-existence-def}.
\end{proof}

\subsection{The $ D = 0 $ case}
\label{sec:kadeishvili-caseD}
In this section, we examine the deformed Kadeishvili construction in a special case. The starting point is a curvature-free deformed $ A_∞ $-category $ \cat C_q $ where the deformed differential satisfies $ μ^1_q (H) ⊂ μ^1_q (B \htensor R) $. It turns out that in this case, the differential $ μ^1_{\H\cat C_q} $ on the minimal model vanishes. We re-interpret this case as an instance of the Kadeishvili theorem over base rings.

As we explain in \autoref{th:kadeishvili-caseD-crit}, there are multiple ways of saying $ μ^1_{\cat C_q} (H) ⊂ μ^1_{\cat C_q} (B \htensor R) $. One of them is requiring the operator $ D $ appearing in the description \autoref{th:kadeishvili-deformed-badsplitting} to vanish. We may therefore also call the present assumption the “$ D = 0 $ case”.

\begin{lemma}
\label{th:kadeishvili-caseD-crit}
Let $ \cat C $ be an $ A_∞ $-category and $ \cat C_q $ a deformation. Let $ H ⊕ I ⊕ R $ be a homological splitting of $ \cat C $. Denote by $ D, E, F $ the operators from \autoref{th:kadeishvili-deformed-badsplitting}. If $ \cat C_q $ is curvature-free, then we have $ D^2 = 0 $ and $ F = -ED $ and the following statements are equivalent:
\begin{enumerate}
\item For every $ h ∈ B \htensor H $ there exists an $ ε ∈ B \htensor R $ such that $ μ^1_q (h) = μ^1_q (ε) $.
\item We have $ μ^1_q (H) ⊂ μ^1_q (B \htensor R) $.
\item We have $ D = F = 0 $.
\item We have $ D = 0 $.
\end{enumerate}
In the first statement, the element $ ε $ is necessarily infinitesimal: $ ε ∈ \mathfrak{m} R $. Similarly, the right hand side of the inclusion in $ μ^1_q (H) ⊂ μ^1_q (B \htensor R) $ can be replaced by $ μ^1_q (\mathfrak{m} R) $.
\end{lemma}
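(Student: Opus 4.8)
The plan is to feed the curvature-free hypothesis into the curved $A_\infty$-relations and then read off everything from the block-matrix description of $\mu^1_q$ in \autoref{th:kadeishvili-deformed-badsplitting}. Since $\mu^0_q = 0$, the second curved $A_\infty$-relation collapses to $\mu^1_q \circ \mu^1_q = 0$, so $\mu^1_q$ is a genuine differential on $B \htensor \Hom_{\cat C}$. In particular $\mu^1_q$ vanishes on its own image, hence on the whole middle summand $\mu^1_q(B \htensor R)$ of \eqref{eq:kadeishvili-deformed-badsplitting}, since $\mu^1_q(B \htensor R) \subseteq \Im \mu^1_q$. Thus the middle column of the matrix \eqref{eq:kadeishvili-deformed-matrixshape} is identically zero, and with respect to $(B \htensor H) \oplus \mu^1_q(B \htensor R) \oplus (B \htensor R)$ the differential reduces to
\[
\mu^1_q = \pmat{D & 0 & 0 \\ \mu^1_q E & 0 & \mu^1_q \\ F & 0 & 0},
\]
the nonzero top-right block being the isomorphism \eqref{eq:kadeishvili-deformed-identification}.

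I would then expand $(\mu^1_q)^2 = 0$ block by block. The $(1,1)$-block gives $D^2 = 0$ at once. The $(2,1)$-block gives $\mu^1_q E D + \mu^1_q F = \mu^1_q \circ (ED + F) = 0$, and injectivity of $\mu^1_q$ on $B \htensor R$ (the first assertion of \autoref{th:kadeishvili-deformed-badsplitting}) forces $F = -ED$. No other block contributes new information. This settles the first two claims.

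For the equivalences, the pivotal observation is that for $h \in B \htensor H$ the three-summand decomposition of $\mu^1_q(h)$ reads
\[
\mu^1_q(h) = Dh \;+\; \mu^1_q(Eh) \;+\; Fh, \qquad Dh \in B \htensor H, \quad Eh, Fh \in B \htensor R.
\]
By directness, $\mu^1_q(h) \in \mu^1_q(B \htensor R)$ precisely when $Dh = 0$ and $Fh = 0$, and then the unique preimage inside $B \htensor R$ is $\varepsilon = Eh$. Hence (1) is equivalent to ($D = 0$ and $F = 0$), i.e.\ to (3); and (2) is equivalent to (1) once one notes that $\mu^1_q(H) \subset \mu^1_q(B \htensor R)$ already forces the same inclusion with $H$ replaced by $B \htensor H$, because $\mu^1_q(B \htensor R)$ is a closed $B$-submodule and $\mu^1_q$ is $B$-linear, hence continuous, by \autoref{th:prelim-htensor-autocontinuous}. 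Finally (3) $\Rightarrow$ (4) is trivial and (4) $\Rightarrow$ (3) follows from $F = -ED$.

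For the infinitesimal refinement I would use that the homological splitting gives $\mu^1_{\cat C}\restr{H} = 0$, so $\mu^1_q\restr{B \htensor H}$ has vanishing leading term and therefore so do $D$, $E$ and $F$; being $B$-linear they preserve the $\mathfrak{m}$-adic filtration by \autoref{th:prelim-htensor-autocontinuous}, whence $E(B \htensor H) \subset \mathfrak{m} R$. Together with $\varepsilon = Eh$ this yields $\varepsilon \in \mathfrak{m} R$, and consequently $\mu^1_q(H) \subset \mu^1_q(\mathfrak{m} R)$ whenever the inclusion in (2) holds. The one spot requiring care — not really an obstacle — is the bookkeeping in the first two paragraphs: one must confirm that the middle column is genuinely zero and keep the composition orders straight when expanding $(\mu^1_q)^2$; everything else is routine.
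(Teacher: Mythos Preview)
Your proof is correct and follows essentially the same route as the paper's: reduce to $(\mu^1_q)^2 = 0$ via curvature-freeness, read off $D^2 = 0$ and $F = -ED$ from the block matrix, and observe that the four statements all amount to $D = F = 0$ with $(4) \Rightarrow (3)$ coming from $F = -ED$. Your write-up is in fact more detailed than the paper's terse version (you make the vanishing of the middle column and the identification $\varepsilon = Eh$ explicit); one cosmetic slip: the isomorphism \eqref{eq:kadeishvili-deformed-identification} sits in the middle-right $(2,3)$ entry, not the top-right.
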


\begin{proof}
Thanks to curvature-freeness, the differential $ μ^1_q $ squares to zero. The identities $ D^2 = 0 $ and $ F = -ED $ now follow from evaluating $ (μ^1_q)^2 = 0 $ with respect to the matrix presentation \eqref{eq:kadeishvili-deformed-matrixshape}. The four enumerated statements are all different ways of stating the condition $ D = F = 0 $. The only nontrivial observation is that $ D = 0 $ already implies $ F = 0 $ since $ F = -ED $. For the final infinitesimality observations, note that $ μ^1_q (h) $ is necessarily infinitesimal, since $ μ^1 (H) = 0 $ and $ μ^1_q $ is only an infinitesimal deformation of $ μ^1 $.
\end{proof}

The deformed decomposition of $ \cat C_q $ has very favorable properties if $ \cat C_q $ is curvature-free and satisfies $ D = 0 $:

\begin{lemma}
\label{th:kadeishvili-caseD-matrix}
Let $ \cat C $ be an $ A_∞ $-category and $ \cat C_q $ a deformation. Let $ H ⊕ I ⊕ R $ be a homological splitting of $ \cat C $. Assume $ \cat C_q $ is curvature-free and $ D = 0 $. With respect to the deformed decomposition of $ \cat C_q $, the differential $ μ^1_q $ takes the shape
\begin{equation*}
μ^1_q = \pmat{0 & 0 & 0 \\ 0 & 0 & * \\ 0 & 0 & 0}.
\end{equation*}
\end{lemma}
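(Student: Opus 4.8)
The plan is to verify the asserted matrix shape column by column with respect to the deformed decomposition $H_q \oplus \mu^1_q(B \htensor R) \oplus (B \htensor R)$, using curvature-freeness (which forces $(\mu^1_q)^2 = 0$) together with the facts already available from \autoref{th:kadeishvili-deformed-badsplitting}, \autoref{th:kadeishvili-deformed-splitting} and \autoref{th:kadeishvili-caseD-crit}. First I would note that the hypothesis $D = 0$ already forces $F = 0$: by \autoref{th:kadeishvili-caseD-crit}, curvature-freeness gives $F = -ED$, hence $F = 0$.

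For the first column, the action of $\mu^1_q$ on $H_q$, recall that the general element of $H_q$ is $h - Eh$ with $h \in B \htensor H$. Exactly as in the proof of \autoref{th:kadeishvili-deformed-splitting}, the matrix presentation \eqref{eq:kadeishvili-deformed-matrixshape} gives
\begin{equation*}
\mu^1_q(h - Eh) = Dh + \mu^1_q E h + Fh - \mu^1_q(Eh) = Dh + Fh,
\end{equation*}
which vanishes since $D = F = 0$; so the entire first column is zero. For the third column, the action on $B \htensor R$, the defining property of the middle summand is that $\mu^1_q(B \htensor R)$ is the image of $\mu^1_q$ restricted to $B \htensor R$, so $\mu^1_q$ carries $B \htensor R$ into the middle summand and nowhere else; the resulting entry is the canonical isomorphism \eqref{eq:kadeishvili-deformed-identification}, which is nonzero, hence the single genuine $*$. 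For the middle column, the action on $\mu^1_q(B \htensor R)$, a typical element is $\mu^1_q(r)$ with $r \in B \htensor R$, and $\mu^1_q(\mu^1_q(r)) = 0$ because $(\mu^1_q)^2 = 0$ by curvature-freeness; so the middle column is also zero. Assembling the three columns yields exactly the claimed shape.

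I do not anticipate a real obstacle here: the argument is pure bookkeeping with the three direct summands, and every non-formal ingredient, namely $F = -ED$, $(\mu^1_q)^2 = 0$, and the matrix shape \eqref{eq:kadeishvili-deformed-matrixshape}, is already in hand. The one place needing a moment's care is the first column, where one must use that elements of $H_q$ are the twisted combinations $h - Eh$ rather than bare $h$, and that subtracting $\mu^1_q(Eh)$ is what cancels the off-diagonal $\mu^1_q E$ contribution; but this is literally the computation already carried out in \autoref{th:kadeishvili-deformed-splitting}, so it transfers verbatim.
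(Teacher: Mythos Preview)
Your proposal is correct and follows essentially the same approach as the paper: both argue column by column, using $(\mu^1_q)^2 = 0$ for the middle column, the tautological fact that $\mu^1_q(B \htensor R) \subset \mu^1_q(B \htensor R)$ for the third column, and the computation $\mu^1_q(h - Eh) = Dh + Fh = 0$ for the first column. If anything, you are slightly more explicit than the paper in invoking $F = -ED$ from \autoref{th:kadeishvili-caseD-crit} to obtain $F = 0$, which the paper uses implicitly.
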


\begin{proof}
For the third column, note that $ μ^1_q $ by definition sends $ B \htensor R $ to $ μ^1_q (B \htensor R) $. For the second column, note that $ μ^1_q $ squares to zero. For the first column, pick an element $ h - Eh ∈ H_q $ with $ h ∈ B \htensor H $. Then by the first column of \eqref{eq:kadeishvili-deformed-matrixshape} we have $ μ^1_q (h) = μ^1_q Eh $ and hence $ μ^1_q (h - Eh) = 0 $. This finishes the proof.
\end{proof}

According to \autoref{th:kadeishvili-caseD-matrix}, the deformed decomposition in case $ D = 0 $ has many properties which we expect from a homological splitting. For this reason, we establish the following terminology alias:

\begin{definition}
Let $ \cat C $ be an $ A_∞ $-category and $ \cat C_q $ a deformation. Let $ H ⊕ I ⊕ R $ be a homological splitting of $ \cat C $. Assume $ \cat C_q $ is curvature-free and $ D = 0 $. Then the \emph{deformed homological splitting} of $ \cat C_q $ is the deformed decomposition $ H_q ⊕ μ^1_q (B \htensor R) ⊕ (B \htensor R) $.
\end{definition}

The minimal model $ \H\cat C_q $ has favorable properties in case $ \cat C_q $ is curvature-free and $ D = 0 $. In fact, both curvature $ μ^0_{\H\cat C_q} $ and differential $ μ^1_{\H\cat C_q} $ vanish by construction. The higher products are computed by Kadeishvili trees, putting $ φ^{-1} $ on every leaf, $ h_q μ^{\cat C_q} $ on every internal node and $ φ π_q μ^{\cat C_q} $ on the root. The entire procedure can be summarized as follows:

\begin{corollary}
\label{th:kadeishvili-caseD-th}
Let $ \cat C $ be an $ A_∞ $-category and $ H \oplus I \oplus R $ a homological splitting. Let $ \cat C_q $ be a curvature-free deformation of $ \cat C $ with $ D = 0 $. Then a minimal model $ \H\cat C_q $ is determined by the following procedure:
\begin{enumerate}
\item For every $ h ∈ H $, let $ ε_h ∈ B \htensor R $ such that $ μ^1_{\cat C_q} (h) = μ^1_{\cat C_q} (ε_h) $.
\item Define $ H_q = \{h - ε_h \running h ∈ H\} $.
\item Define $ φ: H_q → B \htensor H $ by $ h - ε_h ↦ h $.
\item Calculate the deformed codifferential $ h_q : H_q \oplus μ^1_{\cat C_q} (B \htensor R) \oplus (B \htensor R) → B \htensor R $.
\item Calculate the deformed projection $ π_q : H_q \oplus μ^1_{\cat C_q} (B \htensor R) \oplus (B \htensor R) → H_q $.
\item Set $ μ^1_{\H\cat C_q} = μ^0_{\H\cat C_q} = 0 $.
\item Regard arbitrary Kadeishvili tree shapes $ T $.
\item Decorate $ T $ with $ φ^{-1} $ at the leaves, $ h_q μ_{\cat C_q} $ at the internal nodes and $ π_q μ_{\cat C_q} $ at the root.
\item Define $ μ^{\geq 2}_{\H\cat C_q} (h_k, …, h_1) $ as sum over the result of these trees, with sign $ (-1)^{N_T} $.
\end{enumerate}
\end{corollary}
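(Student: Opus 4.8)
The plan is to deduce \autoref{th:kadeishvili-caseD-th} directly from the general deformed Kadeishvili theorem \autoref{th:kadeishvili-general-th} by tracking how the two hypotheses ``curvature-free'' and ``$D=0$'' collapse the construction of \autoref{def:kadeishvili-general-def}. No new ingredient is needed; the work is to match the simplified recipe against steps~1--9.

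First I would dispose of the curvature optimization step. Since $\cat C_q$ is curvature-free, $μ^0_{\cat C_q}=0$, which certainly lies in $H_q$, so $\cat C_q$ already has optimal curvature. Running the curvature optimization procedure then gives $r^{(0)}=h_q(μ^0_{\cat C_q})=0$ and, inductively, $r^{(i)}=0$ for all $i$, so the total gauge is $r=0$, the limit category is $\cat C_q^{\bigmax}=\cat C_q$, we have $H_q^{\bigmax}=H_q$, $F_q^{\bigmax}=F_q$, and the uncurving functor $\Id-r$ is the identity. By \eqref{eq:kadeishvili-general-def} the minimal model reduces to $μ_{\H\cat C_q}=φ\circ μ_{H_q}\circ φ^{-1}$ with $F_q=F_q^{\bigmax}\circ φ^{-1}$, and the relevant splitting is the deformed decomposition $H_q⊕μ^1_q(B\htensor R)⊕(B\htensor R)$ of $\cat C_q$ itself. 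Using the first column of the matrix \eqref{eq:kadeishvili-deformed-matrixshape} together with $D=F=0$ from \autoref{th:kadeishvili-caseD-crit}, the operator $E$ is exactly the assignment $h↦ε_h$ of step~1 (uniqueness of $ε_h$ coming from injectivity of $μ^1_q$ on $B\htensor R$), so $H_q=\{h-ε_h\mid h∈H\}$ and $φ$ is the map of step~3; and $h_q$, $π_q$ are by definition the deformed codifferential and projection of \autoref{def:kadeishvili-deformed-counterpart}, which is steps~4--5.

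Next I would verify step~6. By construction $μ^0_{H_q}=μ^0_{\cat C_q}=0$ and $μ^1_{H_q}=π_q μ^1_q\restr{H_q}$; but \autoref{th:kadeishvili-caseD-matrix} says that, in the deformed decomposition, $μ^1_q$ kills $H_q$ (the first column vanishes), so $μ^1_{H_q}=0$. Conjugating by the linear isomorphism $φ$ yields $μ^0_{\H\cat C_q}=0$ and $μ^1_{\H\cat C_q}=0$, so $\H\cat C_q$ is a genuinely minimal $A_∞$-category; by \autoref{th:kadeishvili-general-th} it is in particular a minimal model for $\cat C_q$, with quasi-isomorphism $F_q$.

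Finally I would unwind the tree formula for steps~7--9. Because $φ$ and $φ^{-1}$ carry only a linear component, the composite $φ\circ μ_{H_q}\circ φ^{-1}$ acts on an $n$-tuple by applying $φ^{-1}$ to each entry, evaluating $μ^n_{H_q}$, and applying $φ$ to the output; inserting $μ^{n≥2}_{H_q}(h_n,…,h_1)=\sum_{T∈\mathcal{T}_n}(-1)^{N_T}\Res_q(T,h_1,…,h_n)$ together with the decoration convention for $\Res_q$ (inputs at the leaves, $h_q μ_q$ at internal nodes, $π_q μ_q$ at the root) produces exactly the stated procedure: decorate a Kadeishvili tree shape $T$ with $φ^{-1}$ at the leaves, $h_q μ_{\cat C_q}$ at the internal nodes and $φ π_q μ_{\cat C_q}$ at the root (the final $φ$ is what lands the output back in $B\htensor H$), and sum over $T∈\mathcal{T}_n$ with sign $(-1)^{N_T}$. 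The entire argument is bookkeeping; the only places with genuine content are the collapse of the optimization step (using $μ^0=0$) and the vanishing of $μ^1_{\H\cat C_q}$ (using \autoref{th:kadeishvili-caseD-matrix}), so I do not expect a real obstacle — the delicate part is merely to state precisely the two identifications ``$E=(h↦ε_h)$'' and ``$φ$-conjugation $=$ relabelled trees''.
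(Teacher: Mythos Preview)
Your proposal is correct and follows essentially the same approach as the paper. The paper does not give a separate proof of this corollary; it merely remarks in the preceding paragraph that under these hypotheses ``both curvature $μ^0_{\H\cat C_q}$ and differential $μ^1_{\H\cat C_q}$ vanish by construction'' and that the higher products are computed by Kadeishvili trees with $φ^{-1}$ at the leaves, $h_q μ_{\cat C_q}$ at internal nodes and $φ π_q μ_{\cat C_q}$ at the root --- your argument is precisely the detailed unwinding of this sketch, correctly observing that the curvature optimization procedure is trivial when $μ^0_{\cat C_q}=0$, that $E$ coincides with $h\mapsto ε_h$, and that \autoref{th:kadeishvili-caseD-matrix} kills $μ^1_{H_q}$.
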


We would like to provide an aftermath to this corollary. More precisely, we will offer an independent explanation of the condition $ D = 0 $. There is namely a classical Kadeishvili theorem that works for $ A_∞ $-categories defined over rings: Let $ S $ be a ring and $ \cat C $ an $ S $-linear $ A_∞ $-algebra. If the cohomology $ \H(A) $ is a projective $ S $-module, then the projection $ \Ker(μ^1_A) \projects \H(A) $ has a lift $ \H(A) → A $ which is an $ S $-linear quasi-isomorphism of complexes. The original construction of Kadeishvili builds noncanonically a minimal $ A_∞ $-structure on $ \H(A) $ together with an $ A_∞ $-quasi-isomorphism $ \H(A) → A $. This version of the Kadeishvili theorem can be found for instance in \cite{Petersen}.

It is natural to apply this Kadeishvili theorem to curvature-free deformed $ A_∞ $-categories. In fact, if $ \cat C_q $ is a curvature-free deformed $ A_∞ $-category, then the classical Kadeishvili theorem gives a minimal model under the condition that $ \H\Hom_{\cat C_q} (X, Y) $ are projective $ B $-modules for every $ X, Y ∈ \cat C $:

\begin{center}
\begin{tikzpicture}
\path (0, 0) node[align=center] {$ μ^0_{\cat C_q} = 0 $ and \\ $ \H\Hom_{\cat C_q} (X, Y) $ projective $ B $-modules} (4, 0) node {\Large $ \Longrightarrow $} (8, 0) node[align=center] {Classical Kadeishvili \\ applies to $ \cat C_q $};
\end{tikzpicture}
\end{center}

In \autoref{th:kadeishvili-deformed-coh} we show that curvature-freeness together with $ D = 0 $ implies the projectivity condition. In particular, we recover \autoref{th:kadeishvili-caseD-th} as a consequence of the classical Kadeishvili theorem, under the technical assumption that $ \H\Hom_{\cat C} (X, Y) $ is finite-dimensional for all $ X, Y ∈ \cat C $. We have opted for hiding the quantification by $ X, Y ∈ \cat C $ in some cases and making it explicit in other cases.

\begin{lemma}
\label{th:kadeishvili-deformed-coh}
Let $ \cat C $ be an $ A_∞ $-category and $ \cat C_q $ a curvature-free deformation. Choose a homological splitting $ H ⊕ I ⊕ R $. Denote by $ D, E, F $ the operators from \autoref{th:kadeishvili-deformed-badsplitting}. Then we have a natural quasi-isomorphism of $ B $-modules
\begin{equation}
\label{eq:kadeishvili-deformed-coh}
\H\big(\Hom_{\cat C_q}, μ^1_q\big) ~\cong~ \H\big(B \htensor H, D\big).
\end{equation}
In particular if $ D = 0 $ and $ H(X, Y) $ is finite-dimensional, then $ \H(\Hom_{\cat C_q} (X, Y), μ^1_{\cat C_q}) $ is a projective $ B $-module.
\end{lemma}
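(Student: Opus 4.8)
The plan is to realize the deformed complex $(\Hom_{\cat C_q}, μ^1_q) = (B \htensor \Hom_{\cat C}, μ^1_q)$ as a direct sum of two subcomplexes, one acyclic and the other isomorphic to $(B \htensor H, D)$, using the deformed decomposition $B \htensor \Hom_{\cat C} = H_q ⊕ μ^1_q(B \htensor R) ⊕ (B \htensor R)$ of \autoref{th:kadeishvili-deformed-splitting}. No analytic input is required: this decomposition is a genuine $B$-module direct sum, so everything below is algebraic bookkeeping with the block form of $μ^1_q$.

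First I would record the consequences of curvature-freeness. Since $μ^0_q = 0$, the curved $A_∞$-relations force $μ^1_q$ to square to zero, and \autoref{th:kadeishvili-caseD-crit} additionally furnishes $D^2 = 0$ and $F = -ED$, so that $(B \htensor H, D)$ is a genuine complex. Next I would check that $K ≔ μ^1_q(B \htensor R) ⊕ (B \htensor R)$ is an acyclic subcomplex: it is $μ^1_q$-stable because $μ^1_q$ annihilates $μ^1_q(B \htensor R)$ (as $μ^1_q$ squares to zero) and maps $B \htensor R$ into $μ^1_q(B \htensor R)$ by construction, and it is acyclic because $μ^1_q$ restricts to a bijection $B \htensor R \isoto μ^1_q(B \htensor R)$ by \eqref{eq:kadeishvili-deformed-identification} while killing $μ^1_q(B \htensor R)$, so the cycles and the boundaries of $K$ both equal $μ^1_q(B \htensor R)$.

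Then I would show that $H_q$ is also a subcomplex and identify its induced differential. For $h ∈ B \htensor H$ the computation from the proof of \autoref{th:kadeishvili-deformed-splitting} gives $μ^1_q(h - Eh) = Dh + Fh$; writing $Dh = (Dh - E(Dh)) + E(Dh)$ with $Dh - E(Dh) ∈ H_q$ and $E(Dh) ∈ B \htensor R$, the $(B \htensor R)$-component of $μ^1_q(h - Eh)$ is $E(Dh) + Fh$, which vanishes because $F = -ED$, while its $μ^1_q(B \htensor R)$-component is zero; hence $μ^1_q(h - Eh) = Dh - E(Dh) ∈ H_q$. Under the isomorphism $φ \colon H_q \isoto B \htensor H$ of \autoref{def:kadeishvili-deformed-counterpart} this element corresponds to $Dh$, so $H_q$ is $μ^1_q$-stable and $φ$ is an isomorphism of complexes from $(H_q, μ^1_q\restr{H_q})$ to $(B \htensor H, D)$. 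Therefore $(B \htensor \Hom_{\cat C}, μ^1_q) = (H_q, μ^1_q\restr{H_q}) ⊕ (K, μ^1_q\restr{K})$ as complexes of $B$-modules, and the projection onto the $H_q$-summand composed with $φ$ is a natural quasi-isomorphism $(\Hom_{\cat C_q}, μ^1_q) → (B \htensor H, D)$ since $K$ is acyclic; passing to homology gives $\H(\Hom_{\cat C_q}, μ^1_q) \cong \H(B \htensor H, D)$. For the last claim, if $D = 0$ then the right-hand side is simply $B \htensor H(X, Y)$, and when $H(X, Y)$ is finite-dimensional this coincides with $B \otimes_{\C} H(X, Y)$, a free $B$-module of rank $\dim_{\C} H(X, Y)$, hence projective.

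I expect the $μ^1_q$-stability of $H_q$ to be the only genuinely delicate point, since this is exactly where curvature-freeness is used, through $F = -ED$ together with the vanishing of $(μ^1_q)^2$. Without curvature-freeness one only has $μ^1_q(H_q) ⊂ H_q ⊕ (B \htensor R)$, and the clean direct-sum splitting would have to be replaced by a homological-perturbation-style cancellation argument; here, though, every step beyond that is merely tracking the three summands of \eqref{eq:kadeishvili-deformed-splitting}.
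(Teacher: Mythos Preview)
Your proof is correct and takes a genuinely different route from the paper. The paper works with the coarser decomposition $(B \htensor H) \oplus μ^1_q(B \htensor R) \oplus (B \htensor R)$ of \autoref{th:kadeishvili-deformed-badsplitting} and builds explicit chain maps $φ: (h, μ^1_q(r'), r) \mapsto h$ and $ψ: h \mapsto (h, 0, -Eh)$ in both directions, checking by hand that they commute with the differentials and that $ψφ - \id$ lands in the image of $μ^1_q$. By contrast, you pass to the finer decomposition $H_q \oplus μ^1_q(B \htensor R) \oplus (B \htensor R)$ of \autoref{th:kadeishvili-deformed-splitting} and use the identity $F = -ED$ to show that $H_q$ is actually a $μ^1_q$-subcomplex, so the whole complex splits as a direct sum of $(H_q, μ^1_q\restr{H_q}) \cong (B \htensor H, D)$ and the acyclic piece $K$. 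Your argument is structurally stronger: it produces an honest direct-sum splitting of complexes rather than merely a pair of quasi-inverse maps, and it makes the role of curvature-freeness (via $F = -ED$) more transparent. The paper's approach has the minor advantage of avoiding the observation that $H_q$ is closed under $μ^1_q$, which is perhaps why it opts for the less refined decomposition.
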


\begin{proof}
To equate the two cohomology modules, we provide explicit morphisms $ φ, ψ $ of chain complexes in both directions. Next, we check that the maps actually commute with the differential. We finally show that in cohomology, both compositions $ φψ $ and $ ψφ $ descend to the identity.

Our first step is to give explicit morphisms of chain complexes. In terms of the decomposition \eqref{eq:kadeishvili-deformed-badsplitting} of $ \Hom_{\cat C_q} $ into $ B \htensor H ⊕ μ^1_q (B \htensor R) ⊕ B \htensor R $, put
\begin{align*}
φ: (\Hom_{\cat C_q} , μ^1_q) &\longrightarrow (B \htensor H, D), \\
(h, μ^1_q (r'), r) &\longmapsto h, \qquad \text{for} \quad h ∈ B \htensor H, \quad r, r' ∈ B \htensor R, \\
ψ: (B \htensor H, D) &\longrightarrow (\Hom_{\cat C_q} , μ^1_q), \\
h &\longmapsto (h, 0, -Eh), \qquad \text{for} \quad h ∈ B \htensor H.
\end{align*}
We are now ready to check that both $ φ $ and $ ψ $ are chain maps. Indeed, we have
\begin{equation*}
φ(μ^1_q (h, μ^1_q (r'), r)) = φ(Dh, μ^1_q (Eh) + μ^1_q (r), Fh) = Dh = D φ(h, μ^1_q (r'), r)
\end{equation*}
and
\begin{equation*}
ψ(Dh) = (Dh, 0, -EDh) = (Dh, 0, Fh) = μ^1_q (h, 0, -Eh) = μ^1_q (ψ(h)).
\end{equation*}
In the above calculations, we have written elements of the direct sum $ B \htensor H ⊕ μ^1_q (B \htensor R) ⊕ B \htensor R $ as tuples instead of sums.

The next step is to calculate $ φψ $ and $ ψφ $ and verify that they descend to the identity on cohomology. For $ φψ $, this is trivial since $ φψ = \id $. For the other composition $ ψφ $, we pick an element $ x ∈ \Ker(μ^1_q) $ and check whether $ ψ(φ(x)) - x $ lies in the image of $ μ^1_q $. First of all, write $ x = (h, μ^1_q (r'), r) $ and note that $ x ∈ \Ker(μ^1_q) $ implies $ Dh = 0 $, $ r + Eh = 0 $ and $ Fh = 0 $. We obtain
\begin{equation*}
ψ(φ(x)) - x = ψ(h, μ^1_q (r'), r) - (h, μ^1_q (r'), r) = (h, 0, -Eh) - (h, μ^1_q (r'), r) = (0, μ^1_q (r'), 0).
\end{equation*}
The expression on the right is simply the image $ μ^1_q (0, 0, r') $. We conclude that $ φ $ and $ ψ $ are quasi-inverse to each other. This establishes the desired quasi-isomorphism \eqref{eq:kadeishvili-deformed-coh}. In case $ D = 0 $ and $ H(X, Y) $ is finite-dimensional, the cohomology is simply $ B \tensor H(X, Y) $ which is projective. This finishes the proof.
\end{proof}


With the help of \autoref{th:kadeishvili-deformed-coh}, we can also reformulate the condition $ D = 0 $ to a more intuitive statement. Let us distinguish between \emph{true} and \emph{actual cohomology}. By true cohomology of $ \cat C_q $, we mean the flat tensor products $ B \htensor \H\Hom_{\cat C} (X, Y) $. It only depends on the non-deformed category itself. By actual cohomology, we mean the directly observed cohomology $ \H(\Hom_{\cat C_q}, μ^1_{\cat C_q}) $. The two cohomologies typically differ:

\begin{center}
\begin{tabular}{@{}ccc@{}}
\morearraystretch
& \textbf{Original} & \textbf{Deformed} \\\hline
\textbf{Differential} & $ ℂ \overset{0}{→} ℂ $ & $ ℂ⟦q⟧ \overset{q}{→} ℂ⟦q⟧ $ \\
\textbf{Cohomology} & $ ℂ[0] ⊕ ℂ[1] $ & \parbox[m]{0.3\linewidth}{\begin{center} (actual) $ 0[0] ⊕ \frac{ℂ⟦q⟧}{(q)}[1] $ \\[0.5em] (true) $ ℂ⟦q⟧[0] ⊕ ℂ⟦q⟧[1] $ \end{center}}
\end{tabular}
\end{center} 

\autoref{th:kadeishvili-deformed-coh} quantifies the difference between true and actual cohomology. True and actual cohomology of $ \cat C_q $ are equal if $ D = 0 $ and fail to be canonically equal if $ D \neq 0 $. Among curvature-free deformations, we can summarize our observations without any claim to rigor very roughly as follows:

\begin{center}
\begin{tikzpicture}
\path (0, 0) node[align=center] {Classical Kadeishvili applies to $ \cat C_q $} (4, 0) node {\Large $ \Longleftrightarrow $} (8, 0) node[align=center] {$ \H\Hom_{\cat C_q} (X, Y) $ projective $ B $-modules};
\begin{scope}[shift={(0, -0.75)}]
\path (-4, 0) node {\Large $ \Longleftrightarrow $} (0, 0) node {true cohomology = actual cohomology} (4, 0) node {\Large $ \Longleftrightarrow $} (6, 0) node {$ D = 0 $} (8, 0) node {\Large $ \Longleftrightarrow $} (10, 0) node {$ μ^1_{\H\cat C_q} = 0 $.};
\end{scope}
\end{tikzpicture}
\end{center}

\printbibliography

\end{document}